\documentclass[preprint,12pt]{elsarticle}
\usepackage{amsfonts,fixltx2e,amsmath,graphicx,amssymb,amsthm}
\usepackage{mathrsfs}
\usepackage{srcltx}
\usepackage{graphicx}
\usepackage{times}
\usepackage{amsfonts}
\usepackage{amsthm}
\usepackage{amsmath}
\usepackage{epic}
\usepackage{times}
\usepackage{multirow,array}
\usepackage{ulem}
\usepackage[margin=3cm]{geometry}
\linespread{1.1}
\usepackage{fullpage}

\usepackage{tikz}
\usetikzlibrary{snakes}
\usepackage{soul}

\date {}

\newtheorem{thm}{Theorem}
\newtheorem{theorem}{Theorem}[section]
\newtheorem{lemma}[theorem]{Lemma}
\newtheorem{corollary}[theorem]{Corollary}
\newtheorem{proposition}[theorem]{Proposition}
\newtheorem{example}[theorem]{Example}
\newtheorem{claim}{Claim}[theorem]
\theoremstyle{definition}
\newtheorem{definition}[theorem]{Definition}
\newtheorem{question}{Question}
\newtheorem{remark}[theorem]{Remark}


\newcommand{\nats}{{\mathbb N}}
\newcommand{\ints}{{\mathbb Z}}

\newcommand{\card}{{\rm Card}}
\def\A{\mathbb{A}}

\newcommand{\Fac}{{\rm Fac}}
\def\W{\mathcal{W}}
\def\L{\mathcal{L}}
\def\R{\mathcal{R}}

\newcommand{\todo}[1]{\marginpar{\tiny #1}}

\begin{document}

\begin{frontmatter}

\title{Cost and dimension of words of zero topological entropy}

\author[label1]{Julien Cassaigne}
  \ead{julien.cassaigne@math.cnrs.fr}

  \author[label2]{Anna E. Frid}
  \ead{anna.e.frid@gmail.com}

\author[label3]{Svetlana Puzynina\fnref{label5}}
  \ead{s.puzynina@gmail.com}

   \author[label4]{Luca Q. Zamboni}
  \ead{zamboni@math.univ-lyon1.fr}

\address[label1]{CNRS, Institut de Math\'ematiques de Marseille, France}
\address[label2]{Aix-Marseille Universit\'e, Institut de Math\'ematiques de Marseille, France}
\address[label3]{LIP, ENS de Lyon, Universit\'e de Lyon, France and Sobolev Institute of
Mathematics, Novosibirsk, Russia}
\address[label4]{Institut Camille Jordan, Universit\'e Lyon 1, France}
\fntext[label5]{Supported by the LABEX MILYON (ANR-10-LABX-0070) of Universit\'e de Lyon,
within the program ÒInvestissements d'AvenirÓ (ANR-11-IDEX-0007)
operated by the French National Research Agency (ANR).}



\begin{abstract}
Let $\A^*$ denote the free monoid generated by a finite nonempty set $\A.$ In this paper we introduce a new measure of complexity of languages $L\subseteq \A^*$ defined in terms of the semigroup structure on $\A^*.$  For each  $L\subseteq \A^*,$ we define its {\it cost} $c(L)$ as the infimum of all real numbers $\alpha$ for which there exist a language $S\subseteq \A^*$ with $p_S(n)=O(n^\alpha)$ and a positive integer $k$ with  $L\subseteq S^k.$ We also define the {\it cost dimension} $d_c(L)$ as the infimum of the set of all  positive integers $k$ such that $L\subseteq S^k$ for some language $S$ 
with $p_S(n)=O(n^{c(L)}).$ We are primarily interested in languages $L$ given by the set of  factors of an infinite word $x=x_0x_1x_2\cdots \in \A^\nats$ of zero topological entropy, in which case $c(L)<+\infty.$ 
We establish the following  characterisation of  words of linear factor complexity: Let $x\in \A^\nats$ and $L=\Fac(x)$ be the set of factors of $x.$ Then $p_x(n)=\Theta(n)$ if and only $c(L)=0$ and $d_c(L)=2.$    In other words, $p_x(n)=O(n)$ if and only if $\Fac(x)\subseteq S^2$ for some language $S\subseteq \A^+$ of bounded complexity (meaning $\limsup p_S(n)<+\infty).$ 
In general the cost of a language $L$ reflects deeply the underlying combinatorial structure  induced by the semigroup structure on $\A^*.$ For example, in contrast to the above characterisation of languages generated by words of sub-linear complexity, there exist non factorial languages $L$ of  complexity $p_L(n)=O(\log n)$ (and hence of cost equal to $0)$ and of cost dimension $+\infty.$ 
In this paper we investigate the cost and cost dimension of languages defined by infinite words of zero topological entropy.  
We establish the existence of  words of cost zero and finite cost dimension having arbitrarily high polynomial complexity. In contrast we also show that for each $\alpha >2$ there exist infinite words $x$ of positive cost and of complexity $p_x(n)=O(n^\alpha).$   \end{abstract}

\begin{keyword}Symbolic dynamics,  complexity.
\MSC[2010] 37B10
\end{keyword}
\journal{}

\end{frontmatter}

\section{Introduction}

Let $\A$ be a finite non-empty set. For each infinite word
$x=x_0x_1x_2\cdots \in\A^\nats$, the complexity or {\it factor
complexity} $p_x(n)$  counts the number of distinct blocks
$x_ix_{i+1}\cdots x_{i+n-1}\in \A^n$ of length $n$  occurring in
$x$.  In other words, the complexity of $x$ is taken to be the
complexity of the language of its factors
$\Fac(x)=\{x_ix_{i+1}\cdots x_{j}\,|\, 0\leq i\leq j\}$. First
introduced by Hedlund and Morse in their seminal 1938 paper
\cite{MoHe1} under the name of {\it block growth},\footnote{In
\cite{elr}, Ehrenfeucht, Lee, and Rozenberg adopted the term
\it{subword complexity.}} the factor complexity provides a
useful measure of the extent of randomness of  $x$ and more
generally of the subshift it generates.  Periodic words have
bounded factor complexity while digit expansions of normal numbers
have maximal complexity. A celebrated theorem of Morse and Hedlund
in \cite{MoHe1} states that every aperiodic (meaning non-ultimately periodic) word contains at
least $n+1$ distinct factors of each length $n.$ 
 Results on the complexity of words are generally one of  two kinds: Either they provide conditions or formulae for the complexity of a given family of words, for instance Pansiot's work in  \cite{pansiot} on the classification of the factor complexities of  morphic words.  Or they give conditions on words, or rules for generating them, subject to specified constraints on their complexity. An example of a deep and difficult  problem of this kind is the so-called $S$-adic conjecture on words of linear complexity (see for instance \cite{leroy} and the references therein).
 
 The set $\A^*$ consisting of all finite words over the alphabet $\A$ is naturally a free monoid  under the operation of concatenation, with the empty word $\varepsilon$ playing the role of the identity.  Thus given a language $L\subseteq \A^*$ (for instance consisting of all factors of some infinite word $x\in \A^\nats)$ one may ask whether $L$ is contained in a finite product of the form $S^k$ where $S$ is a language of strictly lower complexity. Consider for example the {\it Thue-Morse infinite word}  \[x=011010011001011010010\cdots \]
where for each $n\geq 0,$ the $n'$th term $x_n$ is defined as the sum modulo $2$ of the digits  in the binary expansion of
$n$. The origins of this word date back to the beginning of the last century with the works of  A. Thue \cite{Th1, Th2} in which he proves amongst other things that $x$ is {\it overlap-free} i.e., contains no word of the form $uuu'$ where $u'$ is a non-empty prefix of $u.$ It is well known that $x$ is also a fixed point of the substitution $\varphi: 0\to 01, 1 \to 10$. The factor complexity of the Thue-Morse word, first computed by Brlek \cite{Br} and independently by de Luca and  Varricchio \cite{dLV}, is given by $ p_x(1)=2, p_x(2)=4$ and for $n\geq 3$
\[p_x(n)=\begin{cases} 6\cdot 2^{r-1}+4q\,\,\,\,\,\,\,\,\,\,\,\,0< q\leq 2^{r-1}\\ 
2^{r+2}+2q\,\,\,\,\,\,\,\,\,\,\,\,\,\,\,\,\,\,\,2^{r-1}<q\leq2^r
\end{cases}\]
where $r$ and $q$ are uniquely determined by the equation $n=2^r+q+1,$ $r\geq 0$ and $0< q \leq2^r.$  
For each $n\geq 0$, let $t_n=\varphi^n(0)$ and $\overline{t}_n=\varphi^n(1)$. Then both $t_n$ and  $\overline{t}_n$ are factors of $x$ of length $2^n$.
Let $S\subseteq \{0,1\}^*$ be the set consisting of all  prefixes and suffixes (including $\varepsilon$) of $t_n$ and $\overline{t}_n$ for each $n\geq 0$. Since $t_{n+1}=\varphi^{n+1}(0)=\varphi^n(01)=t_n\overline{t}_n$ and similarly $\overline{t}_{n+1}=\overline{t}_nt_n$,  it follows that  $S$ contains at most $4$ words of each length $n$. We claim that $\Fac(x)\subseteq S^2.$
To see this, let $u\in \Fac(x)$. Since $S$ contains $\varepsilon, 0$ and $1$, we may suppose $|u|\geq 2$. Consider the least $n\geq 0$ such that $u$ is a factor of $t_{n+1}$ or a factor of $\overline{t}_{n+1}$.  If $u$ is a factor of $t_{n+1}$, by minimality of $n$ we can write $u=vw$ where $v$ is a non empty suffix of $t_n$ and $w$ a non empty prefix of $\overline{t}_n$. Whence $u\in S^2$. A similar argument applies in case $u$ is a factor of $\overline{t}_{n+1}.$ Thus while $\Fac(x)$ is of linear complexity, it is contained in a product $S^2$ where $S$ is a language of bounded complexity. With some care, this construction may be generalized to fixed points of arbitrary (primitive) substitutions $\tau: \A\rightarrow \A^+$  by letting $S$ be the collection of all prefixes and suffixes of all $\tau^n(u)$ $(n\geq 0)$ where $u$ ranges over all factors of $\tau(a)$ for each  $a\in \A$.  

As another example, let $L=\Fac(x)$ denote the set of  factors of a Sturmian word $x\in \{0,1\}^\nats$ (for instance we may take $x= 0100101001001010010\cdots$ the Fibonacci word defined as the fixed point of the substitution $0 \mapsto 01, 1\mapsto 0).$   We recall that Sturmian words are infinite words having exactly $n+1$ distinct factors of each length $n\geq 1.$ In view of the Morse-Hendlund theorem, Sturmian words are those aperiodic words of minimal factor complexity. They arise naturally in various branches of mathematics  including combinatorics, algebra, number theory, ergodic theory, dynamical systems and differential equations. In theoretical physics, Sturmian words constitute $1$-dimensional models for quasi-crystals, and in theoretical computer science they are used in computer graphics as digital approximation of straight lines.  
The condition $p_x(n)=n+1$ implies that $x$ admits a unique left (right, respectively) special factor of each length $n$ denoted $l_x(n)$  ($r_x(n),$ respectively). In other words, $l_x(n)$ ($r_x(n),$ respectively) occurs in $x$ preceded (followed, respectively) by both $0$ and $1.$ See for instance Chapter 2 of \cite{Lo}.  Set $S=\{\varepsilon\}\cup\{r_x(n)0\,|\, n\geq 0\}\cup\{1l_x(n)\,|\, n\geq 0\}.$ Then $S$ consists of precisely  $2$ words of each given length $n\geq 1.$  One can prove that $\Fac(x) \subseteq S^2$ (see Example~\ref{e:st}).    It turns out that this is optimal in the sense that if $x$ is an infinite word and $\Fac(x) \subseteq S^2$ for some language $S$ with $\limsup p_S(n)\leq 1,$ then $x$ is ultimately periodic (see \cite{Za}). \\

The above examples are only special cases of the following more general result proved herein: 

\begin{thm}\label{Prim} An infinite word $x\in \A^\nats$ is of sub-linear complexity (i.e., $p_x(n)=O(n))$ if and only if $\Fac(x)\subseteq S^2$ for some language $S\subseteq \A^*$ of bounded complexity (i.e., $\limsup p_S(n)<+\infty).$
\end{thm}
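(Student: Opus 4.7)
I plan to prove the theorem by treating the two implications separately. For the easy direction $(\Leftarrow)$, I would use a direct counting argument: if $\Fac(x)\subseteq S^2$ with $\limsup p_S(n)\leq C$, then every factor of length $n$ decomposes as $uv$ with $u,v\in S$ and $|u|+|v|=n$, yielding
\[
p_x(n)\leq\sum_{k=0}^{n}p_S(k)\,p_S(n-k)\leq(n+1)C^{2}=O(n).
\]

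For the hard direction $(\Rightarrow)$, I would first invoke Cassaigne's theorem that $p_x(n)=O(n)$ is equivalent to the first difference $s_x(n):=p_x(n+1)-p_x(n)$ being bounded by some constant $K$. A direct consequence is that for every $n$ the number of right-special factors of length $n$ (those admitting at least two right-extensions inside $\Fac(x)$) is at most $K$, and analogously for left-special factors. I then propose to let $S$ consist of $\varepsilon$, every letter of $\A$ occurring in $x$, every one-letter right-extension $ua\in\Fac(x)$ of a right-special factor $u$, and every one-letter left-extension $bv\in\Fac(x)$ of a left-special factor $v$. Summing right-extensions of right-special factors of length $n-1$ gives at most $s_x(n-1)$ plus the number of such right-specials, hence at most $2K$, and the symmetric estimate holds on the left, so $p_S(n)=O(1)$.

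The crucial step is to show $\Fac(x)\subseteq S^2$. For each $w=a_1\cdots a_n\in\Fac(x)$, I would locate an index $k\in\{0,1,\ldots,n\}$ making both $u=a_1\cdots a_k$ and $v=a_{k+1}\cdots a_n$ lie in $S$. Unpacking the definition, $u\in S$ holds as soon as $k\leq 1$, or $a_1\cdots a_{k-1}$ is right-special, or $a_2\cdots a_k$ is left-special, and symmetrically for $v$. My plan is to consider the smallest $k^*$ for which the tail $a_{k^*+2}\cdots a_n$ is left-special (which always exists in the aperiodic case, since $\varepsilon$ is left-special when at least two letters occur in $x$), so that $v=a_{k^*+1}\cdots a_n$ lies in $S$ as a left-extension of a left-special factor; the complementary prefix $u$ is then matched using the right-special structure of the prefixes of $w$, adjusting the split to a nearby position when necessary. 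The main obstacle is to handle configurations in which no single special position produces a valid split simultaneously on both sides; this will require a detailed case analysis combining the right-special prefixes of $w$ with its left-special suffixes, and possibly enlarging $S$ slightly to absorb edge cases. Finally, the ultimately periodic case is trivial: $p_x(n)$ is then eventually bounded, so $S=\Fac(x)$ itself has bounded complexity and satisfies $\Fac(x)\subseteq S\cdot\{\varepsilon\}\subseteq S^2$.
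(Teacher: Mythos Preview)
Your easy direction $(\Leftarrow)$ is correct and is essentially the paper's Proposition~\ref{l1prelim} specialized to $\alpha=0$, $k=2$.

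The hard direction, however, has a genuine gap that you yourself flag. You define $S$ as the union of $\varepsilon$, the letters, the one-letter right-extensions of right-special factors, and the one-letter left-extensions of left-special factors, and you correctly note (via Cassaigne's theorem) that $p_S(n)=O(1)$. But the inclusion $\Fac(x)\subseteq S^2$ is never established: you locate a $k^*$ making the suffix lie in $S$, and then say the prefix ``is matched using the right-special structure \dots\ adjusting the split to a nearby position when necessary'', followed by ``possibly enlarging $S$ slightly to absorb edge cases''. That is precisely the heart of the matter, and it is not clear it can be done. For a general $w$, there is no reason a position $k$ exists at which \emph{both} the prefix and the suffix simultaneously satisfy one of your four membership conditions; the right-special prefixes of $w$ and the left-special suffixes of $w$ need not interlace in any controlled way, and ``enlarging $S$ slightly'' while keeping $p_S$ bounded is exactly the difficulty.

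The paper's proof takes a substantially different route. It does not try to cut at a single special position. Instead it builds a \emph{marker set} $M$ (here $M=\R_y$, the right-special factors, which is a $(C{+}1)$-marker set by a pigeonhole argument), and for each factor $v$ it selects a marker $\mathfrak m\in M$ of \emph{maximal dyadic length} $2^k$ occurring in $v$, then cuts $v$ at the \emph{midpoint} of a carefully chosen occurrence of $\mathfrak m$ (preferring extremal occurrences over internal ones, where ``internal/extremal'' is defined via virtual squares and local periods). The maximality of $k$ forces $2^k$ to be comparable to $|v|$, so only $O(1)$ orders $k$ are relevant; Cassaigne's bound limits the number of markers of each order; and the internal/extremal dichotomy is what bounds, for each fixed marker $\mathfrak m$, the number of distinct pieces it can produce. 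This last point is the key mechanism you are missing: without it, the same marker could be cut at many relative positions and generate unboundedly many pieces. Your proposed $S$ has no analogue of this control, which is why the ``detailed case analysis'' you anticipate does not obviously terminate.
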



It turns out that Theorem~\ref{Prim} is very specific to languages defined by infinite words. In fact, there exist non-factorial languages $L$ of complexity $p_L(n)=O(\log n)$ which are not contained in any finite product of the form $S^k$ where $S$ is a language of bounded complexity and $k$ a positive integer.

Our aim here is to express and study these ideas in greater generality. Given a language $L$ of  low complexity, meaning $\limsup_{n\rightarrow \infty} \frac{\log p_L(n)}{n}=0,$   we define the {\it cost} of $L,$ denoted $c(L),$ as the infimum of all real numbers $\alpha$ for which there exist a language $S$ with $p_S(n)=O(n^\alpha)$ and a positive integer $k$ such that $L\subseteq S^k.$ More precisely, for each real number $\alpha \in [0,+\infty)$, we define the $\alpha$-{\it dimension} $d_\alpha (L) $ by
\[d_\alpha(L)=\inf \{k\geq 1\,|\, L\subseteq S^k\,\,\mbox{for some language}\,\, S\in \L(\alpha)\},\]
where $\L(\alpha)$ denotes the collection of all languages
$L\subseteq \A^*$ (over some finite non empty alphabet $\A)$ whose
complexity $p_L(n)=O(n^\alpha)$. 
If $x$ is an infinite word and $L=\Fac(x),$ then, by the Morse-Hedlund theorem, $L$ belongs to $\L(0)$ if and only if $x$ is ultimately periodic. While if $x$ is a Sturmian word or if $x$ is generated by a primitive substitution, then $L$ belongs to $\L(1).$
Then the  cost $c(L)$  is given by
\[c(L)=\inf \{\alpha \in [0,+\infty)\,|\, d_\alpha(L) <+\infty\}.\]
In each case above we take the convention that $\inf \emptyset =+\infty$. If $c=c(L)<+\infty$, then we call $d_c(L)\in \{1,2,3,\ldots \}\cup \{\infty\}$  the {\it cost dimension} of $L$. In case $L=\Fac(x)$ for some infinite word $x,$ then we write $c (x)$ ($d_c(x),$ respectively) in lieu of $c(L)$ ($d_c(L),$ respectively). Thus, the Morse-Hedlund theorem states that an infinite word $x\in \A^\nats$ is ultimately periodic if and only if $c(x)=0$ and $d_0(x)=1,$ i.e., $x$ is of cost equal to $0$ and cost dimension equal to $1.$ Similarly, Theorem~\ref{Prim} asserts that $x$ is of linear complexity (i.e., $p_x(n)=\Theta(n))$ if and only if $x$ is of cost equal to $0$ and cost dimension equal to $2.$
The above definitions  may be adapted to other measures of complexity as we  do herein for  the so-called {\it accumulative complexity} $p^*_L(n)$ which counts the number of words in $L$ of length less than or equal to $n$.



\medskip
\noindent 


A fundamental question, to which a substantial portion of the paper is devoted, is to what extent does the complexity of a language 
determine its cost and cost dimension and vice versa.  A first basic observation is that languages $L$ of positive entropy have cost equal to $+\infty.$ For this reason we restrict our attention to languages and words of zero topological entropy. Via a straightforward  counting argument, it is shown that for each $\alpha \geq 0,$ if $d_\alpha(L)=k$ for some $1\leq k<+\infty$, then $L\in \L(k(\alpha +1)-1).$
It follows from this that   $c(L)$ is finite  if and only if the complexity of $L$ is bounded above by a polynomial.
We further show by direct construction that  for each positive integer $k\geq 1$ there exists an infinite word $x$ of complexity $p_x(n) \in \Omega(n^{k-1})$ with $d_0(x)=k.$   In 
other words, we establish the  existence of words of cost zero and of arbitrarily high polynomial complexity.  

Conversely, given the complexity of a language, what can be said of its cost and cost dimension. We already mentioned two results in this direction: first the obvious fact that for languages $L$ of bounded complexity we have $d_0(L)=1.$ Second, that if $L$ is the set of factors of an aperiodic infinite word, then $L$ is of linear complexity if and only if its cost $c(L)=0$ and its cost dimension $d_0(L)=2.$  However in general, the cost and cost dimension of a given language depend only in part on its complexity.  In fact, both reflect  deeply the underlying combinatorial  structure of the language.  For instance, we already mentioned that non-factorial languages are in general very far from satisfying any result along the lines of Theorem~\ref{Prim}. 
But even in the case of languages defined by infinite words, the characterisation of Theorem~\ref{Prim} does not seem to extend nicely to higher complexities. For instance, we prove that the word $x=\prod_{i=1}^{\infty} ab^i=ababbabbb\cdots$
generated by the (non-primitive) substitution $a \mapsto ab, b \mapsto b, c \mapsto ca$, considered by Pansiot in \cite{pansiot} and of complexity $p_x(n)=\Theta(n^2),$ verifies $d_0(u)>3.$ On the other hand we also show that $d_0(x)\leq 6$ which in particular implies is of cost zero. We do not know whether there exist words of sub-quadratic complexity and positive cost.  However,  we  prove  that  for every real number $\alpha \in (0,1)$ there exists an infinite word $x$ with complexity $p_x(n)\in O(n^{2+\alpha})$ and  cost $c(x)\geq \alpha.$ In other words,  there exist words of positive cost having relatively low (sub-cubic) complexity. This should be contrasted with 
the result mentioned earlier on the existence of words of arbitrarily high polynomial complexity having cost equal to zero.
These results suggest that the cost of a word measures something  beyond its factor complexity which makes it of independent interest. 

The paper is structured as follows: In \S2 we briefly recall some of the basic terminology and notions arising in the study of infinite words. For a more detailed exposition, the reader is referred to one of the standard texts in combinatorics on words such as the Lothaire books \cite{LoComb, Lo, LoApp}.  Also in \S2, for the sake of clarity and self-containment, we develop in detail some notions which are less mainstream in the area of combinatorics on words and yet relevant in what follows, in particular used in the proofs of the main results. They include the notions of internal and extremal occurrences of factors in both finite and infinite words which are defined in terms of virtual occurrences and local periods.   In \S3 we define the key notions of cost and cost dimension of a language in the context of  the factor complexity as well as  the accumulative complexity. Also in this section we establish various fundamental results linking the cost of a language to its complexity and relations between the cost $c(L)$  defined in terms of the factor complexity and the cost $c^*(L)$ defined in terms of the accumulative complexity.  In \S4 we  study the cost and cost dimension of words of sub-linear complexity. 
We begin \S4 by introducing the notions of {\it marker words} and {\it marker sets} which are both new and may be of independent interest. Marker sets defined by right special factors constitute the key tool needed to split each factor of an infinite word of linear complexity into two pieces. 
This decomposition enables us to obtain what we regard to be the main result of the paper (see Theorem~\ref{c:main}), and which gives a complete characterisation of words of linear complexity in terms of cost and cost dimension: An infinite word $x$ is of linear complexity, i.e., $p_x(n)=\Theta(n)$ if and only if the cost $c(x)=0$ and the cost dimension $d_0(x)=2$.  Theorem~\ref{c:main}  is actually a consequence of a more general result given by Theorem~\ref{t:main} combined with an earlier result of the first author which gives a uniform bound on the number of right special factors of each length $n$ of an infinite word word of linear complexity.    
In \S5 we study the cost and cost dimension of words of sub-quadratic complexity. We begin the section with another consequence of Theorem~\ref{t:main} which yields a non-trivial bound on the cost of words $x$ of complexity $p_x(n)=O(n^\alpha)$ for $\alpha \in (1,2).$
We estimate the cost complexity of the fixed point $x$ of the substitution $a \mapsto ab, b \mapsto b, c \mapsto ca$ which is known to have quadratic complexity and prove that $4\leq d_0(x)\leq 6.$ In particular this shows that the result of Theorem~\ref{c:main} already breaks down for words of quadratic complexity.  In \S6 we investigate the cost and cost dimension of words of greater than quadratic complexity and prove that every real number $\alpha \in (0,1)$ there exists an infinite word $x$ with complexity $p_x(n)\in O(n^{2+\alpha})$ and  cost $c(x)\geq \alpha$ (see Corollary~\ref{c:alpha}). Finally in \S7 we exhibit an example of a non-factorial language $L$ of complexity $p_L(n)=O(\log n)$ (and hence of cost zero) having infinite cost dimension i.e.,  $d_0(L)=+\infty.$ This is yet another illustration of  how the main result of Theorem~\ref{c:main} depends strongly on the assumption that the language $L$ be defined by an infinite word.

\section{Preliminaries}

In this section we briefly recall some basic definitions and notations concerning
finite and infinite words which are relevant to the subsequent
sections. For more details we refer the reader to \cite{Lo}. We also introduce the new notions of internal and extremal occurrences of factors in finite and infinite words which are defined by their virtual occurrences and local periods. 

Let
$\A$ be a finite non-empty set (the {\it alphabet}). Let $\A^*$
denote the set of all finite  words $u=u_0u_1\cdots u_{n-1}$ with
$u_i\in \A$.   We call $n$ the {\it length} of $u$ and denote it
$|u|$. The empty word is denoted $\varepsilon$ and by convention
$|\varepsilon|=0$. We put $\A^+=\A^*\setminus \{\varepsilon\}$.
For each $u\in \A^*$ and $a\in \A$, we let $|u|_a$ denote the
number of occurrences of $a$ in $u$. For $u=u_0u_1\cdots u_{n-1}\in \A^+$ we define  
\[\Fac(u)=\{u_i\cdots u_{j}:\, 0\leq i\leq j \leq n-1\}\cup \{\varepsilon\}.\]
A subset $L\subseteq \A^*$ is called a {\it language}. A language $L$ is said to be {\it factorial} if $\Fac(u)\subseteq L$ for each $u\in L$.  Given a  language $L\subseteq \A^*$, we define its complexity $p_L:\nats \rightarrow \nats$ by
\[p_L(n)=\card (L\cap \A^n)\]
and its accumulative complexity $p^*_L:\nats \rightarrow \nats$ by

\[p^*_{L}(n)=\sum_{i=0}^{n} p_{L}(i).\]

Let $\A^\nats$  denote the set of all right infinite words $x=x_0x_1x_2\cdots$  with $x_i\in \A$.
Given $x=x_0x_1x_2\cdots \in \A^*\cup \A^\nats$   let $\Fac(x)=\{x_i\cdots x_{i+n}:\, i,n \geq 0\}\cup \{\varepsilon\}$ denote the set of factors if $x.$  We will frequently use the notation $x[i,j] $ for $x_i\cdots x_j$.  A factor $u$ of $x$ is called {\it right} (resp., {\it left}) {\it special} if $ua,ub \in \Fac(x)$
(resp., $au,bu \in\Fac(x)$) for distinct letters $a,b
\in \A$.  Let $p_x:\nats \rightarrow \nats$ (resp., $p^*_x:\nats \rightarrow \nats)$ denote the {\it factor complexity} (resp., {\it accumulative factor complexity}) of $x$ defined by:
 \[p_x(n)=\card (\Fac(x)\cap \A^n)\]
 and
 \[p^*_{x}(n)=\sum_{i=0}^{n} p_{x}(i).\]
 We say $x\in \A^\nats$ (resp., $L\subseteq \A^*)$ is of {\it bounded complexity} if there exists a positive integer $C$ such that $p_x(n)\leq C$ (resp., $p_L(n)\leq C)$ for all $n\in \nats$.
An infinite word $x$ is called {\it ultimately periodic}, or {\it ultimately} $|v|$-{\it  periodic},  if $x=uvvv\cdots = uv^{\omega}$ for some non-empty words $u,v\in \A^*$. An infinite word is said to be {\it aperiodic} if it is not ultimately periodic.
It follows that every aperiodic word contains a right and a left special factor of each length.
An infinite word $x$ is said to be {\it recurrent} if each prefix of $x$ occurs infinitely often in $x.$ 

Analogously we can consider  bi-infinite words indexed by $\ints$. The definitions above extend in the obvious ways. In particular, a bi-infinite word $x$ is said to be eventually periodic if it is eventually periodic to both the left and the right, i.e., if $x$ admits a prefix of the form $\cdots uuu$ and a suffix of the form $vvv\cdots $ for some $u,v\in \A^+.$ Otherwise $x$ is said to be aperiodic.

 \begin{definition} Let $u=u_1u_2\cdots u_n$ and $v$ belong to $ \A^+$ and fix $1\leq i\leq n.$  We say there there is a {\it virtual occurrence} of $v$ in $u$ beginning (ending, respectively) at position $i$ if the shorter of $v$ and $u_i\cdots u_n$ ($u_1\cdots u_{i-1}, $ respectively) is a prefix (suffix, respectively) of the other. That is $v\A^*\cap u[i,n]\A^* \neq \emptyset$ $(\A^*v \cap \A^*u[1,i-1]\neq \emptyset, $ respectively). 
 \end{definition} 
 
 \begin{definition} For $u=u_1u_2\cdots u_n$ and $1\leq i\leq n,$ we say that $u$ has a {\it virtual square} 
 centered at  position $i$ if there exists a word $v\in \A^+$ (the {\it witness}) and a virtual occurrence of $v$ in $u$ both beginning and ending at position $i.$
 \end{definition}
 
 For example, the word $u=00101101$ has a virtual square of length $2$ at position $i=3$ (witnessed by $v=01)$ as well as a virtual square of length $3$ at position $i=7$ (witnessed by $v=110.)$ 
 
The above definitions extend in the obvious way to define a virtual occurrence of a word $v\in \A^+$ beginning or ending at a position $i\geq 0$ in an infinite word $x=x_0x_1\cdots .$  In this way we can talk about virtual squares occurring in an infinite word. 
For instance, the word $x=0100101001001010010\cdots$ has
virtual squares of length $2$ and $3$ at position $1$, and of
lengths $3$ and $5$ at position $2$.

\begin{definition} For $v=v_1 v_2 \cdots v_n \in \A^+. $ Define the (least) {\it period} of $v,$ denoted $\pi(v),$ to be the least positive integer $m$ such that $v_i=u_{i+m}$ for all $1\leq i\leq n-m$. 
\end{definition} 

For instance, for $v=00110$ we have $\pi(v)=4$ while for $v=00101101$ we have $\pi(v)=8=|v|.$  
Clearly in general $\pi(v)\leq |v|$.

Let $x\in \A^+\cup \A^\nats$ be a finite or
infinite word, and let $v\in \A^+$ be a word occurring in
$x$ at a position $i\geq 0,$ meaning $v=x[i,i+n-1]$. We say that the
occurrence of $v$ at position $i$ is {\it internal} if $x$ has a
virtual square of length $\pi(v)$ centered at positions $i$ and $i+n.$  An occurrence of $v$ in $x$ which is not internal is called
{\it extremal}. More precisely, an extremal occurrence is called
{\it initial} if $x$ does not have a virtual square of length
$\pi(v)$ centered at position $i$, and {\it final} if $x$ does not have
virtual square of length $\pi(v)$ at position $i+n$. For instance,
if $x=01001010100\cdots$, then the occurrence of $v=010$ at
position $0$ is not initial since $x$ has a virtual square of
length $2=\pi(v)$  centered at position $0$. Instead this
occurrence is final (even if it is immediately followed by another
occurrence of $v)$  since $x$ does not a virtual square of length
$2$ centered at position $3$. On the other hand, the occurrence of
$v$ at position $3$ is initial since $x$ does not have a virtual
square of length $2=\pi(v)$ centered at position $3$. In contrast,
the occurrence of $v$ in position $5$ is internal. Note that an
occurrence of a word $v$  in $x$ can be both initial and final. We
also note that if  $x$ is aperiodic, then each factor $v$ of $x$
admits a final occurrence in $x$.

Throughout the paper we make use of the usual Landau notations $O, \Omega,\Theta, $ and $o.$ We adopt the following definition of $\Omega$ which is more commonly used in computer science: 
Given functions $f,g:\nats \rightarrow \mathbb R^+ $, we write
\begin{align*}
f(n)=\Omega(g(n)) &\mbox{~~if~~~~} \exists K>0,\, \exists N,\, \forall n\geq N:\, f(n)\geq Kg(n).\\
\end{align*}

 \section{Dimension and cost: definitions, examples and general properties}\label{s:examples}


For each real number $\alpha \in [0,+\infty)$, we denote by $\L(\alpha)$ (resp., $\L^*(\alpha))$ the collection of languages $L\subseteq \A^*$
 (over some finite non empty alphabet $\A)$ with $p_L(n)=O(n^\alpha)$ (resp., $p^*_L(n)=O(n^\alpha))$. Analogously, we denote by
 $\W(\alpha)$ (resp., $\W^*(\alpha))$ the collection of  infinite words $x\in \A^\nats$ (over some finite non empty alphabet $\A)$ such that $\Fac(x)\in \L(\alpha)$ (resp., $\Fac(x)\in \L^*(\alpha))$. The set $\A^*$ is considered as a free monoid, and thus for each $S \subseteq \A^*$ the set $S^k$ is just the set of all concatenations of $k$ elements of $S$.

\begin{definition}\label{new}{\rm Let $L\subseteq \A^*$. For each real number $\alpha \in [0,+\infty)$, we define the $\alpha$-{\it dimension} $d_\alpha (L) $ by
\[d_\alpha(L)=\inf \{k\geq 1\,|\, L\subseteq S^k\,\,\mbox{for some language}\,\, S\in \L(\alpha)\},\]
and the {\it cost} $c(L)$  by
\[c(L)=\inf \{\alpha \in [0,+\infty)\,|\, d_\alpha(L) <+\infty\}.\]
If $c=c(L)<+\infty$, we call $d_c(L)\in [1,+\infty]$ the {\it cost dimension} of $L$.}
\end{definition}


\noindent By convention $\inf \emptyset =+\infty$. 
Definition~\ref{new} extends naturally to infinite words $x\in \A^\nats$  by replacing  $L$ by $\Fac(x)$ so we define accordingly  $d_\alpha (x)$ and $c(x).$ Replacing $\L(\alpha)$ by $\L^*(\alpha)$ we define analogously the  $\alpha$-{\it accumulative dimension} $d^*_\alpha(L)$ and the {\it accumulative cost} $c^*(L)$.

We observe that in our definition of $d_{\alpha}(L)$, we may replace $S^k$ by $S_1\cdots S_k$ for some languages $S_1,\ldots,S_k \in \L(\alpha).$ 
The following lemma is an immediate consequence of the definition:

\begin{lemma}\label{cdim} Suppose $L\in \L(\alpha_0)$ (resp., $L\in \L^*(\alpha_0))$ for some $\alpha_0\geq 0$. Then
$d_\alpha(L)=1$ (resp., $d^*_\alpha(L)=1)$ for each $\alpha\geq \alpha_0$ and hence
$c(L)\leq \alpha_0$ (resp., $c^*(L)\leq \alpha_0)$.
\end{lemma}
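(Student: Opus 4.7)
The plan is to unwind the definitions directly; there is essentially no obstacle here, only bookkeeping.

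First I would observe the elementary monotonicity $\L(\alpha_0)\subseteq \L(\alpha)$ whenever $\alpha\geq \alpha_0$: if $p_L(n)=O(n^{\alpha_0})$ then a fortiori $p_L(n)=O(n^\alpha)$. The analogous inclusion $\L^*(\alpha_0)\subseteq \L^*(\alpha)$ holds by the same remark applied to $p_L^*$.

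Next, for the factor-complexity case, I would take $S:=L$ itself as the witness language. Since $L\in \L(\alpha_0)\subseteq \L(\alpha)$, the language $S$ lies in $\L(\alpha)$, and trivially $L\subseteq S^1=L=S$. Hence $1$ belongs to the set
\[\{k\geq 1\,|\, L\subseteq S^k\,\,\mbox{for some language}\,\, S\in \L(\alpha)\}\]
whose infimum defines $d_\alpha(L)$. As this infimum is taken over positive integers $k\geq 1$, it follows that $d_\alpha(L)=1$ for every $\alpha\geq\alpha_0$.

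From this I conclude the cost bound: $d_{\alpha_0}(L)=1<+\infty$, so $\alpha_0$ belongs to the set $\{\alpha\in[0,+\infty)\,|\, d_\alpha(L)<+\infty\}$, whose infimum is $c(L)$. Thus $c(L)\leq\alpha_0$. The accumulative statement is proved by the same two-line argument with $\L$ replaced by $\L^*$, $p_L$ replaced by $p_L^*$, $d_\alpha$ by $d^*_\alpha$, and $c$ by $c^*$; no new ingredient is needed.
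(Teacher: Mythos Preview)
Your proof is correct and matches the paper's approach: the paper simply states that the lemma is an immediate consequence of the definition and gives no explicit proof, which is exactly what your direct unwinding of the definitions accomplishes.
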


\begin{lemma}\label{up} For each language $L\subseteq \A^*$,  we have $d_0(L)=1$ if and only if $L$ is of bounded complexity. For each infinite word $x\in \A^\nats$, we have $d_0(x)=1$ if and only if $x$ is ultimately periodic.
\end{lemma}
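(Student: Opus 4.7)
The plan is to split the lemma into its two assertions and observe that the second one is essentially a translation of the first via the Morse–Hedlund theorem recalled in the introduction. The first assertion should follow by simply unpacking Definition~\ref{new}; the second then reduces to a standard application of Morse–Hedlund.

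For the language claim, I would argue directly from the definition. Since $\L(0)$ is by definition the collection of languages $L$ with $p_L(n) = O(1)$, i.e., those of bounded complexity, the statement $d_0(L) = 1$ is equivalent to the existence of such an $S$ with $L \subseteq S^1 = S$. One direction is immediate: if such an $S$ exists, then $p_L(n) \leq p_S(n)$ is bounded. The converse is even more immediate: if $L$ itself is of bounded complexity, take $S = L$, which gives $d_0(L) \leq 1$. Note that $d_0(L) < 1$ cannot occur because the infimum in Definition~\ref{new} is taken over integers $k \geq 1$, so the inequality is actually an equality.

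For the word claim, since $d_0(x)$ is defined to be $d_0(\Fac(x))$, the first part yields that $d_0(x) = 1$ is equivalent to $p_x(n)$ being bounded. It remains to invoke the Morse–Hedlund theorem, which asserts that every aperiodic infinite word satisfies $p_x(n) \geq n+1$ for all $n \geq 1$; hence boundedness of $p_x$ forces $x$ to be ultimately periodic. The converse is routine: if $x = uv^{\omega}$ with $u, v \in \A^*$, then any factor of length $n$ either begins at one of the $|u|$ positions inside $u$ or is determined by its starting residue modulo $|v|$ inside the periodic tail, giving $p_x(n) \leq |u| + |v|$ for all $n$.

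The proof is essentially a bookkeeping exercise: the only substantive input is the Morse–Hedlund theorem, which is used as a black box, and I expect no real obstacle. The only convention worth flagging explicitly is that $d_\alpha(L) \geq 1$ by the $k \geq 1$ in the infimum, which removes any worry about distinguishing $d_0(L) = 1$ from a smaller value.
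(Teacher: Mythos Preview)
Your proposal is correct and follows essentially the same approach as the paper: the paper dispatches the first assertion as ``clear from Definition~\ref{new}'' and handles the second via Morse--Hedlund exactly as you do. You have simply spelled out in more detail what the paper leaves implicit, including the explicit bound $p_x(n)\leq |u|+|v|$ for ultimately periodic words.
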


\begin{proof}The first statement is clear from Definition~\ref{new}. As for the second, if $x$ is ultimately periodic, then its complexity is bounded, whence $d_0(x)=1$. Conversely if $d_0(x)=1$, then the complexity of $x$ is bounded, and hence by the Morse-Hedlund theorem $x$ is ultimately periodic. \end{proof}

\begin{example}[Sturmian words]\label{e:st}  {\rm Here we prove that for every Sturmian word $x$ we have $d_0(x)=2.$  To see this, we show that for each Sturmian word $x\in \{0,1\}^\nats$, there exist sets $S,T$ with $p_S(n),p_T(n) \equiv 1$ (for each $n\geq 0)$ such that $\Fac(x)\subseteq ST.$  Combined with Lemma~\ref{up}, this implies that $d_0(x)=2.$ The condition $p_x(n)=n+1$ implies that $x$ admits a unique left (right, respectively) special factor of each length $n$ denoted $l_x(n)$  ($r_x(n),$ respectively).  Moreover, as is well known,  $l_x(n)$ and $r_x(n)$ are reversals of one another.  
Set $S=\{\varepsilon\}\cup\{r_x(n)0\,|\, n\geq 0\}$ and $ T=\{\varepsilon\}\cup\{1l_x(n)\,|\, n\geq 0\}.$  Then clearly, $p_S(n),p_T(n) \equiv 1.$
It remains to show that $\Fac(x)\subseteq ST.$  To this end we recall that for each $n\geq 1$, the word $w(n)=r_x(n-1)01l_x(n-1)$ is a factor of $x$ of length $2n$ (see for instance Exercise 6.1.24 in \cite{Arn}).  We claim that for each $n\geq 1$, $w(n)$ contains $n+1$ distinct factors of length $n$. Assuming for a moment this claim, it follows that each factor of $x$ of length $n$ is a factor of $w(n)$ and hence $\Fac(x)\subseteq ST$ as required. To prove the claim, we proceed by induction on $n$. For $n=1$, we have $w(1)=01$ which contains $2$ factors of length $1$. For the inductive step, let $n\geq 1$, and assume $w(n)$ contains $n+1$ distinct factors of length $n$. We wish to show that $w(n+1)$ contains $n+2$ distinct factors of length $n+1$. Suppose to the contrary that some word $u$ of length $n+1$ occurs twice in $w(n+1)$. We claim $u= r_x(n)0$, for otherwise the word $u'$ obtained by deleting the  last letter of $u$ would occur twice in $w(n)$, a contradiction.
 Similarly, if $u\neq 1l_x(n)$, then the word $u''$ obtained by deleting the first letter of $u$ would occur twice in $w(n)$, a contradiction. Thus $u=r_x(n)1=0l_x(n)$, which is impossible since, as $r_x(n)$ and $l_x(n)$ are reversals of one another, we have that $r_x(n)1$ and $0l_x(n)$ do not contain the same number of $0'$s and $1'$s. } \end{example}

The next proposition illustrates the basic relations between the dimension $d_\alpha$ and the accumulative dimension $d^*_\alpha$. It is stated in terms of languages $L\subseteq \A^*$ but the same inequalities hold for infinite words $x\in \A^\nats$.

\begin{proposition}\label{t:l1l2}For each $\alpha \geq 0$ and language $L\subseteq \A^*$ we have
\begin{enumerate}
\item$d_\alpha (L)\leq d^*_\alpha (L)$,
\item $d^*_{\alpha +1}(L) \leq d_\alpha (L)\leq 2d^*_{\alpha+1}(L)$.
\end{enumerate}
\end{proposition}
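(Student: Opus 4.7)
My plan for (1) is immediate. Since $p_S(n)\le p^*_S(n)$ for every language $S$, we have $\L^*(\alpha)\subseteq\L(\alpha)$, so every $S\in\L^*(\alpha)$ with $L\subseteq S^k$ also witnesses $d_\alpha(L)\le k$. The first inequality of (2) is equally short: if $p_S(n)=O(n^\alpha)$ then $p^*_S(n)=\sum_{i=0}^n p_S(i)=O(n^{\alpha+1})$, so $\L(\alpha)\subseteq\L^*(\alpha+1)$ and any witness for $d_\alpha(L)\le k$ witnesses $d^*_{\alpha+1}(L)\le k$.

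The substance of the proposition is the second inequality. Fix $k=d^*_{\alpha+1}(L)$ and some $S\in\L^*(\alpha+1)$ with $L\subseteq S^k$ and $p^*_S(n)\le Cn^{\alpha+1}$. My plan is to write each $s\in S$ as $s=p_s q_s$ and set $T=\{\varepsilon\}\cup\{p_s:s\in S\}\cup\{q_s:s\in S\}$; then $L\subseteq S^k\subseteq(TT)^k=T^{2k}$, so it remains to arrange the splits so that $p_T(n)=O(n^\alpha)$. A midpoint split $i_s=\lceil|s|/2\rceil$ will not suffice, since it transports any pointwise peak of $p_S$ directly into $p_T$, whereas the hypothesis controls only the average $p^*_S$. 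I will instead insist that $i_s:=|p_s|$ lie in $[|s|/3,2|s|/3]$, so both pieces have length $\Theta(|s|)$, and choose the splits by a counting argument that spreads them over the admissible range.

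The key estimate I would carry out is the following: if each $s$ distributes a uniform fractional weight $3/|s|$ over the admissible prefix-lengths $m\in[|s|/3,2|s|/3]$, then the total weight arriving at a fixed $m$ is
\[
\sum_{s\,:\,3m/2\le|s|\le 3m}\frac{3}{|s|}\;\le\;\frac{2}{m}\,p^*_S(3m)\;\le\;\frac{2C(3m)^{\alpha+1}}{m}\;=\;O(m^\alpha).
\]
Since $[|s|/3,2|s|/3]$ is symmetric around $|s|/2$, the fractional load on the suffix side $|q_s|=m$ admits the same bound. A deterministic integer assignment of splits with loads of the same order $O(m^\alpha)$ on both sides then follows either from $b$-matching and Hall's defect inequality in the capacitated bipartite graph with right-capacities $cm^\alpha$, or from dependent rounding of the fractional solution, or from a direct greedy scan of $S$ in order of increasing length that sends each $s$ to a least-loaded admissible position.

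Once such an assignment is in hand, the number of distinct length-$n$ prefixes in $T$ is bounded by the prefix load at $n$, which is $O(n^\alpha)$, and symmetrically for suffixes, so $p_T(n)=O(n^\alpha)$ and $T\in\L(\alpha)$. This yields $d_\alpha(L)\le 2k=2d^*_{\alpha+1}(L)$, completing (2). The main obstacle I anticipate is the joint two-sided constraint: one must ensure that a single choice of split positions simultaneously bounds both prefix and suffix loads by $O(m^\alpha)$, not merely one side at a time. The fractional averaging already delivers this simultaneous bound in expectation, and any of the three conversion routes listed above produces the required deterministic assignment with only a constant-factor loss.
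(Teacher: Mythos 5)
Parts (1) and the left inequality of (2) are handled exactly as in the paper, and for the substantive right inequality your overall strategy coincides with the paper's: reduce to showing that any $T\in\L^*(\alpha+1)$ satisfies $T\subseteq S^2$ for some $S\in\L(\alpha)$ (the paper's Lemma~\ref{ST}), by splitting each word into two pieces whose lengths are spread out so that no length class gets overloaded. Your fractional load computation is correct and is, in disguise, the same estimate the paper uses: the total capacity available in the window $[\ell/3,2\ell/3]$ (resp.\ $[0,\ell/2]$ in the paper) grows like $\ell^{\alpha+1}$, which dominates the number $p^*_T(\ell)=O(\ell^{\alpha+1})$ of words that compete for it.

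The gap is in the step you yourself flag: converting the fractional bound into a deterministic choice of split points. This is not a routine rounding step, because each split simultaneously consumes one unit of capacity at a prefix-length and one unit at a suffix-length; the associated constraint system has three nonzero entries per variable (one word constraint, two length-capacity constraints), so it is not a bipartite $b$-matching and Hall's theorem, total unimodularity, and off-the-shelf dependent rounding do not apply as stated. Of your three routes only the greedy scan in order of increasing length actually works, and making it work \emph{is} the content of the lemma: one must show the greedy never gets stuck. The paper does this by capping each length class at $\lceil Mn^\alpha\rceil$, declaring a length forbidden once the cap is reached, and observing that if every admissible split of $v_{m+1}$ hit a forbidden length on at least one side, then the forbidden lengths $n_i\in\{i,|v_{m+1}|-i\}$ are pairwise distinct, so $\card(S_m)\geq\sum_i\lceil Mn_i^\alpha\rceil\gtrsim\frac{M}{\alpha+1}(|v_{m+1}|/2)^{\alpha+1}$, which for $M$ large enough exceeds the inductive bound $\card(S_m)\leq 2m\leq 2K|v_m|^{\alpha+1}$ coming from $m\leq p^*_T(|v_m|)$. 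Your fractional estimate contains all the ingredients for this pigeonhole, but as written the proof is incomplete until you carry it out (and note that a ``least-loaded position'' greedy without a hard cap would still require this same analysis to certify the $O(m^\alpha)$ bound).
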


\begin{proof}We begin by showing that $d_\alpha (L)\leq d^*_\alpha (L)$. The result is clear if $d^*_\alpha(L)=+\infty$. Thus assume $d^*_\alpha(L)=k$ for some positive integer $k$. Then $L\subseteq S^k$ for some language $S\in \L^*(\alpha)$. Hence $S\in \L(\alpha)$ whence $d_\alpha(L)\leq k=d^*_\alpha(L)$ as required.
Next we show that $d^*_{\alpha +1}(L) \leq d_\alpha (L)$. Again the result is clear if $d_\alpha(L)=+\infty$, thus we may suppose $d_\alpha(L)=k$ for some positive integer $k$. Then $L\subseteq S^k$ for some language $S\in \L(\alpha)$. In other words, $p_S(n)=O(n^\alpha)$. Thus $p^*_S(n)=O(n^{\alpha+1})$, i.e., $S\in \L^*(\alpha+1)$, and hence $d^*_{\alpha +1}(L)\leq k= d_\alpha(L)$.  In order to prove the remaining inequality, we will need the following lemma:

\begin{lemma}\label{ST}Let $T\subseteq \A^*$. If $T\in \L^*(\alpha +1)$, then $T\subseteq S^2$ for some $S\in \L(\alpha)$.
\end{lemma}

\begin{proof}Since $T\in \L^*(\alpha +1)$, there exists a constant $K>0$ such that $p^*_T(n)\leq Kn^{\alpha +1}$ for each $n\geq 1$. We order $T=\{v_1,v_2,v_3,\ldots\}$ so that $|v_m|\leq |v_{m+1}|$ for each $m\geq 1$.
Thus for each $m\geq 2$ we have
\begin{equation}\label{hoho} m\leq p^*_T(|v_m|)\leq K|v_m|^{\alpha+1}.\end{equation}
(For $m=1$, we may have $v_1=\varepsilon$, and thus the latter inequality will not hold.)

Pick $M$ such that
\[M>\max\{K(\alpha+1)2^{\alpha+2}; 2\}.\]
We  now show that there exists a language $S\subset \A^*$ with
$p_S(n)\leq\lceil Mn^\alpha \rceil$ for each $n\geq 1$, and
$T\subseteq S^2$. To prove this we define inductively a nested
sequence of sets $S_1\subseteq S_2\subseteq S_3 \subseteq \cdots$
with $S_m\subseteq \A^*$ such that for each $m\geq 1$ the
following three conditions are satisfied:

i)  $\card(S_m)\leq 2m$,

ii) $p_{S_m}(n)\leq \lceil Mn^\alpha \rceil$ for each $n\geq 1$,

iii) $\{v_1,v_2,\ldots ,v_m\}\subseteq S_m^2$.

\noindent For $m=1$, we consider the factorization
$v_1=\varepsilon\cdot v_1$ and put $S_1=\{\varepsilon, v_1\}$.
Then clearly $S_1$ satisfies each of the conditions i), ii) and
iii) above. For the inductive step, suppose for $m\geq 1$ we have
constructed sets $S_1\subseteq S_2\subseteq\cdots \subseteq S_m$
with the required properties. We say that $n\geq 1$ is a forbidden
length if $p_{S_m}(n)=\lceil Mn^\alpha \rceil$, i.e., in
constructing $S_{m+1}$ from $S_m$ we cannot add to $S_m$ any word
of forbidden length without violating condition ii) at level
$m+1$. Note that 0 is never a forbidden length since there exists only one word of length 0, $\varepsilon$, and nothing else can be added to the set of words of length 0.

Let $F$ denote the set of all forbidden lengths. For
each $0\leq i \leq |v_{m+1}|$ we can factor $v_{m+1}$ as
$v_{m+1}=x_iy_i$, with $|x_i|=i$. We claim that there exists
$0\leq j \leq \left\lceil \frac{|v_{m+1}|}{2}\right\rceil-1$ such that neither
$|x_j|$ nor $|y_j|$ belongs to $F$.So, we can take
$S_{m+1}=S_m\cup \{x_j,y_j\}$. To prove the claim, suppose to the
contrary that for each $0\leq i\leq \left\lceil
\frac{|v_{m+1}|}{2}\right\rceil-1$ there exists $n_i\in \{i,
|v_{m+1}|-i\}\cap F$. Then summing up the number of elements in
$S_m$ of forbidden lengths we obtain:

\begin{eqnarray*}
\card(S_m)&\geq&\sum _{n\in F} \lceil Mn^\alpha \rceil \geq  \sum_{i=0}^{\left\lceil \frac{|v_{m+1}|}{2}\right\rceil-1} \lceil Mn_i^\alpha \rceil 
\geq \sum_{i=0}^{\left\lceil \frac{|v_{m+1}|}{2}\right\rceil-1} Mn_i^\alpha
\geq  \sum_{i=1}^{\left\lceil \frac{|v_{m+1}|}{2}\right\rceil-1} Mi^\alpha +M |v_{m+1}|^{\alpha}.
\end{eqnarray*}
The latter inequality holds since 0 is never a forbidden length, and thus $n_0=|v_{m+1}|$. 
Continuing the chain of inequalities, we see that
\begin{eqnarray*}
\card(S_m)&\geq &\sum_{i=1}^{\left\lceil \frac{|v_{m+1}|}{2}\right\rceil-1} Mi^\alpha +M |v_{m+1}|^{\alpha} >\sum_{i=1}^{\left\lceil \frac{|v_{m+1}|}{2}\right\rceil} Mi^\alpha \geq
\int_{0}^{\frac{|v_{m+1}|}{2}}Mx^\alpha\,dx\\ 
&\geq & \frac{M}{(\alpha+1)}\left(\frac{|v_{m+1}|}{2}\right)^{\alpha +1}
>\frac{K(\alpha+1)2^{\alpha+2}}{(\alpha+1)}\left(\frac{|v_{m+1}|}{2}\right)^{\alpha +1}\\
&\geq&  2K|v_{m+1}|^{\alpha +1} \geq 2K|v_{m}|^{\alpha +1} 
\geq  2m,
\end{eqnarray*}
where the last inequality follows from \eqref{hoho}),
contradicting i). This completes the inductive step. Having
defined the nested sequence $(S_m)_{m\geq 1}$, we set
$S=\bigcup_{m\geq 1}S_m$. Then $p_S(n)=O(n^\alpha)$ and
$T\subseteq S^2$. \end{proof} 

We now return to the proof of Proposition~\ref{t:l1l2} and establish the remaining inequality
$d_\alpha (L)\leq 2d^*_{\alpha+1}(L).$
Let us assume $d^*_{\alpha+1}(L)=k$ for some positive integer $k$. Then
$L\subseteq T^k$ for some $T\in\L^*(\alpha+1)$. By Lemma~\ref{ST}
there exists $S\in \L(\alpha)$ such that $T\subseteq S^2$. Thus
$L\subseteq S^{2k}$ whence $d_\alpha (L)\leq
2k=2d^*_{\alpha+1}(L)$ as required.\end{proof}

The next statement follows immediately from the second double inequality of Proposition \ref{t:l1l2}.

\begin{corollary}
 For any language $L \subseteq \A^*$,
\begin{enumerate}
 \item if $c(L)>0$, then $c^*(L)=c(L)+1$;
\item if $c(L)=0$, then $0 \leq c^*(L)\leq 1$.
\end{enumerate}

\end{corollary}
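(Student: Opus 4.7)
My plan is to derive both statements directly from the two inequalities in part (2) of Proposition~\ref{t:l1l2}, namely $d^*_{\alpha+1}(L)\leq d_\alpha(L)\leq 2d^*_{\alpha+1}(L)$, by translating each of them into an inequality between the infima $c(L)$ and $c^*(L)$.

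\textbf{Translating each inequality into a cost inequality.} First I would observe that the left inequality $d^*_{\alpha+1}(L)\leq d_\alpha(L)$ means: whenever $d_\alpha(L)<+\infty$ one also has $d^*_{\alpha+1}(L)<+\infty$. Taking the infimum over all such $\alpha$ immediately yields $c^*(L)\leq c(L)+1$. Next, I would apply the right inequality $d_\alpha(L)\leq 2d^*_{\alpha+1}(L)$, which asserts that $d^*_{\alpha+1}(L)<+\infty$ forces $d_\alpha(L)<+\infty$, whenever $\alpha\geq 0$. Writing $\beta=\alpha+1$ and taking the infimum over all $\beta\geq 1$ with $d^*_\beta(L)<+\infty$, one obtains $c(L)\leq \max(c^*(L)-1,0)$; the $\max$ is needed because $\alpha$ is only allowed to range in $[0,+\infty)$ in Definition~\ref{new}.

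\textbf{Assembling the two cases.} With these two bounds in hand the corollary follows by a short case distinction. For case (1), assume $c(L)>0$; then $\max(c^*(L)-1,0)\geq c(L)>0$ forces $c^*(L)\geq 1$, so the inequality $c(L)\leq \max(c^*(L)-1,0)$ reduces to $c(L)\leq c^*(L)-1$, i.e.\ $c^*(L)\geq c(L)+1$, and combined with $c^*(L)\leq c(L)+1$ one gets the claimed equality. For case (2), assume $c(L)=0$; the bound $c^*(L)\leq c(L)+1$ gives $c^*(L)\leq 1$, and the trivial lower bound $c^*(L)\geq 0$ (since the infimum is taken over a subset of $[0,+\infty)$) completes the argument.

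\textbf{Expected difficulty.} There is essentially no obstacle here: the corollary is a purely formal consequence of Proposition~\ref{t:l1l2}. The only subtlety worth flagging in the write-up is the asymmetry coming from $\alpha\geq 0$, which is precisely why case (2) only gives the range $[0,1]$ rather than the sharp value $c^*(L)=c(L)+1$ that one obtains when $c(L)>0$.
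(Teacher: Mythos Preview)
Your proposal is correct and follows precisely the route the paper indicates: the paper merely states that the corollary ``follows immediately from the second double inequality of Proposition~\ref{t:l1l2}'', and what you have written is exactly the unpacking of that remark, including the careful handling of the constraint $\alpha\geq 0$ that produces the $\max(c^*(L)-1,0)$ term and hence the interval $[0,1]$ in case~(2).
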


The next proposition establishes a first relationship between $d_\alpha$   and  complexity:

\begin{proposition}\label{l1prelim}
 Let $\alpha \geq 0$ and $L\subseteq \A^*.$   If $d_\alpha(L)=k$  for some positive integer $k$, then $L\in \L(k(\alpha +1)-1).$  In particular, if $x\in \A^\nats$ and $L=\Fac(x),$ then by taking  $\alpha=0$ we have that if $d_0(x)=k$, then $x\in \W(k-1)$.
\end{proposition}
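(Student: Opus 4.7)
The plan is a direct counting argument. Since $d_\alpha(L)=k$, there is some $S\subseteq\A^*$ with $p_S(n)=O(n^\alpha)$ and $L\subseteq S^k$. So every word $w\in L$ of length $n$ admits at least one factorisation $w=s_1 s_2\cdots s_k$ with each $s_i\in S$, and with lengths $(n_1,\ldots,n_k)$ satisfying $n_1+\cdots+n_k=n$ and $n_i\geq 0$. The strategy is to bound $p_L(n)$ by summing, over all such length profiles, the number of elements of $S^k$ achieving that profile.

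First I would count the profiles. The number of $k$-tuples of non-negative integers summing to $n$ is $\binom{n+k-1}{k-1}$, which is $O(n^{k-1})$ for $k$ fixed. Next, for a fixed profile $(n_1,\ldots,n_k)$, the number of $k$-tuples $(s_1,\ldots,s_k)\in S^k$ with $|s_i|=n_i$ is exactly $\prod_{i=1}^k p_S(n_i)$. Using the hypothesis $p_S(m)\leq C(m+1)^\alpha$ for some constant $C$ (which absorbs the value at $m=0$), each factor is at most $C(n+1)^\alpha$ since $n_i\leq n$, and the product is at most $C^k(n+1)^{k\alpha}=O(n^{k\alpha})$.

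Combining these two estimates gives
\[
p_L(n)\leq \binom{n+k-1}{k-1}\cdot C^k(n+1)^{k\alpha}=O(n^{k-1})\cdot O(n^{k\alpha})=O\!\left(n^{k(\alpha+1)-1}\right),
\]
so $L\in\L(k(\alpha+1)-1)$, which is the claim. The particular case of infinite words follows by taking $L=\Fac(x)$ and $\alpha=0$, giving $p_x(n)=O(n^{k-1})$, i.e.\ $x\in\W(k-1)$.

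There is no real obstacle here; the only mild subtlety is handling $n_i=0$ cleanly (where $p_S(0)=1$ need not be bounded by $C\cdot 0^\alpha=0$), which is dealt with by replacing the bound $Cn^\alpha$ by $C(n+1)^\alpha$, or equivalently by splitting off the empty-word coordinates and noting that they only decrease the number of nontrivial profiles.
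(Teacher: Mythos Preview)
Your proof is correct and follows essentially the same approach as the paper: count the $\binom{n+k-1}{k-1}=O(n^{k-1})$ compositions of $n$ into $k$ non-negative parts, then bound the number of choices from $S$ for each part by $O(n^\alpha)$, giving $p_L(n)=O(n^{k(\alpha+1)-1})$. You are in fact slightly more careful than the paper in handling the case $n_i=0$ via the shift to $C(n+1)^\alpha$.
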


\begin{proof}  It suffices to prove the proposition for languages $L.$ The result is clear in case $k=1.$ So let us fix $k\geq 2,$ and let $L\subseteq S^k$ for some $S\in \L(\alpha).$  Then there exists a positive integer $C$  such that $p_S(n)\leq Cn^\alpha$ for each $n\geq 0.$  Let $u\in L$ and put $n=|u|.$ Then $u$ is a concatenation of $k$ elements of $S.$ 
We claim  there are ${n+k-1\choose k-1}$ ways of factoring $u=v_1v_2\cdots v_{k}$ with $|v_i|\geq 0.$ In fact, each such factorization of $u$ corresponds to a vector $(n_1,n_2,\ldots ,n_{k})$ with $n_i\geq 0$ and $n_1+n_2+\cdots n_{k}=n.$ The mapping $(n_1,n_2,\ldots ,n_{k})\mapsto (n_1+1,n_2+1,\ldots ,n_{k}+1)$ defines a bijection between the sets $A=\{(n_1,n_2,\ldots ,n_{k})\,|\, n_i\geq 0,\,\,n_1+n_2+\cdots n_{k}=n\}$ and  
$B=\{(m_1,m_2,\ldots ,m_{k})\,|\, m_i\geq 1,\,\,m_1+m_2+\cdots m_{k}=n+k\}.$ Since each element of $B$ corresponds to a partition of $n+k$ consecutive points into $k$ non-empty parts, and since each such partition is given by choosing $k-1$ separation points amongst the $n+k-1$ possible separation points, we deduce that $\card(A)=\card(B)=  {n+k-1\choose k-1}.$
Having established that there are ${n+k-1\choose k-1}=O(n^{k-1})$ ways of factoring $u=v_1v_2\cdots v_{k}$ with $|v_i|\geq 0,$ as each $v_i\in S,$ there are $C|v_i|^\alpha$ choices for each $v_i.$ Thus $p_L(n)=O(n^{k(\alpha +1)-1})$ as required.
\end{proof}

\noindent As an immediate consequence we get:

\begin{corollary}\label{l1} For each language $L\subseteq \A^*$ (resp., infinite word $x\in \A^\nats)$ we have
$c(L)<+\infty$ if and only if $L\in \L(\alpha)$ (resp., $x\in \W(\alpha))$ for some $\alpha \geq 0$.
\end{corollary}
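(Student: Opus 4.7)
The plan is to derive both implications directly from the two results already established just above, namely Lemma~\ref{cdim} and Proposition~\ref{l1prelim}. The statement is really an equivalence between ``finite polynomial complexity'' and ``finite cost,'' and each direction corresponds neatly to one of these two tools.

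For the easy direction, I would assume $L\in \L(\alpha)$ for some $\alpha\geq 0$ and invoke Lemma~\ref{cdim}, which gives $d_\alpha(L)=1$ and hence $c(L)\leq \alpha<+\infty$. The same argument applies to an infinite word $x\in \W(\alpha)$ by taking $L=\Fac(x)$.

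For the nontrivial direction, I would assume $c(L)<+\infty$. By the definition of cost as an infimum, there must exist some $\alpha\geq 0$ for which $d_\alpha(L)$ is finite; say $d_\alpha(L)=k$ for some positive integer $k$. Proposition~\ref{l1prelim} then yields immediately $L\in \L(k(\alpha+1)-1)$, so setting $\beta=k(\alpha+1)-1$ produces an explicit exponent with $L\in \L(\beta)$. Again the infinite-word case follows by applying the same reasoning to $\Fac(x)$.

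Since both directions are one-line consequences of earlier results, I do not anticipate any genuine obstacle; the only point worth stating carefully is that the infimum in the definition of $c(L)$ need not be attained, but this is irrelevant because the existence of any $\alpha$ with $d_\alpha(L)<+\infty$ is all we need to invoke Proposition~\ref{l1prelim}.
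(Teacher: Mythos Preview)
Your proposal is correct and follows essentially the same route as the paper: the converse direction via Lemma~\ref{cdim}, and the forward direction by choosing some $\alpha$ with $d_\alpha(L)<+\infty$ and applying Proposition~\ref{l1prelim}. Your observation that the infimum need not be attained (so one simply takes any $\alpha$ in the nonempty set) is exactly the right way to handle it; the paper instead picks $\alpha>c(L)$, which amounts to the same thing.
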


\begin{proof} If $c(L)<+\infty$, then $d_\alpha(L)<+\infty$ for each $\alpha >c(L)$. Fix $\alpha >c(L)$ and a positive integer $k$ such that $d_\alpha(L)=k$. Then by  Proposition~\ref{l1prelim} $L\in \L(k(\alpha +1)-1)$.
The converse follows from Lemma~\ref{cdim}.\end{proof}

\noindent In view of the next corollary, we restrict ourselves henceforth to languages and words of entropy zero.
\begin{corollary}
Languages of positive entropy have cost equal to $+\infty$.
\end{corollary}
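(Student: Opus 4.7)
The statement is a direct corollary of Corollary~\ref{l1}, so my plan is essentially a contrapositive one-liner, which I would flesh out to justify the step from ``positive entropy'' to ``super-polynomial complexity.''

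First I would recall the definition of topological entropy of a language, namely $h(L)=\limsup_{n\to\infty}\frac{\log p_L(n)}{n}$, and note that $h(L)>0$ means there exists $\beta>0$ and an infinite sequence of integers $n_k\to\infty$ with $p_L(n_k)\geq e^{\beta n_k}$. In particular, for every $\alpha\geq 0$, the ratio $p_L(n)/n^\alpha$ is unbounded, so $p_L(n)\neq O(n^\alpha)$. Hence $L\notin \L(\alpha)$ for any $\alpha\geq 0$.

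Next I would invoke Corollary~\ref{l1}: if $c(L)<+\infty$, then $L\in \L(\alpha)$ for some $\alpha\geq 0$. Contrapositively, $L\notin \L(\alpha)$ for every $\alpha\geq 0$ forces $c(L)=+\infty$. Combining this with the observation of the previous paragraph gives the desired conclusion.

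There is no real obstacle here; the only minor subtlety is making the implication ``positive entropy $\Rightarrow$ super-polynomial complexity'' precise, since entropy is defined via a $\limsup$ rather than via a uniform lower bound. But the exponential bound along the subsequence $n_k$ already suffices to defeat any polynomial upper bound $O(n^\alpha)$, and this is all that is needed to apply Corollary~\ref{l1}. The whole argument is therefore a two-line appeal to an already-established result.
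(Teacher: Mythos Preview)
Your proposal is correct and matches the paper's approach: the paper states this corollary immediately after Corollary~\ref{l1} without any further proof, treating it as an immediate consequence, which is exactly what you do by taking the contrapositive. Your added justification that positive entropy forces $p_L(n)/n^\alpha$ to be unbounded along a subsequence (and hence $L\notin\L(\alpha)$ for every $\alpha$) is the natural way to spell out the one step the paper leaves implicit.
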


Proposition~\ref{l1prelim} suggests that a priori there is no polynomial bound on the complexity of infinite words of cost equal to $0$. The following proposition shows that for each $k\geq 1$ there exists a word $x$ of complexity $\Omega(n^{k-1})$
with $d_0(x)=k$ and hence in particular $c(x)=0$.

\begin{proposition} For each $k\geq 1$ there exists a word $x$ of complexity $\Omega(n^{k-1})$ of cost $0$ and cost dimension $k$.
\end{proposition}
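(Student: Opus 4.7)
The plan is to proceed by induction on $k$. The cases $k=1$ and $k=2$ are immediate: any ultimately periodic word works for $k=1$ by Lemma~\ref{up}, and any Sturmian word works for $k=2$ by Example~\ref{e:st} together with the Morse--Hedlund theorem.

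For $k\geq 3$, the natural construction is to build $x_k$ from $x_{k-1}$ over an alphabet enlarged by a fresh marker letter $\#_k$, by interleaving consecutive prefixes of $x_{k-1}$:
\[
x_k \;=\; \#_k\, x_{k-1}[1,1]\, \#_k\, x_{k-1}[1,2]\, \#_k\, x_{k-1}[1,3]\, \#_k\cdots .
\]
I would first establish $p_{x_k}(n)=\Theta(n^{k-1})$ by classifying factors according to the number $d$ of $\#_k$-markers they contain. The dominant contribution comes from $d=1$: such a factor has the form $u\,\#_k\,v$ with $u$ any factor of $x_{k-1}$ and $v$ the prefix of $x_{k-1}$ of complementary length. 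Since $x_{k-1}$ is recurrent, every factor of $x_{k-1}$ arises as a suffix of some prefix, so the count is exactly $\sum_{j=0}^{n-1}p_{x_{k-1}}(j) = p^{*}_{x_{k-1}}(n-1)$, which is $\Theta(n^{k-1})$ by the inductive hypothesis $p_{x_{k-1}}(n)=\Theta(n^{k-2})$. A direct counting shows that factors with $d\geq 2$ contribute only $O(n^2)$, which is absorbed into $O(n^{k-1})$ for $k\geq 3$.

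The lower bound on the cost dimension is then free: the contrapositive of Proposition~\ref{l1prelim} applied to $p_{x_k}(n)=\Omega(n^{k-1})$ immediately gives $d_0(x_k)\geq k$.

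The delicate step is the matching upper bound $d_0(x_k)\leq k$, i.e.\ exhibiting a bounded complexity language $S_k$ with $\Fac(x_k)\subseteq S_k^{\,k}$. A first candidate is $S_k = S_{k-1}\cup\{\varepsilon\}\cup\{\#_k\, x_{k-1}[1,m]\mid m\geq 0\}$, where $S_{k-1}$ witnesses $d_0(x_{k-1})\leq k-1$; the appended family has exactly one word of each length, so $S_k$ remains of bounded complexity. Factors containing no or a single $\#_k$ then split naturally into $k-1$ pieces of $S_{k-1}$ followed by one ``trailing'' piece $\#_k v$, i.e.\ into $k$ pieces of $S_k$. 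The hard case is factors crossing two or more $\#_k$'s, where a naive decomposition produces $k-1+d$ pieces: here one must absorb the whole chain of interior $\#_k$-and-full-prefix segments into a single bounded-complexity element of $S_k$, exploiting the arithmetic rigidity that these interior prefixes are \emph{consecutive} prefixes $x_{k-1}[1,m+1],\ldots,x_{k-1}[1,m+d-1]$ and hence determined by the single parameter $m$.

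Combining the two inequalities yields $d_0(x_k)=k$ and therefore $c(x_k)=0$, completing the induction. I expect the principal obstacle to be precisely the construction and verification of $S_k$ in the multi-marker case: the naive decomposition produces too many pieces, and compressing the interior chain into a single bounded-complexity element forces one to exploit the arithmetic progression of block lengths in a global rather than local manner.
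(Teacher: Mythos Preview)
Your inductive construction is genuinely different from the paper's direct one, and the complexity estimate together with the lower bound $d_0(x_k)\ge k$ via Proposition~\ref{l1prelim} are essentially fine. (A minor point: you invoke recurrence of $x_{k-1}$, but your $x_k$ is never recurrent --- the block $\#_k\,x_{k-1}[1,1]\,\#_k$ occurs exactly once --- so this hypothesis is not maintained by the induction. It is not really needed, however: restricting the $d=1$ count to $|u|\ge n/2$ already yields $\Omega(n^{k-1})$.)

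The genuine gap is the upper bound $d_0(x_k)\le k$. You correctly isolate the multi-marker case $d\ge 2$ as the obstacle, but the proposed fix does not work. The interior chain $\#_k\,x_{k-1}[1,m]\,\#_k\cdots\#_k\,x_{k-1}[1,m+d-2]$ is indeed determined by $m$ once its length is fixed --- but that only shows the set of all such chains has at most \emph{linear} complexity per length (one chain for each feasible $m$), not bounded complexity. Concretely, for every length $\ell$ there are order-$\ell$ admissible values of $m$, each giving a distinct chain. Since the leading fragment $u\in\Fac(x_{k-1})$ already consumes $k-1$ pieces of $S_{k-1}$ in the worst case, only one slot remains for everything from the first $\#_k$ onward, and that tail is a genuinely two-parameter family. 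Cutting at the last $\#_k$ instead of the first runs into the symmetric obstruction. I do not see how to obtain $d_0(x_k)\le k$ for this particular $x_k$; the ``arithmetic rigidity'' you appeal to bounds the complexity of the chain family by a linear function, not by a constant.

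The paper sidesteps this by a direct, non-inductive construction on $\{0,1,\dots,k-2\}$: it sets $w_0=0$, $w_{n+1}=w_n t_{n+1} w_n$ with $t_p\in\{1,\dots,k-2\}^+$, and takes $x=\lim w_n$. Every factor then decomposes as $(\text{suffix of some }w_m)\cdot(\text{factor of some }t_p)\cdot(\text{prefix of }x)$; the middle piece lies in $1^*2^*\cdots(k-2)^*$ and therefore contributes $k-2$ pieces each of complexity~$1$, while the nesting $w_m\,\text{suffix of}\,w_{m+1}$ makes each flanking set of complexity~$1$ as well, for a total of $k$ pieces. The essential difference is that the paper engineers the ``middle'' to sit inside a product of $k-2$ unary languages from the outset, whereas your interior chains carry an extra free parameter and admit no such product decomposition.
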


\begin{proof}
For $k=1$ we may simply take the constant word $x=a^\omega,$ and for $k=2$ it suffices to take $x$ to be any Sturmian word (see Example~\ref{e:st}). Thus we may assume that $k \geq 3.$
We construct a word $x$ on the alphabet $\{0,1,\ldots ,k-2\}$ as follows:  We enumerate 
\[\{1,\ldots ,k-2\}^+=\{t_1,t_2,t_3,\ldots\}\]
where the $t_i$ are listed in increasing order, where $\{1,\ldots ,k-2\}^+$ 
is ordered by $t_i<t_j$ if and only if either $|t_i|<|t_j|$ or in case $|t_i|=|t_j|$ then $t_i$ is less than $t_j$ relative to the lexicographic order. So the sequence $t_1,t_2,\ldots$ looks like  $1,2,\ldots,k-2, 11, 12, \ldots$ Then $x\in \{0,1,\ldots ,k-2\}^\nats$ is defined by

\[x=t_0 t_1 t_0 t_2 t_0 t_1 t_0 t_3...,\]
where $t_0=0.$ In other words $x$ is
obtained as the limit of a sequence $(w_n)$ defined by $w_0=t_0$, $w_{n+1}=w_n t_{n+1} w_n$ for all $n \geq 0$.
We claim that the complexity of $x$ is  $\Omega(n^{k-1})$. Indeed, let us restrict ourselves to factors of $x$ of length $n$ which contain a complete factor $0 t_p 0$, where the length of $t_p$ is at least $n/2$. Such a factor of $x$ exists for each $t_p=1^{j_1} 2^{j_2}... (k-2)^{j_{k-2}}$ (that is, for each $j_1,...,j_{k-2}$ under the condition $j_1+\cdots +j_{k-2}=|t_p|\geq n/2$), and for each starting point of that occurrence of $t_p$, which is any number between 1 and $n-|t_p|-1$. So, we have $k-1$ degrees of freedom, and thus the complexity of $x$ is at least $O(n^{k-1})$.
On the other hand, take a factor $w$ of $x$ and find in it a word $t_p$, where $p$ is maximal possible. Here incomplete intersections count: we just fix an occurrence of $w$ to $x$, see what words $t_p$ it intersects and choose the greatest $p$.
If $t_p$ is completely in $w$, it is followed in it by a prefix of $x$. Denote the set of prefixes of $x$ by $S_{k-1}$. Symmetrically, just before $t_p$ in $w$, if it is taken from the beginning, there is a suffix of some word $w_m$ (and $w_m$ are suffixes one of another). We denote the set of these suffixes by $S_0$. As for $t_p$ itself, it belongs to the concatenation of $1^*=S_1$, $2^*=S_2$, etc.; so,
\begin{equation}\label{e:012}
w \in S_0 S_1 \ldots S_{k-2} S_{k-1},
\end{equation}
where the complexity of each $S_i$ is 1. 

If $t_p$ is not completely contained in $w$, three situations are possible. Either $w=t's$, where $t'$ is a suffix of $t_p$; then $t'\in i^*(i+1)^*\cdots (k-2)^*$ for some $i\in\{1,\ldots,k-2\}$, $s$ is a prefix of $x$, and thus $w\in S_i\cdots S_{k-2} S_{k-1} \subset S_0 S_1 \ldots S_{k-2} S_{k-1}$. Or, symmetrically, $w=pt''$, where $t''$ is a prefix of $t_p$; then $t'' \in 1^*2^*\cdots i^*$ for some $i\in\{1,\ldots,k-2\}$, $p$ is a suffix of some $w_m$, and thus $w\in S_0 S_1 \cdots S_{i} \subset S_0 S_1 \ldots S_{k-2} S_{k-1}$. Or, at last, $w$ is a factor of $t_p$, and then $w\in i^*(i+1)^*\cdots j^*$ for some $i,j\in\{1,\ldots,k-2\}$, $i\leq j$, and thus $w \in S_i S_{i+1}\cdots S_j \subset S_0 S_1 \ldots S_{k-2} S_{k-1}$. In all the cases, \eqref{e:012} holds.\end{proof}

While the definition of $x$ in the previous proposition is on a alphabet size which varies with $k,$ by applying to $x$ the morphism 
 $f: i \to 1^{i}0^{k-i}$ we  obtain an infinite binary word satisfying the same required properties.  
 
 We end this section by noting that the set $S$ in Definition~\ref{new} is not assumed to be factorial. 
In fact, as the following proposition shows, this is too strong of a condition:

\begin{proposition}\label{too strong}Let $x\in \A^\nats.$ Suppose $\Fac(x)\subseteq S^k$ for some factorial language $S$ and positive integer $k.$ Then there exists a suffix $y$ of $x$ such that $\Fac(y)\subseteq S.$ In particular, for each positive integer $\alpha \geq 0,$ if $S\in \L(\alpha)$ then $x\in \W(\alpha).$ 
\end{proposition}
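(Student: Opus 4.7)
The plan is to argue by minimality on $k$, with the auxiliary observation that factoriality is inherited by all the powers $S^k$. So the very first step would be to verify this lemma: if $S$ is factorial, then $S^k$ is factorial for every $k \geq 1$. For a factor $w$ of a product $s_1 s_2 \cdots s_k$, one can decompose $w = s_i'' s_{i+1} \cdots s_{j-1} s_j'$, where $s_i''$ is a suffix of $s_i$ and $s_j'$ a prefix of $s_j$. Factoriality of $S$ puts both end pieces into $S$; since any non-empty factorial language contains $\varepsilon$ (as $\varepsilon \in \Fac(u)$ for every $u$), one can pad with empty factors to realise $w$ in $S^k$.

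Next I would let $k \geq 1$ be the \emph{smallest} positive integer with $\Fac(x) \subseteq S^k$. If $k = 1$ the conclusion is immediate with $y = x$, so assume $k > 1$. By minimality there exists a factor $u$ of $x$ with $u \notin S^{k-1}$. I would then choose $y \in \A^\nats$ so that $uy$ is a suffix of $x$; this fixes the candidate suffix.

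The main step is to show $\Fac(y) \subseteq S$. Since $S$ is factorial, it suffices to show that every prefix $z$ of $y$ lies in $S$. Such a $z$ satisfies $uz \in \Fac(x) \subseteq S^k$, so write $uz = v_1 v_2 \cdots v_k$ with each $v_i \in S$. The key claim is that $v_1 \cdots v_{k-1}$ is a \emph{proper} prefix of $u$: otherwise $u$ would be a prefix — hence a factor — of $v_1 \cdots v_{k-1} \in S^{k-1}$, and the factoriality of $S^{k-1}$ established in the opening step would force $u \in S^{k-1}$, contradicting the choice of $u$. Hence $z$ is a (proper) suffix of $v_k \in S$, and factoriality of $S$ gives $z \in S$.

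For the ``in particular'' clause, the only factors in $\Fac(x) \setminus \Fac(y)$ are those contained in, or straddling, the finite prefix of $x$ preceding $y$, so $p_x(n) \leq p_y(n) + O(1) \leq p_S(n) + O(1) = O(n^\alpha)$, and $x \in \W(\alpha)$. The real content of the argument is the preliminary lemma on factoriality of $S^k$; that is the step that makes the otherwise elementary minimality argument go through, and it is also where one quietly uses $\varepsilon \in S$.
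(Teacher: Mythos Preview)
Your proof is correct and follows essentially the same argument as the paper: the same preliminary observation that $S^k$ is factorial, the same minimality reduction on $k$, the same choice of $u\notin S^{k-1}$ and suffix $y$ with $uy$ a suffix of $x$, and the same prefix argument using factoriality of $S^{k-1}$ to conclude $z\in S$. You spell out in more detail both why $S^k$ is factorial (including the role of $\varepsilon\in S$) and the ``in particular'' clause via the bound $p_x(n)\leq p_y(n)+O(1)$, neither of which the paper makes explicit.
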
  

\begin{proof} We remark that if $S$ is factorial, then so is $S^k$ for each $k\geq 1.$ Let $k\geq1$ be the least positive integer such that $\Fac(x)\subseteq S^k.$ The result is clear in case $k=1,$ so we may suppose $k>1.$ By minimality of $k,$ there exists a factor $u$ of $x$ not belonging to $S^{k-1}.$ 
Pick $y\in \A^\nats$ such that $uy$ is a suffix of $x.$ We claim $\Fac(y)\subseteq S.$ Since $S$ is factorial, it suffices to show that every prefix of $y$ belongs to $S.$ So let $z\in \A^*$ be a prefix of $y.$ Then we can write $uz=v_1v_2\cdots v_{k}$ for some $v_i\in S.$ Since $S^{k-1}$ is factorial and $u\notin S^{k-1},$ it follows that $v_1v_2\cdots v_{k-1}$ is a proper prefix of $u$ and hence $z$ is a proper suffix of $v_k.$ Thus $z\in S$ as required.\end{proof}

\section{A characterisation of words of linear complexity in terms of cost dimension}

In this section we characterize words of linear complexity in
terms of the cost dimension. Let $x\in \A^\nats \cup \A^\ints$.  For each $n\geq 0$, let
$\R_x(n)$ denote the set of  right special factors of $x$ of
length $n$ and  $\R_x=\bigcup_{n\geq 0}\R_x(n)$. 



\begin{definition}\label{markers} \rm{Let $D$ be a positive integer. A subset $M\subseteq \A^*$ is called a $D$-{\it marker set} for $x$ if for each $n\geq 1$ and each factor $u$ of $x$ of length $|u|\geq Dn$ we have $\Fac(u)\cap M \cap \A^n\neq \emptyset$. The elements of $M$ are called $D$-markers. }
\end{definition}

\begin{lemma}\label{rmarkers} Let $C$ be a positive integer. Then for each aperiodic word  $x\in \A^\nats \cup \A^\ints$ with $p_x(n)\leq Cn$ for each $n\geq 1$,  the set $\R_x$ is a $(C+1)$-marker set for $x$.
\end{lemma}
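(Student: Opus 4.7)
I would argue by contradiction: assume there exist $n \ge 1$ and a factor $u$ of $x$ with $|u| \ge (C+1)n$ such that no length-$n$ factor of $u$ is right special in $x$ (i.e., $\Fac(u) \cap \R_x \cap \A^n = \emptyset$), and deduce that $x$ is eventually periodic. Under this hypothesis, every length-$n$ factor $v$ appearing in $u$ satisfies $v \notin \R_x$, hence has a \emph{unique} right extension in $x$. Consequently, the letter $u[i+n]$ is uniquely determined by the window $u[i..i+n-1]$ for each $i \le |u|-n-1$, so $u$ is determined by its prefix of length $n$.

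Next, I apply pigeonhole to the $Cn+1$ windows $u[i..i+n-1]$ with $i = 0, 1, \ldots, Cn$ (all contained in $u$ because $(C+1)n \le |u|$). Since $p_x(n) \le Cn$, two of them coincide: there exist $0 \le i < j \le Cn$ with $u[i..i+n-1] = u[j..j+n-1]$; set $p = j - i \in [1, Cn]$. The unique-extension property propagates this coincidence one letter to the right, so $u[i+n] = u[j+n]$, and by a straightforward induction $u[k] = u[k+p]$ whenever $i \le k \le |u|-1-p$. Thus $u[i..|u|-1]$ is $p$-periodic.

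Now I extend this periodicity into $x$. Let $a$ be a position at which $u$ occurs in $x$. The inequality $|u| \ge (C+1)n$ together with $i + p = j \le Cn$ gives $|u| - n - p \ge i$, so by $p$-periodicity the final window $u[|u|-n..|u|-1]$ equals the earlier window $u[|u|-n-p..|u|-1-p] \subset u[i..|u|-1]$. This common factor has a unique right extension, which forces $x[a+|u|] = u[|u|-p] = x[a+|u|-p]$, extending the period by one position. Iterating this argument (at each step, the length-$n$ window ending at the current position equals, by the already-established period, a length-$n$ factor of $u$, hence is non-right-special and has a forced extension), one obtains that $x[a+i..]$ is $p$-periodic. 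Thus $x$ is ultimately periodic, contradicting aperiodicity in the case $x \in \A^\nats$; the bi-infinite case is handled by applying the same argument to the one-sided right tail of $x$ starting at the chosen occurrence of $u$.

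The main obstacle is purely index bookkeeping: one must verify that $Cn + 1$ length-$n$ windows genuinely fit in a factor of length $(C+1)n$ (giving $p \le Cn$), and that the inequality $|u| \ge n + j$ needed to propagate the period across the right boundary of $u$ is saturated exactly by $|u| \ge (C+1)n$. Once these boundary conditions are checked, the rest is a direct unrolling of the unique-right-extension property.
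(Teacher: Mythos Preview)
Your proof is correct and follows essentially the same strategy as the paper's: both apply the pigeonhole principle to the $Cn+1$ length-$n$ windows inside $u$ to find a repeated factor, then use the absence of right-special factors (hence uniqueness of right extensions) to force eventual periodicity of $x$, contradicting the aperiodicity hypothesis. The paper compresses the extension step into the single line ``every occurrence of $v$ in $x$ is an occurrence of $w$, whence $x$ is ultimately periodic,'' whereas you spell out the index bookkeeping explicitly, but the underlying argument is identical.
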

\begin{proof}Fix a positive integer $n$, and let $u$ be any factor of $x$ of length $(C+1)n$. We show that $u$ contains some element of $\R_x(n)$.  Since $p_x(n)\leq Cn$, and there are $Cn+1$ positions for factors of length $n$ in $u$, by the pigeon-hole principle there exists a factor $v$ of $x$ of length $n$ which occurs in $u$ at least twice. Thus $u$ contains as a factor a word $w$ of length $|w|>n$ which begins and ends in $v$. Hence there exists a prefix $w'$ of $w$ of length $|w'|\geq n$ which is a right special factor of $x$. Otherwise, every occurrence of $v$ in $x$ is an occurrence of $w$, whence $x$ is ultimately periodic, a contradiction.  It follows that the suffix $w''$ of $w'$ of length $n$ belongs to $\R_x(n)$. \end{proof}


\noindent The following proposition gives an alternative and more general method for constructing marker sets whose complexity is related to the complexity of the underlying word:

\begin{proposition}\label{l:4n_over_n}
 For each aperiodic  word $x\in \A^\nats\cup \A^\ints$ there exists a $3$-marker set  $M$ for $x$ with \[p_M(n)\leq \frac{p_x(4n)}n\]
 for each $n\geq 1$.
\end{proposition}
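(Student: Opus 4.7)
For each $n\ge 1$ my plan is to build $M_n \subseteq \Fac(x)\cap\A^n$ as the image of an assignment $g\colon \Fac(x)\cap\A^{4n} \to \Fac(x)\cap\A^n$ that sends each length-$4n$ factor $w$ to one of its ``centered'' length-$n$ sub-factors $w[i,i+n-1]$ with $i\in\{n,n+1,\ldots,2n\}$. Writing $\Phi(w) = \{w[i,i+n-1]: n \le i \le 2n\}$, so that $|\Phi(w)| \le n+1$, the overall marker set will be $M = \bigsqcup_{n\ge 1} M_n$.

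The 3-marker property is essentially free from this construction. Given $u\in\Fac(x)$ with $|u|\ge 3n$, take any length-$3n$ sub-factor of $u$ and extend one of its occurrences by $n$ letters on the right (always possible, since $x$ is infinite) to obtain a length-$4n$ factor $w\in\Fac(x)$ that contains the chosen sub-factor as $w[0,3n-1]$. Since $g(w) = w[i,i+n-1]$ sits at positions $[i,i+n-1] \subseteq [n,3n-1] \subseteq [0,3n-1]$, we have $g(w)\in\Fac(u)\cap M$. The inclusion $[n,3n-1] \subseteq [j,j+3n-1]$ for every $j\in[0,n]$ also shows the argument is insensitive to exactly how the length-$3n$ sub-factor is placed inside $w$.

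The substantial part of the proof is the size bound. I plan to arrange $g$ so that every $v$ in its image is hit by at least $n$ length-$4n$ factors, i.e.\ $|g^{-1}(v)| \ge n$; then
\[ n\,|M_n| \;\le\; \sum_{v\in M_n}|g^{-1}(v)| \;=\; |\Fac(x)\cap\A^{4n}| \;=\; p_x(4n), \]
which gives $|M_n| \le p_x(4n)/n$. To construct such a $g$ I will use the double count $\sum_v |C(v)| = \sum_w |\Phi(w)| \le (n+1)\,p_x(4n)$ where $C(v) = \{w : v\in\Phi(w)\}$, combined with a greedy batching scheme: iteratively identify, among the still-unassigned length-$4n$ factors, a centered candidate $v$ that lies in $\Phi(w)$ for at least $n$ of them, assign $n$ of those $w$'s to $v$, and repeat; any residual block of fewer than $n$ unassigned factors at the end is absorbed into one additional marker, harmless to the bound.

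The main obstacle is justifying each step of the batching: at every stage one must show that among the still-uncovered length-$4n$ factors there is some centered candidate of multiplicity at least $n$. The double count gives the correct average multiplicity, so a pigeonhole argument produces such a candidate initially, but the batching may exhaust high-multiplicity candidates early, leaving $w$'s whose centered sub-factors are all ``spent''. Handling this requires a residual averaging argument, and the aperiodicity of $x$ enters to prevent too many length-$4n$ factors from having $|\Phi(w)|$ strictly less than $n+1$ (which would happen if $w$ were locally periodic). A cleaner alternative would be a probabilistic construction, selecting each $v \in \Fac(x)\cap\A^n$ with a weight proportional to $1/|C(v)|$ and derandomizing with the aid of the same double count; the combinatorial batching has the advantage of being explicit, but the probabilistic version is likely to yield the stated inequality with the least bookkeeping.
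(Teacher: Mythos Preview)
Your 3-marker argument is fine, but the complexity bound---which you rightly identify as the substantial part---has a real gap that you flag without closing.

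The double count $\sum_v |C(v)| = \sum_w |\Phi(w)| \le (n+1)\,p_x(4n)$ is an \emph{upper} bound; pigeonhole from it tells you nothing about the existence of a $v$ with $|C(v)|\ge n$, since you have no control on the number of candidate $v$'s. Even the first batching step is unjustified: to get some $|C(v)|\ge n$ by averaging you would need $\sum_w |\Phi(w)| \ge n\cdot\bigl|\bigcup_w \Phi(w)\bigr|$, and a single $w$ whose middle $2n$-block is periodic already has $|\Phi(w)|=1$. Your remark that aperiodicity should keep most $|\Phi(w)|$ close to $n+1$ is not turned into an inequality, and in any case the statement demands the \emph{exact} bound $|M_n|\le p_x(4n)/n$, not an asymptotic one; the residual ``one extra marker'' already overshoots it. The probabilistic sketch (weighting $v$ by $1/|C(v)|$) points in the wrong direction: small $|C(v)|$ is precisely what you want to avoid.

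The paper's argument is organised the other way round, and the missing idea is the use of \emph{final occurrences}. First $M_n$ is built greedily to secure the marker property: pick any length-$3n$ factor $w_1$, let $m_1$ be its middle length-$n$ block; while some length-$3n$ factor avoids all $m_j$ chosen so far, take such a $w_{i+1}$ and add its middle block $m_{i+1}$. The size bound then comes from an injection $M_n\times\{0,\dots,n-1\}\hookrightarrow \Fac(x)\cap\A^{4n}$: for each $w_i$ fix a \emph{final} occurrence in $x$ (one with no virtual square of length $\pi(w_i)$ immediately after it---these exist precisely because $x$ is aperiodic) and slide a length-$4n$ window across it in $n$ positions to produce ``covering factors'' $c(i,j)$. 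Distinct $i$'s give distinct $c(i,j)$ by the greedy rule ($w_i$ avoids the earlier $m_{i'}$, but $m_{i'}$ would sit inside $w_i$ if $c(i,j)=c(i',j')$); distinct shifts $j\ne j'$ give distinct $c(i,j)$ because equality would force a short period continuing past the chosen occurrence of $w_i$, contradicting finality. This manufactures, for each marker, $n$ genuinely distinct length-$4n$ witnesses---exactly the fiber bound you wanted for $g$, but obtained structurally rather than by averaging.
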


\begin{proof} For each $n\geq 1,$ we build recursively (relative to the index $i)$ sets $M_n(i)$ consisting of factors of $x$ of length $n,$ and  $W_n(i)$  consisting factors of $x$ of length $3n.$ In each case $\card(M_n(i))=\card(W_n(i))\leq i.$  The process terminates when each factor of $x$ of length $3n$ contains a factor from $M_n(i)$.  Starting with $M_n(0)$ and $W_n(0)$ both empty, let $w_1$ be the factor of $x$ of length $3n$ beginning in position $n$, and let $m_1$ be the middle block of $w_1$ of length $n$, i.e.,  $w_1=x[n, 4n-1]$  and $m_1=w_1[n,2n-1]=x[2n,3n-1]$.   Then set $W_n(1)=\{w_1\}$ and $M_n(1)=\{m_1\}$.

For the inductive step, fix $i\geq 1$ and suppose we have constructed sets $M_n(i)$ and $W_n(i)$ as required.
Consider the factors of $x$ of length $3n$. If each of them contains a factor from $M_n(i)$, then we are done and we set $M_n=M_n(i)$, $W_n=W_n(i)$.  Otherwise, pick a factor $w_{i+1}$ of $x$ of length $3n$ not containing any element of $M_n(i)$ and set
$W_n(i+1)=W_n(i)\cup \{w_{i+1}\}$ and $M_n(i+1)=M_n(i)\cup \{m_{i+1}\}$ where $m_{i+1}$ is the middle block of $w_{i+1}$ of length $n$.  Note that if $x$ is a one-sided infinite word, then $w_{i+1}=x[m,m+3n-1]$ where $m\geq n$.
Since all $w_i$ are distinct and there are a finite number of factors of $x$ of length $3n$, this process terminates at some point $i\geq 1$. Finally, we set $M=\cup_{n\geq 1}M_n$.  It remains to prove the upper bound on the complexity of $M$.

For each element $w_i$ of $W_n$, we consider a final occurrence $w_i=x[k_i, k_i+3n-1]$ of $w_i$ in $x$. Since $x$ is  aperiodic, each factor of $x$ admits at least one final occurrence in $x$.   Now for each $j=0,\ldots,n-1$ consider its {\it covering factor} $c(i,j)=x[k_i+j-n, k_i+3n+j-1]$. Then the length of $c(i,j)$ is $4n$ and $w_i=c(i,j)[n-j, 4n-j-1]$. Note that even if $x$ is one-sided infinite, each $c(i,j)$ is well defined since each $w_i$ occurs in $x$ at a position $n$ or greater.

Now let us prove that if $c(i,j)=c(i',j')$, then $i=i'$ and $j=j'$.
Indeed, suppose that $c(i,j)=c(i',j')$ but $i'< i$. Then $w_i=c(i,j)[n-j, 4n-j-1]$. 
Analogously, $w_{i'}=c(i,j)[n-j', 4n-j'-1]$ and thus $m_{i'}=c(i,j)[2n-j', 3n-j'-1]$. But since $j,j' \in \{0,\ldots,n-1\}$, we have $2n-j'+1 \geq n-j+1$ and $3n-j'\leq 4n-j$. So, $m_{i'}$ is a factor of $w_i$, a contradiction to our definition of $w_i$. We have proved that $i=i'$.

Next suppose that $j'<j$. Then $w_i=c(i,j)[n-j, 4n-j-1]=c(i,j)[n-j', 4n-j'-1]$. Consider the word $s=c(i,j)[n-j, 4n-j'-1]$. It is $(j-j')$-periodic, and in particular, its prefix $w_i$ is $(j-j')$-periodic. So, $\pi (w_i)\leq j-j'\leq n$. The prefix occurrence of $w_i$ to $s$ overlaps with the suffix occurrence of $w_i$ to $s$ by $3n-(j-j')\geq 2n >\pi(w_i)$ symbols, and thus $s$ is also $\pi(w_i)$-periodic. In particular, $s$ has a virtual square of length $\pi(w_i)$ at the end of the prefix occurrence of $w_i$, that is, at the position $3n$. But $s$ is a factor of $c(i,j)=x[k_i+j-n, k_i+3n+j-1]$, namely, $s=c(i,j)[n-j, 4n-j'-1]=x[k_i,k_i+3n+j-j'-1]$. So, $x$ has an occurrence of $w_i$ (of length $3n$) at position $k_i$, followed by a virtual square of length $\pi(w_i)$ at position $k_i+3n$. It means exactly that this occurrence of $w_i$ is not final, a contradiction.

So, $c(i,j)\neq c(i',j')$ for $i \neq i'$ or $j \neq j'$. Thus, the total number of covering factors $c(i,j)$ is given by
\[\card\left(\{c(i,j)\,|\, 1\leq i\leq \card(W_n), \, j=0,\ldots,n-1\}\right)=n\card(W_n) =n\card(M_n).\] 
On the other hand, each covering factor $c(i,j)$ is a factor of $x$ of length $4n$ whence their number is bounded above by $p_x(4n)$. Thus
\[p_M(n)=\card(M_n)\leq \frac{p_x(4n)}{n}\]
as required.\end{proof}



 We now state and prove the most general result of this section.

\begin{theorem}\label{t:main} Assume  either $y \in\A^\ints$, or  $y\in \A^\nats$ and is recurrent. Let $D$ be a positive integer and assume that $M$ is a $D$-marker set for $y$. Then there exist languages $S,T\subseteq \A^*$ such that $\Fac(y)\subseteq ST$ and for each $n\geq 2D$ we have

\begin{equation}\label{mainestimate}
p_{S}(n),p_{T}(n)\leq \sum_{k\in I_n\cap \nats}p_M(2^k)\left(1+\frac{4p_y(3n)}{2^k}\right)
\end{equation}
where $I_n=(\log_2\left(\frac{n}{2D}\right), \log_2(2n)]$.
 \end{theorem}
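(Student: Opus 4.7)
The plan is to construct, for each $u \in \Fac(y)$ with $|u|\geq 2D$, a canonical factorization $u = st$ whose split point is determined by an occurrence of a marker from $M$ of dyadic length $2^k$, with $k = k(u)$ chosen to be the largest integer such that $D\cdot 2^k \leq |u|$. By the $D$-marker hypothesis, $u$ then contains at least one element of $M \cap \A^{2^{k(u)}}$; let $p$ be the position of the leftmost such occurrence, and set $s := u[0,p-1]$ and $t := u[p,|u|-1]$. For $|u| < 2D$ I would adopt a trivial factorization (e.g.\ $s=\varepsilon$, $t=u$), which only affects lengths below the threshold $2D$ and so does not contribute to the bound. Defining $S$ and $T$ as the collections of such left and right parts over all factors $u$ of $y$ then yields $\Fac(y) \subseteq ST$ by construction.

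To bound $p_S(n)$ for $n \geq 2D$, I would classify each $s \in S \cap \A^n$ by the dyadic scale $k$ of the marker that produced its split. Combining the constraints $|s|=n$, $|t|\geq 2^k$ (because $t$ begins with the chosen marker), and $D\cdot 2^k \leq |u|=n+|t|<2D\cdot 2^k$ (from the maximality of $k(u)$) forces $2^k \in (n/2D,\,2n]$, which is precisely the index set $I_n$ appearing in \eqref{mainestimate}. For each admissible $k$, I would parameterize the valid length-$n$ prefixes $s$ at scale $k$ by: (i) the marker $m \in M \cap \A^{2^k}$ placed at the split point, contributing a factor $p_M(2^k)$; and (ii) auxiliary data encoding the position of $s$ in $y$, extracted from a length-$3n$ factor $w$ of $y$ situated around the split. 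A careful injectivity argument, leveraging both the maximality of $k$ and the leftmost-marker convention, should show that two distinct $s$'s sharing the same $(k,m)$ and the same associated window $w$ must have split positions at least $\Omega(2^k)$ apart in $w$; aggregating over all length-$3n$ windows then yields the per-scale bound $p_M(2^k)(1+4p_y(3n)/2^k)$, and summing over $k \in I_n\cap\nats$ gives \eqref{mainestimate}. The bound on $p_T(n)$ follows from the symmetric construction (taking the rightmost marker of the same dyadic length). The recurrence hypothesis in the one-sided case is needed here to guarantee that every factor of $y$ has enough left context to sit inside a length-$3n$ window of $y$, so that the window count is uniform in $n$.

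The main obstacle will be the injectivity step in (ii): I must show that, for fixed $(k,m,w)$, two distinct $s\in S\cap \A^n$ cannot arise from split points closer than $\Omega(2^k)$ within $w$. This should follow by combining the maximality of $k$ (which forbids any marker of dyadic length larger than $2^k$ from appearing in the ambient factor $u$) with the leftmost-marker rule (which forbids an earlier occurrence of a length-$2^k$ marker inside $s$ or straddling its right end). The additive $+1$ in the per-scale bound accounts for the degenerate boundary case in which $s$ itself is too short for a second marker to fit. The remaining work---aligning the constant in the $4/2^k$ factor, the precise endpoints of $I_n$, the treatment of the $|u|<2D$ factors, and the uniform handling of the bi-infinite and recurrent one-sided cases---will be routine bookkeeping once this injectivity lemma is in place.
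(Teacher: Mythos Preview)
Your proposal has two genuine gaps, both stemming from the choice of cutting rule.

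First, cutting at the \emph{start} of the leftmost marker destroys the symmetry you need for the range $2^k\in(n/2D,2n]$. With $s=u[0,p-1]$ and $t=u[p,|u|-1]$, the word $t$ begins with the marker, so $|t|\ge 2^k$ gives $2^k\le 2n$ when $n=|t|$; but for $n=|s|$ there is no such lower bound on $|s|$ in terms of $2^k$ (indeed $s$ can be empty), so no upper bound on $2^k$ follows and the sum over $k$ is not confined to $I_n$. Appealing to a ``symmetric construction (rightmost marker)'' for the other side does not help: two different splitting rules give two different factorizations, and you cannot mix the $S$ from one with the $T$ from the other and still have $\Fac(y)\subseteq ST$. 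The paper avoids this by cutting in the \emph{middle} of the marker, so both $|s|,|t|\ge 2^{k-1}$ and the same range of $k$ applies to each.

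Second, and more seriously, the leftmost rule does not deliver the claimed $\Omega(2^k)$ spacing between split points within a window. Take $m=0^{2^k}$ (period~$1$): inside a long block of zeros, different factors $u$ have their leftmost copy of $m$ at positions differing by~$1$, producing many distinct $t$'s whose split points in any common window $w$ are adjacent. The maximality of $k$ (which in your scheme depends only on $|u|$, not on the markers actually present) gives no leverage here. Even if the spacing held, your aggregation---``at most $O(n/2^k)$ split points per window, times $p_y(3n)$ windows''---overcounts by a factor of $n$. The paper's mechanism is different: it distinguishes \emph{extremal} from \emph{internal} occurrences of $m$, defined via virtual squares of length $\pi(m)$. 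If all occurrences in $v$ are internal, then $v$ is $\pi(m)$-periodic and the resulting $t$ is uniquely determined by $m$ and $n$ (this is the source of the ``$+1$''). Otherwise one cuts at an extremal occurrence, and the point is that two \emph{initial} occurrences of $m$ at positions $i<i'<i+2^{k-1}$ in a word force the interval $[i,i'+2^k-1]$ to be $\pi(m)$-periodic, contradicting that the second occurrence is initial. This period argument, not a leftmost convention, is what makes the sets $E_{\rm ini}(m,n,t,i)$ pairwise disjoint and yields $2^{k-1}\cdot\#T_{\rm ini}(m,n)\le p_y(n+2^k)\le p_y(3n)$.
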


\begin{proof}
Let us fix 
a $D$-marker set $M$ for $y$. For each $k\geq 1$, let $M_k=\{\mathfrak m\in M\,|\, |\mathfrak m|=2^k\}$. The elements of $M_k$ are called markers of order $k$.

Consider a factor $v$ of $y$ with  $|v|\geq 2 D$. We shall define a rule for decomposing $v$ as a product
$v=s(v)t(v)$. The sets $S$ and $T$ will then be defined as the collection of all $s(v)$ and all $t(v)$ corresponding to all factors $v$ of $y$ of length $|v|\geq 2 D$. 
 Let $k\geq 1$ be the largest positive integer such that  $\Fac(v)\cap M_k\neq \emptyset$, and fix $\mathfrak m\in \Fac(v)\cap M_k$. Thus $\mathfrak m$ is a marker word contained in $v$ of length $|\mathfrak m|=2^k$.
 First suppose  some occurrence of $\mathfrak m$ in $v$ is extremal. In this case, we arbitrarily pick one such occurrence, say at position $j$, and cut $v$ precisely in the middle of this extremal occurrence of $\mathfrak m$  so that
 $s(v)=v[0,j+2^{k-1}-1]$ and $t(v)=v[j+2^{k-1}, |v|-1]$. In case all occurrences of $\mathfrak m$ in $v$ are internal, then again arbitrarily pick one such internal occurrence,  say at position $j$, and  cut $v$ precisely in the middle of this internal occurrence of $\mathfrak m$  so that
 $s(v)=v[0,j+2^{k-1}-1]$ and $t(v)=v[j+2^{k-1}, |v|-1]$ (see Fig.~\ref{f1}). Note that our cutting rule gives preference to extremal occurrences of the marker word.
\begin{figure}
\centering \includegraphics[width=0.6\textwidth]{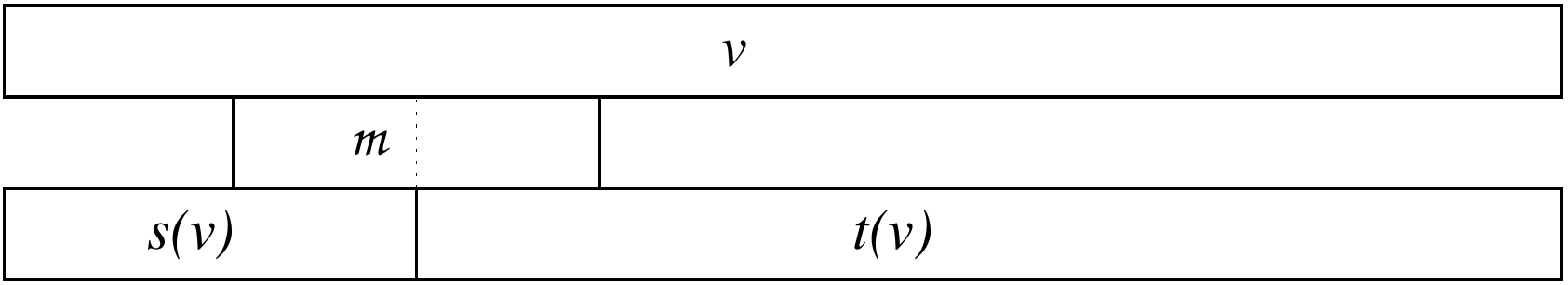}
\caption{Building elements of $S$ and $T$ from a word $v$ and an occurrence of a marker to it}\label{f1}
\end{figure}
 Now   set
\begin{align*}
S&=(\Fac(y) \cap \A^{< 2D}) \cup \{s(v)\,|\,v \in \Fac(y) \cap \A^{\geq 2D}\}, \\
T&=\{\varepsilon\} \cup \{t(v)\,|\,v \in \Fac(y) \cap \A^{\geq 2D}\},
\end{align*}
where  $\A^{< n}=\bigcup_{k=0}^{n-1} \A^k$ and $\A^{\geq n}=\A^* \backslash \A^{<n}$.

It follows immediately from the definitions that $\Fac(y)\subseteq ST$.  It remains to show that complexities of $S$ and $T$ satisfy \eqref{mainestimate}.  We prove this only for $T$ as the proof for  $S$ works in very much the same way.

Fix $n\geq 2 D$, and let us estimate  $p_T(n)$.  Recall that each $u\in T\cap \A^n$  is obtained by cutting  some factor $v$ of $y$ in the middle of an occurrence of some marker $\mathfrak m$ of maximal order $k$ occurring in $v$
and $u=t(v)$ is the resulting suffix of $v$.  
Then since $t(v)$ begins with the suffix of $\mathfrak m$ of length $|\mathfrak m|/2$, we have $n\geq 2^{k-1}$.
On the other hand, since $k$ was chosen to be maximal, we have $n<D 2^{k+1}$ for otherwise $v$, which is of length at least $n$, would contain a marker of order $k+1$.
These inequalities combined give
\begin{equation}\label{e:kl}
 \frac{n}{2D}<2^k\leq 2n,
\end{equation}
which implies that $k$ lies in the  interval $I_n=(\log_2\left(\frac{n}{2D}\right), \log_2(2n)]$.  For each such integer $k\in I_n$, the number of marker words of length $2^k$ is equal to $p_M(2^k)$.


We next prove that each marker word $\mathfrak m$ of length $2^k$ with $k,n$ satisfying \eqref{e:kl} contributes
at most $1+\frac{4p_y(3n)}{2^k}$ elements to $T\cap \A^n$.
Let $T(\mathfrak m,n)$ be the set of all $u\in T\cap \A^n$ with $u=t(v)$ for some factor $v$ of $y$ cut at an occurrence of the marker $\mathfrak m$ in $v$.
 We consider separately the three possible types of occurrences of $\mathfrak m:$ internal, initial and final. Thus let
 $T_{\rm int}(\mathfrak m,n)$ (resp., $T_{\rm ini}(\mathfrak m,n)$ and $T_{\rm fin}(\mathfrak m,n))$
 be the subset of $T(\mathfrak m,n)$ arising from internal (resp., initial and final) occurrences of $\mathfrak m$.  Recall that if $t\in  T_{\rm int}(\mathfrak m,n)$, then $t=t(v)$ for some factor $v$ of $y$ in which every occurrence of $\mathfrak m$ in $v$ is internal.
 This implies that $v$ is $\pi(\mathfrak m)$-periodic and hence $t$ is uniquely determined by $\mathfrak m$ and $|t|=n$. More precisely, $t$ is the word of length $n$ occurring at position $2^{k-1}$ of the periodic word $p^\omega$, where $p$ is the prefix of $\mathfrak m$ of length $\pi(\mathfrak m)$ (see Fig.\ref{f:int}).
Thus $\card(T_{\rm int}(\mathfrak m,n))=1$.
\begin{center}
\begin{figure}
\centering \includegraphics[width=0.6\textwidth]{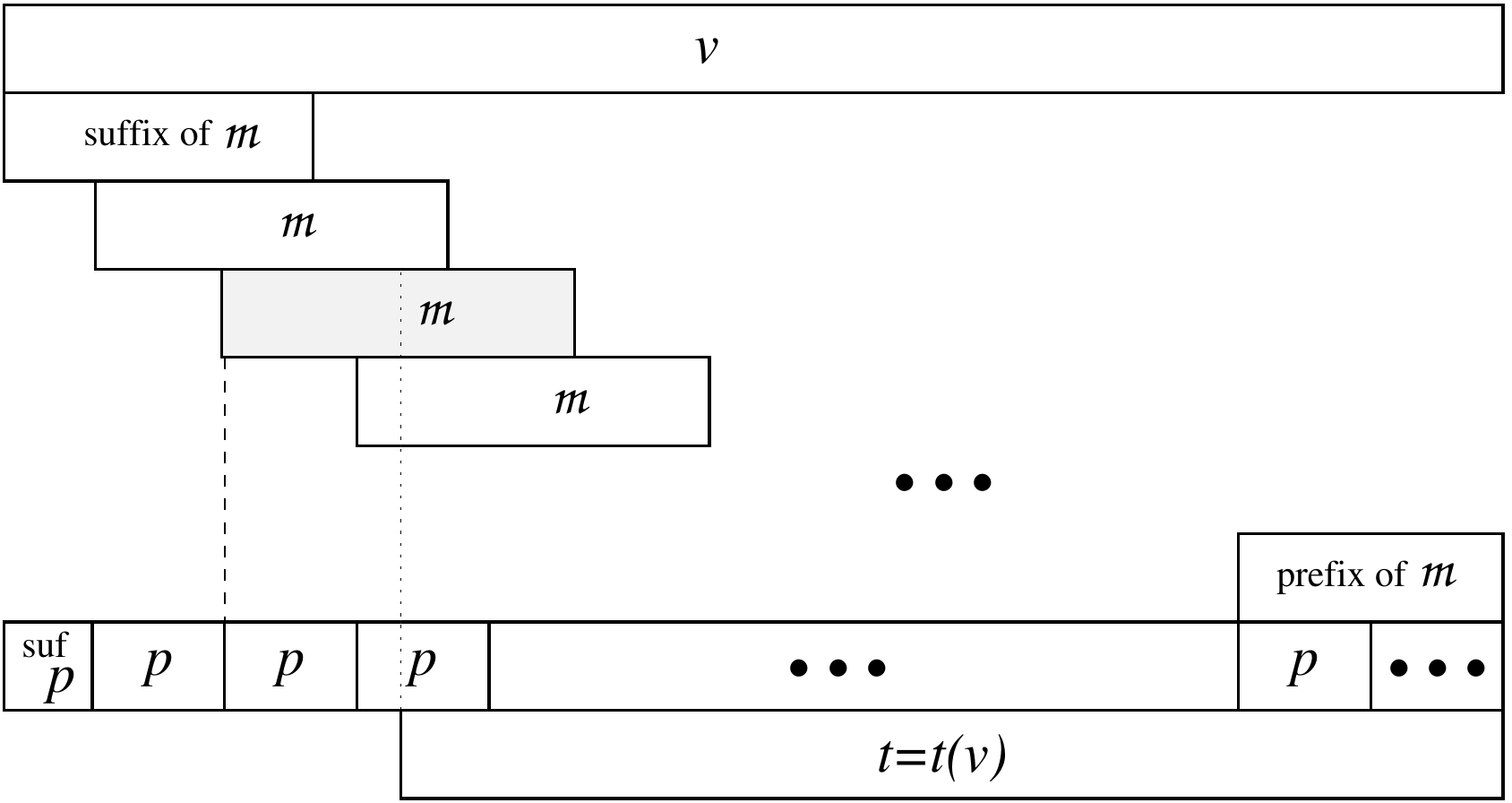}
\caption{The case of an internal occurrence, the unique $t$ is determined by $\mathfrak m$ and the length}\label{f:int}
\end{figure}
\end{center}
\noindent Next we estimate $\card(T_{\rm ini}(\mathfrak m,n))$.

\begin{lemma} For each $n\geq 2D$ we have \[\card(T_{\rm ini}(\mathfrak m,n))\leq \frac{2p_y(3n)}{2^k}\]
\end{lemma}

\begin{proof} 
For $t\in T_{\rm ini}(\mathfrak m,n)$, and each $0\leq i< 2^{k-1}$,    let $E_{\rm ini}(\mathfrak m, n, t, i)$ be the collection of all factors $w$ of $y$ of length $n+2^k$ such that $w$ has an initial occurrence of $\mathfrak m$ at position $i$ and an occurrence of  $t$ in position $i+2^{k-1}$ (see Fig.~\ref{f:ini}).
\begin{center}
\begin{figure}
\centering \includegraphics[width=0.6\textwidth]{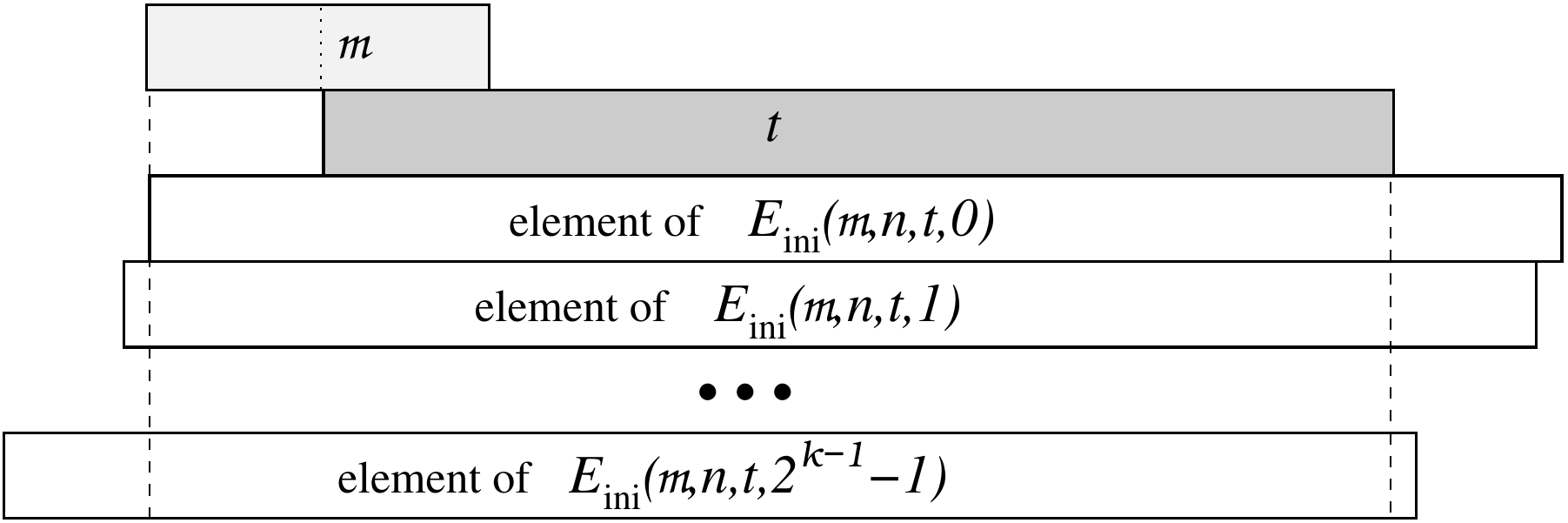}
\caption{The sets $E_{\rm ini}(\mathfrak m, n, t, i)$. The parts between dashed lines are common for all elements}\label{f:ini}
\end{figure}
\end{center}
Let $v$ be a factor of $y$ giving rise to $t$ in $T_{\rm ini}(\mathfrak m,n)$, that is, $v$ contains an initial occurrence of $\mathfrak m$, and the suffix of $v$ starting in the middle of that occurrence of $\mathfrak m$ is $t$.
Since $y$ is assumed either recurrent or bi-infinite, there exists an occurrence of $v$ at the distance more than $i$ from the beginning of the word $y$. So, $E_{\rm ini}(\mathfrak m, n,t,  i)$ is non-empty. Then:

\begin{claim}\label{disjoint}For each $t,t'\in T_{\rm ini}(\mathfrak m,n)$ and $0\leq i,i'<2^{k-1}$, where $t \neq t'$ or $i<i'$, we have \[E_{\rm ini}(\mathfrak m, n,t, i)\cap E_{\rm ini}(\mathfrak m, n,t', i')=\emptyset.\]
\end{claim}

\begin{proof}[Proof of Claim~\ref{disjoint}] Suppose $w\in E_{\rm ini}(\mathfrak m, n,t,  i)\cap E_{\rm ini}(\mathfrak m, n,t', i')$. First consider the case of $0\leq i<i'<2^{k-1}$. Then $\mathfrak m$ occurs in $w$ in position $i$ and $i'$, and since $i'-i<2^{k-1}<|\mathfrak m|$, it follows that the two occurrences of $\mathfrak m$ in $w$ overlap. Since $\mathfrak m$ is $(i'-i)$-periodic, it follows that $\pi(\mathfrak m)\leq i'-i<2^{k-1}<|\mathfrak m|/2$ and hence  $w[i, i'+2^{k}-1]$ is $\pi(\mathfrak m)$-periodic contradicting that the occurrence of $\mathfrak m$ at position $i'$ of $w$ was initial (see Fig.~\ref{f:diff}). So, $i=i'$. But then both $t$ and $t'$ are words of length $n$ occurring in $w$ at position $i+2^{k-1}$, so, $t=t'$. \end{proof}
\begin{center}
\begin{figure}
\centering \includegraphics[width=0.6\textwidth]{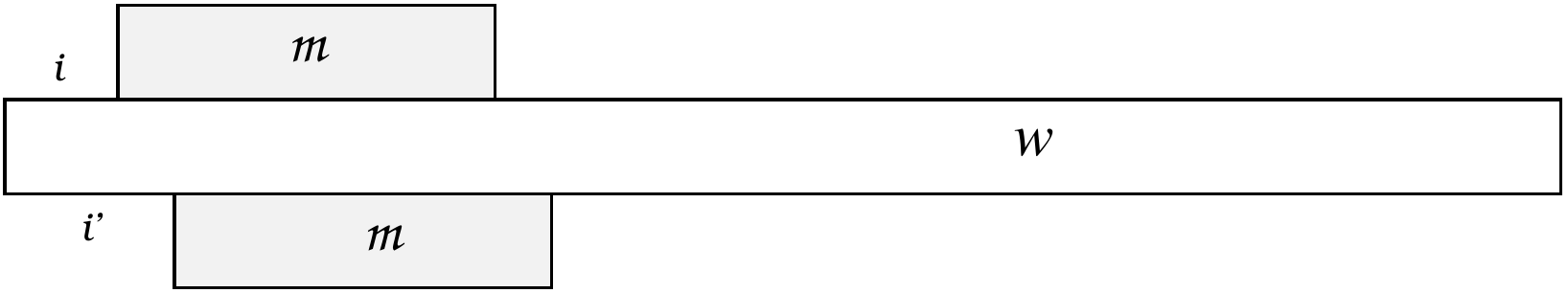}
\caption{Proof of Claim~\ref{disjoint}. The lower occurrence of $\mathfrak m$ is not initial}\label{f:diff}
\end{figure}
\end{center}
So, each $t \in T_{\rm ini}(\mathfrak m,n)$ and each $i\in\{0,\ldots,2^{k-1}-1\}$ correspond to at least one factor of $y$ of length $n+2^k$: the set $E_{\rm ini}(\mathfrak m, n,t, i)$ of all such factors is non-empty, and for different words $t$ or indices $i$, these sets do not intersect. So, 
\[2^{k-1}\card(T_{\rm ini}(\mathfrak m,n))\leq \sum_{i=0}^{2^{k-1}-1}\sum_{t\in T_{\rm ini}(\mathfrak m,n)} \card (E_{\rm ini}(\mathfrak m, n,t, i)) \leq p_y(n+2^k),\]
and since $n+2^k\leq 3n$ and thus $p_y(n+2^k)\leq p_y(3n)$,
\[\card(T_{\rm ini}(\mathfrak m,n))\leq \frac{2p_y(3n)}{2^k}\]
as required.\end{proof}

\noindent A similar argument applies to $T_{\rm fin}(\mathfrak m,n))$ and gives the same bound.
Thus in total each $\mathfrak m$ gives rise to at most  $1+\frac{4p_y(3n)}{2^k}$ elements in $T\cap \A^n$ as required.

The arguments for the complexity of $S$ are analogous, completing the proof of Theorem \ref{t:main}.
\end{proof}

\noindent We recall the following result due to the first author from \cite{cas_lin} (see also \cite{cn}): 
\begin{theorem}\label{cassaigne} Let $C$ be a positive integer. Then for each aperiodic word  $x\in \A^\nats$ with $p_x(n)\leq Cn$ for $n \geq 1$, there exists a constant $K$ (which is a polynomial function in $C)$ such that $\card(\R_x(n))\leq K$ for each $n\geq 0$.
\end{theorem}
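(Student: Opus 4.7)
The plan is to bound $\card(\R_x(n))$ via the first difference $s(n) = p_x(n+1) - p_x(n)$ of the complexity function, and then to show that $s$ is uniformly bounded by a polynomial in $C$.

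First, I would establish the elementary inequality $\card(\R_x(n)) \leq s(n)$. Partitioning factors of length $n+1$ according to their length-$n$ prefix and letting $r(u) = \card\{a \in \A : ua \in \Fac(x)\}$ denote the right-extension count, one has $p_x(n+1) = \sum_{u \in \Fac(x),\, |u|=n} r(u)$, whence
\[s(n) = \sum_{u \in \Fac(x),\, |u|=n}(r(u)-1) = \sum_{u \in \R_x(n)}(r(u)-1) \geq \card(\R_x(n)),\]
using that every factor of the aperiodic word $x$ admits at least one right extension, while $u \in \R_x(n)$ precisely when $r(u) \geq 2$. It therefore suffices to bound $s(n)$ by a constant $K = K(C)$ depending polynomially on $C$.

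Next, I would invoke the classical second-difference identity
\[s(n+1) - s(n) = \sum_{u \in \mathcal{B}_x(n)} m(u),\]
where $\mathcal{B}_x(n)$ is the set of bispecial factors of $x$ of length $n$ (simultaneously left and right special) and the bilateral multiplicity $m(u) = \card\{(a,c) \in \A^2 : auc \in \Fac(x)\} - \ell(u) - r(u) + 1$ classifies bispecials as strong ($m(u)>0$), neutral ($m(u)=0$), or weak ($m(u)<0$), with $\ell(u)$ the analogous left-extension count. Telescoping yields $s(n) = s(0) + \sum_{k<n}\sum_{u \in \mathcal{B}_x(k)} m(u)$. The hypothesis $p_x(n) \leq Cn$ combined with $\sum_{k<n} s(k) = p_x(n) - 1$ already bounds the \emph{average} value of $s$ by $C$; to upgrade this to a uniform pointwise bound I would analyze, for each bispecial $u$, its extension graph -- the bipartite graph with vertex classes $\{a : au \in \Fac(x)\}$ and $\{c : uc \in \Fac(x)\}$ and edges given by admissible biextensions $auc \in \Fac(x)$. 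A standard argument then shows that $|m(u)|$ is controlled by the number of connected components of this graph minus one, so that the signed partial sums of $m$ over bispecial factors of length at most $n$ stay bounded by a polynomial in $C$.

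The main obstacle is this final step: controlling the \emph{signed} partial sums of the multiplicities $m(u)$ requires bounding positive (strong) and negative (weak) contributions separately, not just in net. This is the technical heart of Cassaigne's argument in \cite{cas_lin}, exploiting the fact that a linear complexity bound $p_x(n) \leq Cn$ severely restricts the Rauzy graph structure and prevents unbounded accumulation of branching at any single length. Once $s$ is bounded by a polynomial $K = K(C)$, the reduction in the first step delivers $\card(\R_x(n)) \leq K$ for every $n \geq 0$, as required.
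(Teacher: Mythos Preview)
The paper does not prove this theorem; it is quoted verbatim as a result of the first author from \cite{cas_lin} and used as a black box in the proof of Theorem~\ref{c:main}. There is thus no proof in the present paper to compare your attempt against.

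Your outline is in fact the strategy of the original source \cite{cas_lin}: first reduce to bounding the first difference $s(n)=p_x(n+1)-p_x(n)$ via the elementary inequality $\card(\R_x(n))\leq s(n)$, and then study the second difference $s(n+1)-s(n)$ through bispecial factors and their bilateral multiplicities $m(u)$. That reduction and the second-difference identity are correct and standard.

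That said, what you have written is not a proof but a roadmap that explicitly defers the decisive step back to \cite{cas_lin}. Two remarks. First, the sentence ``$|m(u)|$ is controlled by the number of connected components of this graph minus one'' is not accurate: with $e$ edges, $v=\ell(u)+r(u)$ vertices and $c$ components one has $m(u)=e-v+1=(e-v+c)-(c-1)$, so the component count yields only the one-sided bound $m(u)\geq 1-c$, not a bound on $|m(u)|$. Second, and more importantly, knowing that the partial sums $\sum_{k<n} s(k)=p_x(n)-1\leq Cn-1$ bounds the \emph{average} of $s$ is far from a pointwise bound; the genuine work in \cite{cas_lin} is a combinatorial argument showing that large values of $s$ cannot occur under the linear hypothesis, and this is precisely the part you leave as a citation. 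So your proposal correctly identifies the architecture of Cassaigne's proof but does not carry it out.
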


\noindent We next establish the following classification of words of linear complexity:

\begin{theorem}\label{c:main}Let $x\in \A^\nats$. Then $d_0(x)=2$ if and only if $p_x(n)=\Theta(n)$.  In particular, each $x\in \W(1)$ has cost equal to $0$.
\end{theorem}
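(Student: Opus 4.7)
The plan is to combine Theorem~\ref{t:main} with the uniform bound on right special factors from Theorem~\ref{cassaigne}: the substantive direction is $p_x(n)=\Theta(n)\Rightarrow d_0(x)=2$, while the converse follows immediately from Proposition~\ref{l1prelim}, Lemma~\ref{up}, and the Morse--Hedlund theorem.

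For the forward direction, I would assume $p_x(n)\leq Cn$. Since Theorem~\ref{t:main} requires a bi-infinite or a recurrent one-sided word, I first pass to an aperiodic word $y$ with $\Fac(x)\subseteq \Fac(y)$ and $p_y(n)=\Theta(n)$: if $x$ is recurrent, take $y=x$; otherwise extend $x$ to $y\in\A^\ints$ by setting $y_n=x_0$ for $n<0$, which adds at most $n+1$ new factors of each length $n$ and leaves $y$ aperiodic (its right tail is aperiodic). Lemma~\ref{rmarkers} then shows $M=\R_y$ is a $(C+1)$-marker set for $y$, and Theorem~\ref{cassaigne} yields a uniform bound $p_M(n)\leq K$. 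Feeding $D=C+1$ and this $M$ into Theorem~\ref{t:main} produces $S,T$ with $\Fac(y)\subseteq ST$ and
\[p_S(n),\,p_T(n)\leq \sum_{k\in I_n\cap\nats}K\Bigl(1+\tfrac{4\,p_y(3n)}{2^k}\Bigr).\]
The interval $I_n=(\log_2(n/(2D)),\log_2(2n)]$ contains at most $\log_2(4D)+1$ integers, and the lower bound $2^k>n/(2D)$ on $I_n$ makes the geometric sum $\sum_{k\in I_n}n/2^k$ bounded by a constant depending only on $D$. Since $p_y(3n)=O(n)$, the right-hand side is $O(1)$, so $S,T\in\L(0)$ and $\Fac(x)\subseteq \Fac(y)\subseteq ST$ yields $d_0(x)\leq 2$. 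Because $p_x(n)\geq n+1$ forces $x$ aperiodic, Lemma~\ref{up} rules out $d_0(x)=1$, so $d_0(x)=2$.

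For the converse, $d_0(x)=2$ forces $p_x(n)=O(n)$ by Proposition~\ref{l1prelim}, while Lemma~\ref{up} together with Morse--Hedlund gives $p_x(n)\geq n+1$; the two inequalities combine to $p_x(n)=\Theta(n)$. The ``in particular'' clause follows by case analysis on $x\in\W(1)$: either $x$ is ultimately periodic and $d_0(x)=1$ by Lemma~\ref{up}, hence $c(x)=0$, or $p_x(n)=\Theta(n)$ and the main equivalence gives $d_0(x)=2$ and $c(x)=0$. The main subtlety of the whole argument is the reduction to a word satisfying the hypotheses of Theorem~\ref{t:main} in the non-recurrent case, together with verifying that Theorem~\ref{cassaigne} still applies to the bi-infinite extension; once this is arranged, the bound on $p_S$ and $p_T$ reduces to the routine geometric-sum estimate above.
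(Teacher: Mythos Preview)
Your proposal follows essentially the same strategy as the paper: extend $x$ to a word $y$ satisfying the hypotheses of Theorem~\ref{t:main}, take $M=\R_y$ as the marker set via Lemma~\ref{rmarkers}, invoke Theorem~\ref{cassaigne} to bound $p_M$, and then observe that the sum in \eqref{mainestimate} is $O(1)$ because $I_n$ has bounded length and $p_y(3n)/2^k=O(1)$ on $I_n$. The converse direction and the ``in particular'' clause are handled exactly as in the paper.

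The one substantive difference is your choice of extension in the non-recurrent case: you set $y_n=x_0$ for $n<0$, whereas the paper introduces a \emph{fresh} symbol $a\notin\A$ and sets $y_n=a$ for $n<0$. This matters precisely at the point you flag as ``the main subtlety'': Theorem~\ref{cassaigne} is stated only for one-sided words $x\in\A^\nats$, so you cannot apply it directly to your bi-infinite $y$, and with your extension the relation between $\R_y(n)$ and $\R_x(n)$ is not transparent (boundary factors of the form $x_0^k\cdot(\text{prefix of }x)$ may coincide with factors of $x$ and acquire new right extensions). The paper's fresh-symbol trick sidesteps this entirely: since $a\notin\A$, one has $\R_y(n)=\R_x(n)\cup\{a^n\}$ on the nose, and Theorem~\ref{cassaigne} is then applied to $x$ itself, giving $p_M(n)\leq K+1$. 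Your argument can be repaired (e.g.\ by applying Cassaigne's bound to the one-sided words $x_0^N x$ and letting $N\to\infty$, or by noting that Cassaigne's proof really bounds $p(n+1)-p(n)$ for any factorial language of linear complexity), but as written this step is asserted rather than justified. A minor bookkeeping point: after adding up to $n+1$ new factors you have $p_y(n)\leq (C+2)n$, so Lemma~\ref{rmarkers} yields a $(C+3)$-marker set rather than a $(C+1)$-marker set; this is harmless for the conclusion.
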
 
\begin{proof} One direction follows immediately from Proposition~\ref{l1prelim}. In fact, if $d_0(x)=2$, then applying Proposition ~\ref{l1prelim} with $\alpha=0$ and $k=2$ we deduce that $x\in \W(1)$, i.e., $p_x(n)=O(n)$.
On the other hand, by Lemma~\ref{up} we also have that $x$ is aperiodic, and thus by Morse-Hedlund, $p_x(n)\geq n+1$ for each $n$. Hence, $p_x(n)=\Theta(n)$ as required.

For the converse, suppose $x\in \A^\nats$ and $p_x(n)=\Theta(n)$. Then $x$ is aperiodic for otherwise $p_x(n)=O(1)$.
Since there $x$ is not assumed to be recurrent, to apply Theorem~\ref{t:main} we  will need to replace $x$ by a bi-infinite word. Thus, let $a$ be a  symbol not belonging to $\A$ and define the bi-infinite word $y=\cdots y_{-2}y_{-1}y_0y_1y_2\cdots \in (\A\cup\{a\})^\ints $ by $y_n=x_n$ for $n\geq 0$ and $y_{n}=a$ for each $n\leq -1$. Note that since  $p_y(n)=p_x(n)+n$ and $p_x(n)=\Theta(n)$, it follows that $p_y(n)=\Theta(n)$. Also, since $x$ is aperiodic, then so is $y$. We now apply Theorem~\ref{t:main} to show that there exist languages $S$ and $T$ of bounded complexity such that $\Fac(y)\subseteq ST$.

Fix a positive integer $C$ such that $p_y(n)\leq Cn$ for each $n\geq 1$. Let $M=\R_y$.
By Lemma~\ref{rmarkers}, $M$ is a $D$-marker set for $y$ where $D=C+1$.
By Theorem~\ref{t:main} there exist languages $S$ and $T$ with $p_{S}$,$p_{T}$ satisfying \eqref{mainestimate}
where  $M=\R_y$ and $D=C+1$.

Since $\R_y(n)=\R_x(n) \cup \{a^n\}$ for each $n\geq 0$,
by Theorem~\ref{cassaigne} there exists a positive integer $R$ such that
 $p_M(n)\leq R$ for each $n\geq 0$. Moreover $|I_n|=2+\log_2D$ and thus $k$ takes on at most $3+\log_2D$ possible values.  Furthermore for each such $k$, we have $\frac{1}{2^k}<\frac{2D}{n}$. Thus starting with \eqref{mainestimate}
we have
\begin{eqnarray*}p_{S}(n),p_{T}(n)&\leq & \sum_{k\in I_n \cap \mathbb N}p_M(2^k)\left(1+\frac{4p_y(3n)}{2^k}\right) \\
&\leq &
R(3+\log_2D)\left( 1 + \frac{8Dp_y(3n)}{n}\right)\\
&\leq &R(3+\log_2D)\left( 1 + \frac{24DCn}{n}\right)\\
&=&R(3+\log_2D)\left( 1 + 24DC\right)
\end{eqnarray*}
for each $n \geq 2D$, and hence each of $S$ and $T$ is of bounded complexity. Since $\Fac(x)\subseteq \Fac(y)\subseteq ST$, it follows that $d_0(x)\leq 2$.
But since $x$ is aperiodic, Lemma~\ref{up} implies $d_0(x)\geq 2$.  Hence $d_0(x)=2$ as required.\end{proof}


\begin{remark}
The general Theorem \ref{t:main} can also be extended to non-recurrent one-sided infinite words by the same extension argument as Theorem \ref{c:main}.
\end{remark}

\begin{remark}
Since the complexity of a Sturmian word is linear, Theorem \ref{c:main} applies.  However, the general result gives a poorer upper bound on the complexity of $S$ than the one obtained in Example~\ref{e:st}.
\end{remark}

\section{Cost and dimension of words of sub-quadratic complexity}
We begin this section with another corollary of Theorem \ref{t:main} which yields a non-trivial bound on the cost for words of complexity $o(n^2)$ (see Corollary~\ref{c:2a-2}).

\begin{corollary}\label{c:general}Assume either $x\in \A^\ints$ and is aperiodic, or $x\in \A^\nats$ and is both recurrent and aperiodic.
Then there exist languages $S,T\subseteq \A^*$ with
$\Fac(x)\subseteq ST$ and
\[p_{S}(n),p_{T}(n)\leq \frac{12p_x(8n)}{n}+ \frac{192p_x(8n)p_x(3n)}{n^2}\]
for each $n\geq 6$.
\end{corollary}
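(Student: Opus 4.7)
The plan is to combine the two main tools assembled in this section: Proposition~\ref{l:4n_over_n} and Theorem~\ref{t:main}. Since $x$ is aperiodic (and either bi-infinite or a recurrent one-sided infinite word), Proposition~\ref{l:4n_over_n} supplies a $3$-marker set $M$ for $x$ satisfying $p_M(n) \leq p_x(4n)/n$ for every $n \geq 1$. Applying Theorem~\ref{t:main} to this $M$ with $D = 3$ then yields languages $S, T$ with $\Fac(x) \subseteq ST$ such that, for each $n \geq 2D = 6$,
\[
p_S(n),\, p_T(n) \leq \sum_{k \in I_n \cap \nats} p_M(2^k)\left(1 + \frac{4\, p_x(3n)}{2^k}\right),
\]
where $I_n = (\log_2(n/6),\, \log_2(2n)]$.

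Next I would substitute the bound on $p_M$. For $k \in I_n$ one has $2^k \leq 2n$, hence $2^{k+2} \leq 8n$, and by monotonicity of the factor complexity
\[
p_M(2^k) \leq \frac{p_x(2^{k+2})}{2^k} \leq \frac{p_x(8n)}{2^k}.
\]
Inserting this into the estimate from Theorem~\ref{t:main} and splitting the sum gives
\[
p_S(n),\, p_T(n) \leq p_x(8n) \sum_{k \in I_n \cap \nats} \frac{1}{2^k}  +  4\,p_x(8n)\,p_x(3n) \sum_{k \in I_n \cap \nats} \frac{1}{4^k}.
\]

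The remaining step is a routine geometric series estimate. Every integer $k$ appearing in $I_n$ satisfies $2^k > n/6$, so letting $k_0$ denote the smallest such integer one has
\[
\sum_{k \geq k_0} \frac{1}{2^k} = \frac{2}{2^{k_0}} < \frac{12}{n}, \qquad \sum_{k \geq k_0} \frac{1}{4^k} = \frac{4}{3 \cdot 4^{k_0}} < \frac{4}{3} \cdot \frac{36}{n^2} = \frac{48}{n^2}.
\]
Multiplying by the respective prefactors yields $12\,p_x(8n)/n$ and $192\,p_x(8n)\,p_x(3n)/n^2$, which is exactly the claimed bound.

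I do not expect any serious obstacle: the corollary is essentially quantitative bookkeeping, since all the combinatorial content is already packaged in Proposition~\ref{l:4n_over_n} and Theorem~\ref{t:main}. The only subtle point is to bound the two sums by their full geometric tails from $k_0$ upward rather than by $|I_n|$ times the worst-case term (which would give worse constants than $12$ and $192$), and to check that the choice $D=3$ aligns the Theorem~\ref{t:main} threshold $n \geq 2D$ with the stated bound $n \geq 6$.
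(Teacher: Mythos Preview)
Your proof is correct and follows essentially the same route as the paper: apply Proposition~\ref{l:4n_over_n} to obtain a $3$-marker set, feed it into Theorem~\ref{t:main} with $D=3$, bound $p_M(2^k)\le p_x(8n)/2^k$, and estimate the resulting geometric sums. The only cosmetic difference is that the paper observes $|I_n\cap\nats|\le 4$ and sums four explicit terms, whereas you take the full geometric tail from $k_0$; both computations land on the same constants $12$ and $192$.
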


\begin{proof}Fix $x\in \A^\nats\cup \A^\ints$. Since $x$ is aperiodic, by Proposition~\ref{l:4n_over_n}, there exists a $3$-marker set  $M$ with $p_M(n)\leq \frac{p_x(4n)}n$. By Theorem~\ref{t:main} there exist languages $S,T\subseteq \A^*$ verifying \eqref{mainestimate}
for $n\geq 6$ where $I_n=(\log_2\left(\frac{n}{6}\right), \log_2(2n)]$. Thus for each $n$, there are at most $4$ possible values for $k$ (say $k_0<k_1<k_2<k_3)$ and each verifies $2^{k_i}>2^i\frac{n}{6}$ or equivalently $\frac{1}{2^{k_i}}<2^{-i}\frac{6}{n}$. For each $0\leq i\leq 3$ we bound the term $p_M(2^{k_i})$  by
\[p_M(2^{k_i})\leq \frac{p_x(4\cdot 2^{k_i})}{2^{k_i}}\leq \frac{p_x(8n)}{2^{k_i}}.\] Thus from \eqref{mainestimate} we have
\begin{eqnarray*}p_{S}(n),p_{T}(n)&\leq & \sum_{k\in I_n \cap \mathbb N}p_M(2^k)\left(1+\frac{4p_x(3n)}{2^k}\right) 
\leq \sum_{i=0}^3 p_M(2^{k_i})\left(1+\frac{4p_x(3n)}{2^{k_i}}\right) \\
&\leq & \sum_{i=0}^3 \frac{p_x(8n)}{2^{k_i}} + \sum_{i=0}^3 \frac{4p_x(8n)p_x(3n)}{2^{2k_i}}
\leq  p_x(8n)\sum_{i=0}^3 \frac{1}{2^{k_i}} + 4p_x(8n)p_x(3n)\sum_{i=0}^3 \frac{1}{2^{2k_i}}\\
&\leq&\frac{6p_x(8n)}{n}\sum_{i=0}^3\frac{1}{2^i} + \frac{144p_x(8n)p_x(3n)}{n^2}\sum_{i=0}^3\frac{1}{2^{2i}}\\
&=& \frac{15}{8}\cdot \frac{6p_x(8n)}{n} + \frac{85}{64}\cdot \frac{144p_x(8n)p_x(3n)}{n^2}
\leq \frac{12p_x(8n)}{n}+ \frac{192p_x(8n)p_x(3n)}{n^2}.
\end{eqnarray*}\end{proof}

\noindent As an immediate consequence we have:
\begin{corollary}\label{c:2a-2} Let $\alpha\geq 1$. Then for each  $x\in \W(\alpha)$ we have $c(x)\leq \min\{\alpha, 2\alpha - 2\}$.
\end{corollary}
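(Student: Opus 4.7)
The plan is to deduce the corollary directly from Corollary~\ref{c:general} together with Lemma~\ref{cdim}, treating $\alpha$ and $2\alpha-2$ as the two competing upper bounds.

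First, the bound $c(x)\leq \alpha$ is immediate: since $x\in \W(\alpha)$ means $\Fac(x)\in \L(\alpha)$, Lemma~\ref{cdim} gives $d_\alpha(x)\leq 1$, whence $c(x)\leq \alpha$. It therefore suffices to prove that $c(x)\leq 2\alpha -2$.

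For this second bound, I would first dispose of the degenerate cases. If $x$ is ultimately periodic, then $c(x)=0$ by Lemma~\ref{up}, and since $\alpha\geq 1$ we have $2\alpha-2\geq 0$, so there is nothing to prove. If $x$ is aperiodic but not recurrent, I would invoke the extension trick used in the proof of Theorem~\ref{c:main}: pick a letter $a\notin \A$ and set $y=\cdots aaa\,x_0x_1x_2\cdots \in (\A\cup\{a\})^\ints$. Then $y$ is bi-infinite and aperiodic with $p_y(n)=p_x(n)+n=O(n^\alpha)$ (using $\alpha\geq 1$), and $\Fac(x)\subseteq \Fac(y)$.

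With $x$ (or its bi-infinite extension $y$) satisfying the hypotheses of Corollary~\ref{c:general}, I can apply that corollary to obtain languages $S,T$ with $\Fac(x)\subseteq ST$ and
\[
p_S(n),\,p_T(n) \leq \frac{12\,p_x(8n)}{n} + \frac{192\,p_x(8n)\,p_x(3n)}{n^2}.
\]
Substituting $p_x(m)=O(m^\alpha)$ into this estimate gives $p_S(n),p_T(n)=O(n^{\alpha-1})+O(n^{2\alpha-2})$. Since $\alpha\geq 1$ implies $2\alpha-2\geq \alpha-1$, the second term dominates, so $S,T\in \L(2\alpha-2)$. Hence $d_{2\alpha-2}(x)\leq 2<+\infty$, which by definition yields $c(x)\leq 2\alpha-2$. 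Combining this with the first inequality gives $c(x)\leq \min\{\alpha,\,2\alpha-2\}$.

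There is no real obstacle here beyond bookkeeping: the entire content of the corollary is already packaged in Corollary~\ref{c:general}, and the only subtle point is ensuring the hypotheses of that corollary are met, which is handled by the bi-infinitisation step. I would take care to note that when $\alpha=1$ this recovers $c(x)=0$ in agreement with Theorem~\ref{c:main}, and that for $\alpha\in (1,2)$ the bound $2\alpha-2<\alpha$ is genuinely informative, while for $\alpha\geq 2$ the bound $\alpha$ is the binding one (so the corollary gives new information precisely in the sub-quadratic regime).
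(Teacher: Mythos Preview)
Your proof is correct and follows essentially the same approach as the paper's own proof: both invoke Lemma~\ref{cdim} for the $\alpha$ bound, dispose of the periodic case, bi-infinitise non-recurrent words as in Theorem~\ref{c:main}, and then read off $p_S(n),p_T(n)=O(n^{2\alpha-2})$ from Corollary~\ref{c:general}. The only cosmetic slip is that your displayed inequality uses $p_x$ even in the bi-infinitised case where it should read $p_y$, but since $p_y(n)=p_x(n)+n=O(n^\alpha)$ this has no effect on the argument.
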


\begin{proof} The result is clear in case $x$ is ultimately periodic since $c(x)=0$. Thus we may assume $x$ is aperiodic.
Clearly since $p_x(n)=O(n^\alpha)$, it follows that $c(x)\leq
\alpha$. If $x$ is recurrent, then  by Corollary~\ref{c:general}
taking $p_x(n)=O(n^\alpha)$, there exists languages $S,T$ such
that $\Fac(x)\subseteq ST$ and $p_{S}(n),p_{T}(n)=O(n^{2\alpha -2})$.
Thus $c(x)\leq 2\alpha -2$. If $x$ is not recurrent, then as in
the proof of Theorem~\ref{c:main}, we may replace $x$ by an
aperiodic  bi-infinite word $y$ with $p_y(n)=p_x(n)+n$. Since
$\alpha \geq 1$, it follows that  $p_y(n)=O(n^\alpha)$ and so we
may apply Corollary~\ref{c:general} to $y$ to deduce the existence
of languages $S,T$ with $\Fac(x)\subseteq \Fac(y) \subseteq ST$
and with $p_{S}(n),p_{T}(n)=O(n^{2\alpha -2})$. Whence again $c(x)\leq
2\alpha -2$.
\end{proof}

\noindent As another consequence of Corollary~\ref{c:general} we
have: 

\begin{corollary}Let $x\in \A^\nats$ be a pure morphic word (see \cite{a_sh}). Then except if the complexity of $x$ is in $\Theta(n^2)$, we have  $d_{\alpha}(x)\leq 2$ for each $\alpha>0$ and hence $c(x)=0$.
\end{corollary}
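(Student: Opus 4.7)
The plan is to invoke Pansiot's classification theorem for pure morphic words, which states that the factor complexity $p_x(n)$ of such a word must lie in exactly one of the following classes: $\Theta(1)$, $\Theta(n)$, $\Theta(n\log\log n)$, $\Theta(n\log n)$, or $\Theta(n^2)$. Once the quadratic case is excluded by hypothesis, we have $p_x(n)=O(n\log n)$, and the result will follow by feeding this bound into Corollary~\ref{c:general}.

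I would dispose of the easy cases first. If $p_x(n)=\Theta(1)$, then $x$ is ultimately periodic and Lemma~\ref{up} gives $d_0(x)=1\leq 2$, so trivially $c(x)=0$. If $p_x(n)=\Theta(n)$, then Theorem~\ref{c:main} gives $d_0(x)=2$, and thus $d_\alpha(x)\leq 2$ for every $\alpha\geq 0$ so $c(x)=0$.

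The substantive case is when $p_x(n)=O(n\log n)$ and $x$ is aperiodic. Assuming first that $x$ is recurrent, Corollary~\ref{c:general} furnishes languages $S,T\subseteq \A^*$ with $\Fac(x)\subseteq ST$ and
\[p_{S}(n),p_{T}(n)\leq \frac{12p_x(8n)}{n}+ \frac{192p_x(8n)p_x(3n)}{n^2}=O(\log n) + O((\log n)^2) = O((\log n)^2).\]
Since $(\log n)^2=O(n^\alpha)$ for every $\alpha>0$, we have $S,T\in \L(\alpha)$ for each $\alpha>0$, hence $d_\alpha(x)\leq 2$. Combined with the fact that $c(x)\leq \alpha$ for all $\alpha>0$, we conclude $c(x)=0$.

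The one obstacle to handle is the possibility that $x$, although a pure morphic word, is not recurrent, in which case Corollary~\ref{c:general} does not apply directly. This is resolved by the same extension trick used in the proof of Theorem~\ref{c:main}: adjoin a symbol $a\notin \A$ and form the bi-infinite word $y\in (\A\cup\{a\})^\ints$ with $y_n=x_n$ for $n\geq 0$ and $y_n=a$ for $n<0$. Then $y$ is aperiodic and $p_y(n)=p_x(n)+n=O(n\log n)$, so Corollary~\ref{c:general} applies to $y$, yielding languages $S,T$ with $\Fac(x)\subseteq \Fac(y)\subseteq ST$ and the same $O((\log n)^2)$ bound on $p_S,p_T$. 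The conclusion $d_\alpha(x)\leq 2$ for all $\alpha>0$, and hence $c(x)=0$, follows as before.
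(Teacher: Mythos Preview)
Your proposal is correct and follows essentially the same route as the paper: invoke Pansiot's classification and then apply Corollary~\ref{c:general} to the non-quadratic cases to obtain $S,T$ of complexity $O((\log n)^2)=O(n^\alpha)$. Your version is in fact more careful than the paper's, since you explicitly dispose of the ultimately periodic case and handle the non-recurrent case via the bi-infinite extension trick, hypotheses that Corollary~\ref{c:general} requires but that the paper's short proof glosses over.
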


\begin{proof}By a celebrated result of Pansiot in \cite{pansiot}, see also \cite{cn}, if $x$ is a pure morphic word, then $p_x(n)=\Theta(c_n)$ where $c_n\in \{1, n, n\log\log n, n\log n, n^2\}$. Applying Corollary~\ref{c:general} to each choice of $c_n$ except $c_n=n^2$, gives $\Fac(x)\subseteq ST$ where $p_{S}(n),p_{T}(n)=O(n^\alpha)$ for each $\alpha >0$.
Whence $d_\alpha(x)\leq 2$ for each $\alpha>0$ and hence $c(x)=0$.
\end{proof}

\noindent We suspect that $c(x)=0$ even 
for fixed points of complexity $O(n^2)$ although we are unable to
prove it. 
We saw that $d_0(x)=1$ if and only if a word $x$ is ultimately periodic, while $d_0(x)=2$ if and only if $p_x(n)=\Theta(n)$.
We now show that Theorem~\ref{c:main} does not extend to infinite words of quadratic complexity by exhibiting an infinite word $u$ of complexity $p_u(n)=\Theta(n^2)$ for which $d_0(u)>3$. But for this same word, we will show that $d_0(u)\leq 6$.

\begin{theorem}\label{babaab} Let $u=\prod_{i=1}^{\infty} ab^i=ababbabbb\cdots$.  Then $p_u(n)=\Theta(n^2)$ and $4\leq d_0(u)\leq 6.$
\end{theorem}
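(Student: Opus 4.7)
The theorem has three claims. The complexity estimate $p_u(n)=\Theta(n^2)$ is due to Pansiot and can be verified by noting that every factor of $u$ with at least two occurrences of $a$ admits the canonical form $w=b^{j_0}\,ab^i\,ab^{i+1}\cdots ab^{i+k-2}\,ab^{j_k}$ subject to $0\le j_0\le i-1$, $i\ge 1$, $k\ge 2$, $0\le j_k\le i+k-1$; the length is quadratic in $k$ and linear in the remaining parameters, and counting admissible quadruples at fixed length yields $\Theta(n^2)$.

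For the upper bound $d_0(u)\le 6$, I would exhibit six languages $S_1,\dots,S_6$, each of bounded complexity, and a decomposition rule $w=s_1 s_2\cdots s_6$ with $s_j\in S_j$ for every factor $w$. Taking $S_1=S_6=\{b^j:j\ge 0\}$ and $S_2=S_5=\{ab^j:j\ge 0\}$ handles the leading $b$-run, the first complete block $ab^i$, the final partial block $ab^{j_k}$, and a possibly empty trailing piece; each of these four sets has exactly one word per length. The challenge is to cover the central staircase $ab^{i+1}ab^{i+2}\cdots ab^{i+k-3}$ by just two bounded-complexity sets $S_3,S_4$. The key identity is $ab^{i+1}ab^{i+2}\cdots ab^{i+k-3}=\sigma^{i}(ab\cdot ab^2\cdots ab^{k-3})$, where $\sigma\colon a\mapsto ab,\, b\mapsto b$. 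Cutting the staircase at an appropriately chosen interior block transforms each half into the image under $\sigma^r$ of a word drawn from a finite family $F$ of ``reference prefixes and suffixes''. Such substitution-shadow sets $\{\sigma^r(v):r\ge 0,\,v\in F\}$ have bounded complexity since the length of $\sigma^r(v)$ determines $(r,v)$ up to $|F|$ choices. The small-$k$ cases ($k\le 3$) are treated by hand.

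For the lower bound $d_0(u)\ge 4$, I would argue by contradiction: assume $\Fac(u)\subseteq S_1 S_2 S_3$ with each $p_{S_j}\le C$. Since both $p_u(n)$ and $C^3\binom{n+2}{2}$ are $\Theta(n^2)$, the counting argument of Proposition~\ref{l1prelim} is insufficient and a structural obstruction is needed. I would restrict attention to the two-parameter family $w_{i,k}=ab^i\,ab^{i+1}\cdots ab^{i+k-1}$ of complete staircases, pick a length $n$ at which many pairs $(i,k)$ yield factors of length $n$, and examine how the middle piece $s_2$ distinguishes among these factors. The $b$-run lengths inside $s_2$ form an arithmetic progression determined jointly by $i$ and the cut position, and varying $(i,k)$ along a length-preserving curve should force the middle piece to assume $\omega(1)$ values at some single length, contradicting bounded complexity of $S_2$.

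The principal obstacle is constructing $S_3,S_4$ in the upper bound: the substitution-shadow description of the central staircase must survive an explicit cut into two halves, each of which must remain in a bounded-complexity family uniformly in $(i,k)$. The lower bound obstruction is also delicate and hinges on selecting a family of factors whose middle pieces are forced to be numerous at a single length.
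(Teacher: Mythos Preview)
Your proposal has genuine gaps in both the upper and the lower bound.

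\textbf{Upper bound.} Your plan is to peel four pieces from bounded-complexity sets and then cover the remaining central staircase $ab^{i+1}ab^{i+2}\cdots ab^{i+k-3}$ by two sets $S_3,S_4$ of the form $\{\sigma^r(v):r\ge 0,\ v\in F\}$ with $F$ finite. This cannot work. A half of the central staircase is $ab^{p}\cdots ab^{q}=\sigma^{p-1}(ab^1\cdots ab^{q-p+1})$, so the reference word has parameter $q-p+1$, which is unbounded; no finite $F$ suffices. If instead you allow $F$ to be the (infinite, complexity-one) set of reference staircases $\{ab^1\cdots ab^m:m\ge 1\}$, then the length of $\sigma^r(ab^1\cdots ab^m)$ equals $m(m+3+2r)/2$, and for a given length $L$ the number of pairs $(m,r)$ is governed by the divisors of $2L$, which is unbounded. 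Either way the complexity of your $S_3,S_4$ is not $O(1)$.

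The paper does not even attempt to put the middle into $\L(0)^2$ directly. It factors each $v=b^i\,ab^k\cdots ab^l a\,b^{i'}$ as $b^i\cdot w_{j-k,k}\cdot b^jw_{l-j,j+1}\cdot b^{i'}$, where $j\in[k,l]$ is the unique index of maximal $2$-adic valuation $\nu_2(j)$. The $2$-adic constraint forces $k>j-2^{\nu_2(j)}$ and $l<j+2^{\nu_2(j)}$, and a careful count shows the two middle sets lie in $\L^*(1)$ (linear \emph{accumulative} complexity), not $\L(0)$. Only then does the general Lemma~\ref{ST} (any $T\in\L^*(1)$ satisfies $T\subseteq S^2$ for some $S\in\L(0)$) convert this into $\L(0)^6$, yielding $d_0(u)\le 6$. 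The $2$-adic cut, not a substitution-shadow, is the decisive idea.

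\textbf{Lower bound.} Your sketch says that varying $(i,k)$ along a length-preserving curve ``should force the middle piece to assume $\omega(1)$ values at some single length''. This is a hope, not an argument, and it is not clear how to realise it: at any fixed length $n$ there are only $\Theta(\sqrt n)$ staircases $w_{i,k}$, and nothing prevents the three cut positions from absorbing that many choices. The paper's argument is a counting argument at a different scale. It shows $d^*_1(u)>3$ (hence $d_0(u)>3$ via Proposition~\ref{t:l1l2}) by considering \emph{types} $(k,l)$ with $l\ge\sqrt n$ and $|w_{k,l}|\le n$; there are $\Theta(n\log n)$ such types. If for a type $(k,l)$ some factor $v=b^iw_{k,l}b^j$ has $|x(v)|_a\le 1$ and $|z(v)|_a\le 1$, then the middle piece $y(v)$ pins down the type up to four possibilities, so at most $O(n)$ types are of this kind. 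For the remaining $\Theta(n\log n)$ types, every factor $v=b^iw_{k,l}b^j$ (there are at least $l^2\ge n$ of them per type) has $|x(v)|_a\ge 2$ or $|z(v)|_a\ge 2$, hence is determined by its length together with $x(v)$ or $z(v)$; this caps their total count at $O(n^2)$, contradicting the lower bound $\Omega(n^2\log n)$. The logarithmic gain from the type count is the mechanism your sketch is missing.
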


\begin{proof} We begin by observing that the factor complexity of $u$ is  quadratic: $u$ is the second shift of the fixed point beginning in $c$ of the (non-primitive) morphism $a \mapsto ab, b \mapsto b, c \mapsto ca$, considered by Pansiot in \cite{pansiot} (see Theorem~4.1 and Example~1 therein). To show that
$d_0(u)>3$, we actually prove something stronger:

\begin{lemma}\label{w3}  $d^*_1(u)>3$.
\end{lemma}

\begin{proof}
Suppose to the contrary that $d^*_1(u)\leq 3$. Then there exist languages $X,Y,Z\subseteq \{a,b\}^*$
with $p^*_X(n),p^*_Y(n),p^*_Z(n)= O(n)$ and such that $\Fac(u)\subseteq XYZ$.
Thus each factor $v$ of $u$ admits a factorization $v=x(v)y(v)z(v)$ with $x(v)\in X,\,y(v)\in Y$ and $z(v)\in Z$.

For each $k,l\geq 1$ set $w_{k,l}=ab^la b^{l+1} \cdots a b^{l+k-1} a$. Then each $w_{k,l}$ is a factor of $u$ of length
\begin{equation}\label{lengthwkl}
|w_{k,l}|=k\left(l+\frac{k+1}{2}\right)+1.
\end{equation}

\begin{claim}\label{nlog} Let\[E(n)=\{(k,l)\,|\, |w_{k,l}|\leq n,\, k\geq 3,\, l \geq \sqrt{n}\}.\]
Then $\card( E (n)) = \Theta(n \log n)$.
\end{claim}

\begin{proof}[Proof of Claim~\ref{nlog}] Using \eqref{lengthwkl}, we see that the condition $|w_{k,l}|\leq n$ is equivalent to
\[l\leq \frac{n-1}{k}-\frac{k+1}{2}.\]
Thus,
\[\card(E(n))= \sum_{k=3}^{\infty} \card\left( \left \{ l \in \mathbb N: \sqrt{n} \leq l \leq \frac{n-1}{k}-\frac{k+1}{2}\right \}\right). \]
All but finite number of terms of this sum are null. In particular, they are null for $k\geq \sqrt{n}$: in that case,
\[ \frac{n-1}{k}-\frac{k+1}{2} \leq \frac{n}{\sqrt{n}}-\frac{\sqrt{n}+1}{2}<\sqrt{n}.\]
A term number $k$ of the sum is bounded from above by $\frac{n-1}{k}$ and from below by $\frac{n-1}{k}-\frac{k+1}{2}-\sqrt{n}-1$ (this expression can be negative, so the $k$th term is not always equal to it). So, 

\[\card( E (n))\leq \sum_{k=3}^{\lfloor \sqrt{n} \rfloor}\frac{n-1}{k}=\Theta\left ( n \log n\right )\mbox{~and}\]

\[\card( E (n))\geq \sum_{k=3}^{\lfloor \sqrt{n} \rfloor}
\left (\frac{n-1}{k}-\frac{k+1}{2}-\sqrt{n}-1\right )=\Theta\left ( n \log n\right ).\]
\end{proof}

\medskip
We say that a factor $v$ of $u$ is {\it of type $(k,l)$} if $v=b^i w_{k,l} b^j$ for some $i,j\geq 0$. Clearly, each factor $v$ of $u$ is either of type $(k,l)$ or contains at most one occurrence of the symbol $a$.

\begin{claim}\label{onlyn} Denote by $F(n)$ the subset of $E(n)$ of pairs $(k,l)$ for which there exists a factor $v$ of $u$ of type $(k,l)$ with $|v|\leq n$  whose decomposition $v=x(v)y(v)z(v)$ satisfies $|x(v)|_a \leq 1$ and $|z(v)|_a \leq 1$. Set $H(n)=E(n)\setminus F(n)$.
Then $\card(H(n))=\Theta(n\log n)$.
\end{claim}

\begin{proof}[Proof of Claim~\ref{onlyn}] Consider the mapping $\varphi_n: F(n)\rightarrow Y$ defined as follows:
For each $(k,l)\in F(n)$,  there exists a factor  $v$ of $u$ of type $(k,l)$ with $|v|\leq n$, $|x(v)|_a \leq 1$ and $|z(v)|_a \leq 1$. Set $\varphi_n((k,l))=y(v)\in Y$.
 Since  $|v|_a=|w_{k,l}|_a=k+1\geq 4$, we have that  $|y(v)|_a\geq k-1\geq 2$. It follows therefore that $y(v)$ is either of type  $(k,l)$, or of type
$(k-1,l+1)$, or of type  $(k-1,l)$, or of type $(k-2,l+1)$.
This implies that for each $y\in Y$ in the image of $\varphi_n$, there are at most four pairs $(k,l)\in F(n)$ which map to $y$.  But by assumption the total number of words in $Y$ of length at most $n$ is $p^*_Y(n)=O(n)$. Thus $\card( F(n))\leq 4p^*_Y(n)=O(n)$. On the other hand by Claim~\ref{nlog}, we have $\card(E(n))=\Theta(n\log n)$. Thus $\card(H(n))=\Theta(n\log n)$. \end{proof}

The next claim gives the asymptotic growth of the number of such factors $v$ of $u$ of type $(k,l)\in H(n)$.

\begin{claim}\label{sn} Let $s(n)$ denote the number of distinct factors $v$ of $u$ of length $|v|\leq n$ whose type belongs to $H(n)$. Then $s(n)=\Omega(n^2\log n)$.
\end{claim}

\begin{proof}[Proof of Claim~\ref{sn}] In view of Claim~\ref{onlyn}, it suffices to show that for each type $(k,l)\in H(n)$ there are at least $n$ factors $v$ of $u$ of length $|v|\leq n$ and of type $(k,l)$. So fix a type $(k,l)\in H(n)$.
Then $v$ is of type $(k,l)$ if and only if $v=b^iw_{k,l}b^j=b^iab^la b^{l+1} \cdots a b^{l+k-1} ab^j$ where
$0\leq i\leq l-1$ and $0\leq j\leq l+k$. Thus there are at least $l$ choices for each of $i$ and $j$. But since $l\geq \sqrt{n}$, we have at least $n$ choices for such $v$.
\end{proof}

Let $v$ be a factor of $u$ of length $|v|\leq n$ whose type belongs to $H(n)$. Then
by definition of $H(n)$,  writing $v=x(v)y(v)z(v)$ we have either $|x(v)|_a\geq 2$ or $|z(v)|_a\geq 2$. In case $|x(v)|_a\geq 2$, then $v$ is uniquely determined by its length and  $x(v)$. Thus the number of such words is bounded above by $np^*_X(n)=O(n^2)$. Similarly, if $|z(v)|_a\geq 2$, then $v$ is uniquely determined by its length and  $z(v)$, and hence the number of such words is also bounded above by $np^*_Z(n)=O(n^2)$. Thus $s(n)=O(n^2)$ in contradiction with Claim~\ref{sn}.
This completes our proof of Lemma~\ref{w3}.\end{proof}

Having established that $d^*_1(u)>3$ it follows from  Proposition~\ref{t:l1l2}   that $d_0(u)>3$ as required. 

\noindent We next show that $d_0(u)\leq 6$.

\begin{proposition}\label{w6} Let $u=\prod_{i=1}^{\infty} ab^i$. Then there exist languages $S_1,S_2,S_3$ and $S_4$ with $S_1, S_4 \in \L(0)$ and $S_2,S_3 \in \L^*(1)$ such that $\Fac(u)\subseteq S_1S_2S_3S_4$.
\end{proposition}

\noindent Combined with Lemma~\ref{ST} and Lemma~\ref{w3}, Proposition \ref{w6} yields:

\begin{corollary} $d^*_1(u)=4$ and $d_0(u)\leq 6$. \end{corollary}

\begin{proof}[Proof of Proposition~\ref{w6}] Given a positive integer $n$, let $\nu_2(n)$ denote the $2$-adic valuation of $n$ defined as  the largest exponent $r$ such that $2^r$ divides $n$. Given positive integers $k\leq l$, there exists a unique
$k\leq j\leq l$ such that $\nu_2(j)\geq \nu_2(i)$ for each $k\leq i\leq l$.

Every factor $v$ of $u$ containing at least two occurrences of the letter $a$ is necessarily of the form
$b^{i} a b^{k} a b^{k+1}
a \cdots  b^{l} a b^{i'}=b^i w_{l-k+1,k} b^{i'}$
for some $1\leq k\leq l$, $0\leq i\leq k-1$ and $0\leq i'\leq l+1$.
Given such a $v$ we factor it as follows:

\[\underbrace{b^{i}} \underbrace{ a b^{k} a b^{k+1}
a \dots a b^{j-1}a}\underbrace{b^j a \dots  a b^{l} a}
\underbrace{ b^{i'}}=\underbrace{b^{i}}\underbrace{w_{j-k,k}}\underbrace{b^j w_{l-j,j+1}}\underbrace{b^{i'}}\]
where $j$ is the unique number between $k$ and
$l$ of maximal $2$-adic valuation. Here by convention $w_{0,k}=a$ for all $k$. Writing $j=2^r(2m+1)$, where $r=\nu_2(j)\geq 0$ and $m \geq 0$, we have  $k>j-2^r=2^{r+1}m$ and $l<j+2^r=2^{r+1}(m+1)$. Thus
\[\Fac(u)\subseteq S_1S_2S_3S_4,\]
where $S_1=S_4=\{b^n\,|\,n\geq 0\}$, and
{\small
\begin{eqnarray*}
S_2&=&\{\varepsilon,a\}\cup \{ab^k a \cdots a b^{2^r(2m+1)-1}a\,|\,r\geq 0,\, m \geq 0,\, 2^r(2m+1)-1\geq k>2^{r+1}m\},\\
S_3&=&\{\varepsilon\}\cup \{b^{2^r(2m+1)}a\cdots ab^l a\,|\,r\geq 0,\, m\geq 0,\,2^r(2m+1)\leq l<2^{r+1}(m+1)  \}.
\end{eqnarray*}
}
Note that by adding $\varepsilon$ to both $S_2$ and $S_3$ allows us to also decompose  factors of $u$ containing fewer than two occurrences of the letter $a$. So for instance, $b^iab^{i'}$ factors as $b^iab^{i'}=b^i\cdot a\cdot \varepsilon \cdot b^{i'}$ and $b^i$ as $b^i=b^i\cdot \varepsilon\cdot \varepsilon\cdot \varepsilon$.
Also note that  $ab^k a \cdots a b^{j-1}a\in S_2$ if and only if $\nu_2(j)= \max \{\nu_2(i)\,|\, k\leq i\leq j\}$, and similarly $b^{j}a\cdots ab^l a\in S_3$ if and only if $\nu_2(j)=\max \{\nu_2(i)\,|\, j\leq i\leq l\}$.

Clearly $p_{S_1}(n)=p_{S_4}(n)=1$ for each $n\geq 0$, whence $S_1,S_4\in \L(0)$. Thus it remains to show that $S_2$ and $S_3$ are each in $\L^*(1)$, i.e., each has linear accumulative complexity.

\begin{claim}\label{bds} Let $s$ be a positive integer. Then for each fixed $r\geq0$ and $m\geq 0$,
\[\card(\{v=ab^k a \cdots a b^{2^r(2m+1)-1}a\in S_2\,|\,2\leq|v|\leq 2^s+1\})\leq \min\left\{ 2^r, \frac{2^s+1}{2^{r+1}m+1}\right\},\] 
\[\card(\{v=b^{2^r(2m+1)}a\cdots ab^l a\in S_3\,|\,2\leq |v|\leq 2^s+1\})\leq \min\left\{ 2^r, \frac{2^s+1}{2^{r}(2m+1)+1}\right\}.\]
 \end{claim}

\begin{proof}[Proof of Claim~\ref{bds}] From the definition of $S_2$, if $v=ab^k a \cdots a b^{2^r(2m+1)-1}a\in S_2\backslash \{\varepsilon,a\}$, then $k$ ranges between $2^{r+1}m+1$ and $2^r(2m+1)-1$.  Thus the number of such $v$ is bounded above by $2^r(2m+1)-1-(2^{r+1}m+1)+1=2^r-1<2^r$. Similarly, if $v=b^{2^r(2m+1)}a\cdots ab^l a\in S_3\backslash \{\varepsilon,a\}$,  then $l$ ranges between $2^r(2m+1)$ and $2^{r+1}(m+1)-1$, thus the number of such $v$ is bounded above by
$2^{r+1}(m+1)-1-2^r(2m+1)+1=2^{r+1}-2^r=2^r$. The second estimate in each case takes into account the restriction on $|v|$ and is obtained by replacing the elements in each set by their lengths. In the case of $S_2$, we are estimating the cardinality of a set of natural numbers whose biggest element is at most $2^s+1$, smallest element is $2^r(2m+1)+1$, and the smallest difference between two elements is $2^{r+1}m+2$ (corresponding to the smallest allowable value of $k)$. Thus the cardinality of the set is bounded above by $\frac{(2^s+1)-(2^r(2m+1)+1)}{2^{r+1}m+2}+1<\frac{2^s+1}{2^{r+1}m+1}$. A similar argument yields the second estimate in the case of $S_3$. \end{proof}

\begin{claim}\label{S2} Let $s$ be a positive integer. Then $p^*_{S_2}(2^s+1)\leq 2 + 2^s(3 + \sqrt 2)$.
\end{claim}

\begin{proof}[Proof of Claim~\ref{S2}] Let $s$ be a positive integer. Let $v\in S_2$ with $|v|\leq 2^s+1$. Then either
$v=\varepsilon$ or $v=a$, or $v=ab^k a \cdots a b^{2^r(2m+1)-1}a$ in which case in particular  $2^r(2m+1)+1 \leq 2^s+1$.
This implies that $0\leq r\leq s$ and $m< 2^{s-r-1}$. Thus either $0\leq r<s$ and $m<2^{s-r-1}$, or $s=r$ and $m=0$.
 In the latter case, $v=ab^{2^s-1}a$ and hence this case contributes just one element to $p^*_{S_2}(2^s+1)$. Thus, adding $v=\varepsilon$ and $v=a$, we obtain the estimate
 \[p^*_{S_2}(2^s+1)\leq 3 +\card(\{v=ab^k a \cdots a b^{2^r(2m+1)-1}a\,|\, |v|\leq 2^s+1, 0\leq r<s \,\,\mbox{and}\,\,\, m<2^{s-r-1}\}).\]

\noindent Applying Claim~\ref{bds} for the number of words $v\in S_2$ of the form $v=ab^k a \cdots a b^{2^r(2m+1)-1}a$ for each parameter value $(r,m)$ yields
\begin{equation}\label{e:s2}
 p^*_{S_2}(2^s+1)\leq 3+\sum_{r=0}^{s-1}\sum_{m=0}^{2^{s-r-1}} \min\left\{ 2^r, \frac{2^s+1}{2^{r+1}m+1}\right\}.
\end{equation}

We extract for each value of $r$ the term corresponding to $m=0$. Since $\min\{2^r,2^s+1\}=2^r$, the contribution to $p^*_{S_2}(2^s+1)$ of all pairs $(r,0)$ is bounded by $\sum_{r=0}^{s-1}2^r=2^s-1$. Hence

\[ p^*_{S_2}(2^s+1)\leq 2+2^s+\sum_{r=0}^{s-1}\sum_{m=1}^{2^{s-r-1}} \min\left\{ 2^r, \frac{2^s+1}{2^{r+1}m+1}\right\}\]
Since $m<2^{s-r-1}$, we have $2^{r+1}m<2^s$ and hence $\frac{2^s+1}{2^{r+1}m+1}<\frac{2^s}{2^{r+1}m}$. Moreover since
for all positive $x,y$ we have $\min(x,y)\leq \sqrt{xy}$, we obtain

\begin{eqnarray*}
p^*_{S_2}(2^s+1)&\leq& 2+2^s+\sum_{r=0}^{s-1}\sum_{m=1}^{2^{s-r-1}} \min\left\{ 2^r, \frac{2^s}{2^{r+1}m}\right\}\\
&\leq&
2+2^s+\sum_{r=0}^{s-1}\sum_{m=1}^{2^{s-r-1}} 2^{\frac{s-1}{2}}\frac{1}{\sqrt{m}}\\
&=&2+2^s+2^{\frac{s-1}{2}}\sum_{r=0}^{s-1}\sum_{m=1}^{2^{s-r-1}}\frac{1}{\sqrt{m}}.
\end{eqnarray*}
Since
\[\sum_{m=1}^{2^{s-r-1}}\frac{1}{\sqrt{m}}\leq \int_{0}^{2^{s-r-1}}\frac{dx}{\sqrt{x}}=2\sqrt{2^{s-r-1}}=2^{\frac{s-r+1}{2}}\]
we obtain
\[p^*_{S_2}(2^s+1)\leq 2+2^s\left(1+\sum_{r=0}^{s-1}2^{-r/2}\right)\leq 2+2^s\left(1+\sum_{r=0}^{\infty}\left(\frac{1}{\sqrt{2}}\right)^r\right)=2+2^s(3+\sqrt{2})\]
as required.\end{proof}

\begin{claim}\label{forn} For each positive integer $n$ we have $p^*_{S_2}(n)\leq 2+n(6+2\sqrt{2})$.
\end{claim}

 \begin{proof}[Proof of Claim~\ref{forn}] For $n<1$, the bound is obvious. Fix a positive integer $n\geq 2$ and pick $s\geq 1$ such that $2^{s-1}<n\leq 2^s$, so that $2^s<2n$.
Using  Claim~\ref{S2} together with the fact that  $p^*_{S_2}$ is a non-decreasing function, we obtain
\[p^*_{S_2}(n)\leq p^*_{S_2}(2^s+1)\leq 2+2^s(3+\sqrt{2})\leq 2+2n(3+\sqrt{2})\leq 2+n(6+2\sqrt{2})\]
as required.  \end{proof}

\noindent It remains to find a linear bound for $p^*_{S_3}(n)$.

\begin{claim}\label{S3} Let $s$ be a positive integer. Then $p^*_{S_3}(2^s+1)\leq 2 + 2^s(3 + \sqrt 2)$. And hence as in Claim~\ref{forn} we have $p^*_{S_3}(n)\leq 2+n(6+2\sqrt{2})$.
\end{claim}

\begin{proof}[Proof of Claim~\ref{S3}] The proof for $S_3$ is analogous to that of $S_2$. Fix a positive integer $s$.
Let $v\in S_3$ with $|v|\leq 2^s+1$. Then either
$v=\varepsilon$ or $v=b^{2^r(2m+1)}a\cdots ab^l a$ in which case  $2^r(2m+1)+1\leq2^s+1$.
As before this implies either $0\leq r<s$ and $m<2^{s-r-1}$, or $s=r$ and $m=0$.
 In the latter case, $v=b^{2^s}a$ and hence this case contributes just one element to $p^*_{S_2}(2^s+1)$. Thus, combined with $v=\varepsilon$, we obtain the estimate
 \[p^*_{S_3}(2^s+1)\leq 2 +\card(\{v=b^{2^r(2m+1)}a\cdots ab^l a\,|\, |v|\leq 2^s+1, 0\leq r<s \,\,\mbox{and}\,\,\, m<2^{s-r-1}\}).\]
Applying Claim~\ref{bds} for the number of words corresponding to each parameter value $(r,m)$ gives
\begin{equation}\label{newbound}p^*_{S_3}(2^s+1)\leq 2+\sum_{r=0}^{s-1}\sum_{m=0}^{2^{s-r-1}} \min\left\{ 2^r, \frac{2^s+1}{2^r(2m+1)+1}\right\}.\end{equation}
The claim now follows by observing that the righthand side of \eqref{newbound} is less than the righthand side of \eqref{e:s2}.
\end{proof}
\noindent Claim~\ref{S3} completes the proof of Proposition~\ref{w6}.
\end{proof}

\noindent This concludes our proof of Theorem~\ref{babaab}.\end{proof}

\section{Positive cost for greater than quadratic complexity }
At the moment, we do not know if the cost of a word of quadratic complexity can be greater than 0. However, the next theorem states that for any growth of complexity function which is faster than $Cn^2$, this is possible.
\begin{theorem}\label{n2fn}
Let $f(n)$ be any non-decreasing integer function satisfying $f(1)= 1$, $f(n)\leq n$ and $\lim_{n\rightarrow \infty}f(n)= +\infty$. Then there exists an infinite word $x\in \{a,b\}^\nats$ of complexity $O(n^2 f(n))$ such that if $\Fac(x)\subseteq S^k$ for some $S\subseteq \{a,b\}^*$ and $1\leq k<+\infty$, then 

\[p^*_S(n)=\Omega(\sum_{p=1}^\frac{n-2}{2(2k-1)}f(p)).\]

\end{theorem}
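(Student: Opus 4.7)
The plan is to construct $x \in \{a,b\}^\nats$ as a generalised Pansiot word of the form
$$x = \prod_{i=1}^\infty a b^{G(i)},$$
where $G : \nats \to \nats$ is a strictly increasing sequence depending on $f$. With $f \equiv 1$ this essentially reduces to the Pansiot word treated in Theorem~\ref{babaab}, for which Lemma~\ref{w3} already yields the required $\Omega(n)$ lower bound on $p^*_S$. The function $G$ will be tuned so that on the one hand $p_x(n) = O(n^2 f(n))$, and on the other hand factors of $x$ of length at most $n$ realise sufficiently many distinct ``types'' $(K, L)$, in the sense introduced in the proof of Lemma~\ref{w3}.

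For the complexity upper bound, every factor of $x$ containing at least two occurrences of $a$ has the form $v = b^i\, w_{K,L}\, b^{i'}$ with $w_{K,L} = a b^{G(L)} a b^{G(L+1)} \cdots a b^{G(L+K-1)} a$, so one bounds $p_x(n)$ by summing, over admissible tuples $(K,L,i,i')$ with $|v| \leq n$, the contributions of each type. With $G$ chosen so that the partial sums $G(L) + \cdots + G(L+K-1)$ reflect the growth dictated by $f$, this sum evaluates to $O(n^2 f(n))$, generalising the quadratic bound for Pansiot's word.

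For the lower bound, assume $\Fac(x) \subseteq S^k$. Set $N = \lfloor (n-2)/(2(2k-1)) \rfloor$ and let $\mathcal{F}_N$ denote the family of factors of $x$ of type $(K, L)$ with $L \leq N$ and $1 \leq K \leq f(L)$; the construction of $G$ guarantees that all such factors have length at most $n$ and that $|\mathcal{F}_N| \geq c \sum_{p=1}^{N} f(p)$ for some absolute constant $c > 0$. Any $v \in \mathcal{F}_N$ factorises as $v = s_1 \cdots s_k$ with $s_j \in S$. Since $v$ contains $K+1$ occurrences of $a$, the pigeonhole principle yields some $s_j$ containing at least $\lceil (K+1)/k \rceil$ of them, so this $s_j$ carries a subword of the form $a b^{G(L+r)} a b^{G(L+r+1)} \cdots a b^{G(L+r+K')} a$ with $K' \geq (K-1)/k - 1$. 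Such an $s_j$ determines the pair $(K, L)$ up to an $O(k)$ ambiguity coming from the choice of cut index $j$ and the positions of the cuts inside the $b$-blocks at the two ends of the subword. This produces an $O(k)$-to-one map from $\mathcal{F}_N$ into the set of elements of $S$ of length at most $n$, so $p^*_S(n) = \Omega\bigl(\sum_{p=1}^{N} f(p)\bigr)$ as required.

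The main obstacle will be choosing $G$ so that the two estimates above match simultaneously: $G$ must grow rapidly enough to force $|\mathcal{F}_N|$ to be comparable to $\sum_{p \leq N} f(p)$, yet slowly enough to keep $p_x(n)$ within $O(n^2 f(n))$. A secondary technical point is the control of cuts falling inside $b$-blocks, which is precisely where the denominator $2(2k-1)$ comes from: the factor $2k-1$ accounts for the $k$ pieces together with the $k-1$ internal cuts, and the factor $2$ reflects the possibility that a cut lands near the middle of a $b$-block whose size is comparable to that of the neighbouring pieces.
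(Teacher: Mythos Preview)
Your construction $x = \prod_i ab^{G(i)}$ with $G$ strictly increasing cannot work. The length claim already fails: since $G$ takes distinct positive integer values, $G(L) \geq L$, hence $|w_{K,L}| \geq K\,G(L) \geq KL$; taking $L = N \sim n/(4k-2)$ and $K = f(N)$ (which is allowed in your family $\mathcal{F}_N$) gives $|w_{K,L}| \geq N f(N) > n$ as soon as $f(N) > 4k-2$, and since $f\to\infty$ this happens for all large $n$. In fact the total number of types $(K,L)$ with $|w_{K,L}| \leq n$ is at most $\sum_{G(L)\leq n} n/G(L) = O(n\log n)$ for \emph{any} strictly increasing $G$, far short of $\sum_{p\leq N} f(p)$ when $f$ grows like a power; correspondingly one checks $p_x(n) = O(n^2)$ for every such $G$, so your word never actually attains complexity $\Theta(n^2 f(n))$ for unbounded $f$. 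Independently, the ``$O(k)$-to-one'' step is wrong: a piece $s_j$ with $m$ occurrences of $a$ only forces $(K+1)/k \leq m$, i.e.\ $K \leq mk-1$, and for each such $K$ there are about $K-m+2$ compatible values of $L$, so the fibre has size of order $m^2 k^2$ with $m$ unbounded. The underlying obstruction is that your types $w_{K,L}$ are nested inside one another as factors of $x$, so a single word in $S$ can serve as a piece for many of them.

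The idea your plan is missing is to encode a genuinely two-dimensional parameter in a \emph{short periodic pattern} and then repeat it. The paper takes $x = \prod_p \prod_{q\leq f(p)} (a^p b^q)^{g(p,q)}$ with $g(p,q)$ eventually at least $2k-1$. The factor $b(a^p b^q)^{2k-1}a$ has length $(p+q)(2k-1)+2 \leq n$ whenever $p+q \leq (n-2)/(2k-1)$, which directly yields $\sum_{p\leq N} f(p)$ admissible pairs; and in any $k$-factorisation of it, of the $2k$ occurrences of the pattern $ba$ at most $k-1$ can be cut by the $k-1$ internal boundaries, so some single piece contains two consecutive ones and hence the full block $ba^p b^q a$, which determines $(p,q)$ exactly. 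This gives an honest injection $(p,q)\mapsto s_{p,q}\in S$ and the lower bound follows.
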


\begin{proof}  Fix a function $g:\nats \times \nats \rightarrow \nats$  satisfying  $g(1,1) \geq 1$, $g(p,q)\leq g(p,q+1)$,  $g(p,f(p))\leq g(p+1,1)$ for all $p,q \in \nats$ and $\lim_{p\rightarrow \infty}g(p,1)=+\infty$.  For instance, we can take $g(p,q)=p^{f(p)}+q$.

Define $x\in \{a,b\}^\nats$ as follows:

\[x=\prod_{p=1}^{\infty} \prod_{q=1}^{f(p)}(a^{p}b^{q})^{g(p,q)}.\]

Fix $k\geq 1$, and suppose  $\Fac(x)\subseteq S^k$ for some language $S\subseteq \{a,b\}^*$.

\begin{claim}\label{c:pq}
For every triple of positive integers $n, p,q$ verifying $(p+q)(2k-1)\leq n-2$, $q\leq f(p)$ and $g(p,q)\geq 2k-1$, the set $S$ contains a factor $s_{p,q}$ of  $b (a^p b^q)^{2k-1} a$ of length $|s_{p,q}|\leq n$ containing $ba^pb^qa$ as a factor.
Moreover, $s_{p,q}\neq s_{p',q'}$ whenever $(p,q)\neq (p',q')$.
\end{claim}

\begin{proof}[Proof of Claim~\ref{c:pq}]Since $g(p,q)\geq 2k-1$ and $q\leq f(p)$, the word $b (a^p b^q)^{2k-1} a$ is a factor of $x$.
Moreover since $(p+q)(2k-1)\leq n-2$, we have that $|b (a^p b^q)^{2k-1} a|\leq n$.
Given any factorization $b (a^p b^q)^{2k-1} a=u_1u_2\cdots u_k$ with $u_i\in \{a,b\}^*$, we see that of $2k$ occurrences of $ba$, at most $k-1$ lie accross boundaries of $u_i$. It remains $k+1$ occurrences of $ba$, and so two of them lie in the same $u_j$. This means that  $u_j$ contains $ba^p b^q a$ as a factor and we can take $s_{p,q}=u_j$.  \end{proof}

Let \[P(n)=\{(p,q)\,|\, (p+q)(2k-1)\leq n-2,\,q\leq f(p),\, g(p,q)\geq 2k-1\}.\] By Claim~\ref{c:pq},
there exists an injection
\[P(n) \hookrightarrow \{v\in S\,|\, |v|\leq n\}\]
given by $(p,q)\mapsto s_{p,q}$. We now estimate, for each $n$ sufficiently large,  the cardinality of the set $P(n)$.
Since the function $g(p,q)$ is non-decreasing on $p$ and $q$, and $g(p,1)\to +\infty$, there exists a positive integer $p_0$ such that $g(p,q)\geq 2k-1$ for all $p \geq p_0$ and all $q$. Since $f(p)\leq p$ for all $p$, for any $q\leq f(p)$ we have $p+q \leq p+f(p) \leq 2p$. In other words, any $p$ between $p_0$ and $\frac{n-2}{2(2k-1)}$ satisfies the conditions $(p+q)(2k-1)\leq n-2$ and  $g(p,q)\geq 2k-1$. Since for each such $p$ there are $f(p)$ possible values for the second coordinate $q$,
for all $n$ sufficiently large we have
\[p^*_S(n)\geq \card(P(n))\geq  \sum_{p=p_0}^\frac{n-2}{2(2k-1)} f(p).\]
Whence
\[p^*_S(n)=\Omega(\sum_{p=0}^\frac{n-2}{2(2k-1)}f(p)).\]


It remains to show that  the factor complexity of $x$ is $O(n^2 f(n))$. For this purpose we partition the factors of $x$ into four groups and estimate the number of factors of length $n$ in each group. Each factor $v$ of $x$ belongs to one or more of the following groups:

\begin{itemize}

\item group 1: factors of a block of the form $(a^{p}b^{q})^j$ for some $p$, $q$ and $j$.

\item group 2: factors of a block of the form $(a^{p}b^{q})^{k_1}(a^{p}b^{q+1})^{k_{2}}$.

\item group 3: factors of a block of the form $(a^{p}b^{f(p)})^{k_1}(a^{p+1}b)^{k_{2}}$.

\item group 4: factors containing some complete block  $(a^{p}b^{q})^{g(p,q)}$ as a factor.
\end{itemize}
We note that some of these groups overlap, which is not a
problem since we seek only an upper bound on the factor complexity. We estimate the number of words of length $n$ in each group.

In group 1, we have $O(n)$ words of the form $a^ib^{n-i}$ or $b^ia^{n-i}$, plus $O(n^2)$ words of the form $a^i b^q a^{n-q-i}$ (uniquely determined by $i\geq 1,q<n$) or $b^i a^p b^{n-p-i}$ (uniquely determined by $i\geq 1,p<n$), plus words containing factors of the form $b a^p b^q a$ or $a b^q a^p b$. These last set of words are uniquely determined by $p<n$, $q \leq f(p)$ and the position of the first occurrence of $a^p$, which takes values between 0 and $p+q-1<n$. Thus, the number of such words (and thus of all the words in group 1) is $O(n^2f(n))$.

Words in group 2 which do not belong to group 1 contain factors of the form $a b^q a^p b^{q+1}$. Such a word is uniquely determined by $p<n$, $q \leq f(p)-1$ and the position of the first occurrence of $b^{q+1}$, which takes values between 0 and $n-q-1<n$. Hence the number of such words is also $O(n^2f(n))$.

An analogous counting argument applies to group 3.  Words in group 3 which have not yet been accounted for are uniquely determined by $p<n$ and the first position of
$a^{p+1}$, whence their number is $O(n^2)$.

Finally, for each word $v$ in group 4, we consider the first complete block $u=(a^{p}b^{q})^{g(p,q)}$ contained in $v$.  Then $v$ is uniquely determined by $p$, $q$ and the position of $u$ in $v$, hence the number of such words is again $O(n^2 f(n))$.

Thus, the complexity $p_x(n)=O(n^2f(n))$  as required. This completes the proof of Theorem~\ref{n2fn}. \end{proof}


\begin{corollary}\label{c:w3} For each non-decreasing integer function  $f(n)$ verifying $f(1)= 1$, $f(n)\leq n$ and $\lim_{n\rightarrow \infty}f(n)= +\infty$,  there exists an infinite word $x\in \{a,b\}^\nats$ of complexity $O(n^2 f(n))$ with $d_0(x)=d^*_1(x)=+\infty$.
\end{corollary}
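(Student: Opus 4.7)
The plan is to derive the corollary as a more or less immediate consequence of Theorem~\ref{n2fn}. First I would invoke the theorem to obtain an infinite word $x\in\{a,b\}^\nats$ with $p_x(n)=O(n^2 f(n))$ enjoying the property that any inclusion $\Fac(x)\subseteq S^k$ forces
\[
p_S^*(n)=\Omega\!\left(\sum_{p=1}^{(n-2)/(2(2k-1))} f(p)\right).
\]
The point is that this lower bound is super-linear in $n$, and super-linearity of $p_S^*$ is precisely what prevents $S$ from lying in either $\L(0)$ or $\L^*(1)$.

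The key verification is the super-linearity claim. Since $f$ is non-decreasing and $\lim_{p\to\infty}f(p)=+\infty$, given any constant $C>0$ one can find $p_0$ such that $f(p)\geq C$ whenever $p\geq p_0$. Splitting the sum at $p_0$ yields $\sum_{p=1}^N f(p)\geq C(N-p_0)$ for all sufficiently large $N$, so $N^{-1}\sum_{p=1}^N f(p)\to+\infty$. Substituting $N=(n-2)/(2(2k-1))$, which is linear in $n$ once $k$ is fixed, gives $p_S^*(n)/n\to+\infty$. Hence $p_S^*(n)\neq O(n)$, so $S\notin\L^*(1)$; and since any language of bounded complexity automatically has linear accumulative complexity (i.e.\ $\L(0)\subseteq\L^*(1)$), also $S\notin\L(0)$.

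Since this rules out every finite $k$ simultaneously, neither $d_0(x)$ nor $d_1^*(x)$ can be finite, giving $d_0(x)=d_1^*(x)=+\infty$. There is really no serious obstacle: the whole content of the corollary is already packaged inside Theorem~\ref{n2fn}, and what remains is just the elementary observation that divergence of $f$ is enough to push the partial sums $\sum_{p\leq N}f(p)$ above every linear function of $N$.
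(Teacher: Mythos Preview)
Your argument is correct and follows essentially the same route as the paper: invoke Theorem~\ref{n2fn}, use the divergence of $f$ to show the partial sums $\sum_{p\le N}f(p)$ grow super-linearly, and conclude that no $S$ with $\Fac(x)\subseteq S^k$ can lie in $\L^*(1)$ (hence not in $\L(0)$ either). The only cosmetic difference is that the paper deduces $d_0(x)=+\infty$ from $d^*_1(x)=+\infty$ via the inequality $d^*_{1}(L)\le d_0(L)$ of Proposition~\ref{t:l1l2}, whereas you argue directly via $\L(0)\subseteq\L^*(1)$; these are equivalent.
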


\begin{proof}Let $x$ be as in Theorem~\ref{n2fn}. Due to result of the theorem, if $\Fac(x)\subseteq S^k$ for some language $S$, then $\displaystyle p^*_S(n)=\Omega(\sum_{p=1}^\frac{n-2}{2(2k-1)}f(p))$. Given any positive $M$, we can find $p_0$ such that $f(p_0)>M$; then, since $f(n)$ is non-decreasing,
\[\sum_{p=1}^\frac{n-2}{2(2k-1)}f(p)>\sum_{p=p_0}^\frac{n-2}{2(2k-1)}f(p)\geq M\left(\frac{n-2}{2(2k-1)}-p_0\right)>\frac{M}{4k}n+d\]
for an appropriate constant $d$ not depending on $n$. So, $p^*_S(n)$ grows faster than linearly. This means exactly that $d_1^*(x)=+\infty$; and $d_0(x)=+\infty$ due to Proposition \ref{t:l1l2}.
\end{proof}

\begin{corollary} \label{c:alpha} For each $0<\alpha<1$, there exists an infinite word $x\in \{a,b\}^\nats$ of complexity $O(n^{2 +\alpha})$ such that  $c(x)\geq\alpha$.
\end{corollary}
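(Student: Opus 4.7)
The plan is to apply Theorem~\ref{n2fn} with a carefully chosen function $f$ that gives the prescribed complexity bound on the one hand, and a strong enough lower bound on $p^*_S$ to force $c(x) \geq \alpha$ on the other.

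Concretely, I would take $f(n) = \lfloor n^{\alpha} \rfloor$. For $0 < \alpha < 1$ this function satisfies the hypotheses of Theorem~\ref{n2fn}: it is non-decreasing, $f(1)=1$, $f(n) \leq n$ (since $n^\alpha \leq n$), and $f(n) \to +\infty$. Applying Theorem~\ref{n2fn} yields an infinite word $x \in \{a,b\}^\nats$ of factor complexity $p_x(n) = O(n^2 f(n)) = O(n^{2+\alpha})$ and such that whenever $\Fac(x) \subseteq S^k$ for some $S \subseteq \{a,b\}^*$ and $1 \leq k < +\infty$, we have
\[
p^*_S(n) = \Omega\!\left( \sum_{p=1}^{(n-2)/(2(2k-1))} \lfloor p^{\alpha} \rfloor \right).
\]

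The second step is to estimate this sum. Writing $N = \lfloor (n-2)/(2(2k-1)) \rfloor$, the standard bound $\lfloor p^{\alpha} \rfloor \geq p^{\alpha}-1$ together with $\sum_{p=1}^N p^{\alpha} \geq \int_0^N t^{\alpha}\,dt = N^{\alpha+1}/(\alpha+1)$ gives
\[
\sum_{p=1}^N \lfloor p^{\alpha} \rfloor \;\geq\; \frac{N^{\alpha+1}}{\alpha+1} - N \;=\; \Omega(N^{\alpha+1}) \;=\; \Omega(n^{\alpha+1}),
\]
since $\alpha > 0$, where the implicit constant depends on $k$ but not on $n$. Consequently, any language $S$ realizing an inclusion $\Fac(x) \subseteq S^k$ must satisfy $p^*_S(n) = \Omega(n^{\alpha+1})$.

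The third step converts this into the desired lower bound on $c(x)$. Suppose for contradiction that $c(x) < \alpha$. Then there exists $\beta$ with $c(x) < \beta < \alpha$ such that $d_\beta(x) = k$ is a finite positive integer. By definition, $\Fac(x) \subseteq S^k$ for some language $S$ with $p_S(n) = O(n^{\beta})$, and hence $p^*_S(n) = O(n^{\beta+1})$. But $\beta + 1 < \alpha + 1$, so this contradicts the lower bound $p^*_S(n) = \Omega(n^{\alpha+1})$ established above. Therefore $c(x) \geq \alpha$, completing the proof.

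The argument is essentially a direct application of Theorem~\ref{n2fn}, so there is no genuine obstacle; the only point to take care of is verifying that $f(n) = \lfloor n^\alpha \rfloor$ simultaneously satisfies the hypotheses of the theorem, produces the complexity bound $O(n^{2+\alpha})$, and gives a partial-sum growth of order exactly $n^{\alpha+1}$ which beats every $O(n^{\beta+1})$ for $\beta < \alpha$.
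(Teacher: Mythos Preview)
Your proof is correct and follows the same approach as the paper: both choose $f(n)=\lfloor n^{\alpha}\rfloor$, apply Theorem~\ref{n2fn}, and obtain $p^*_S(n)=\Omega(n^{\alpha+1})$ for any $S$ with $\Fac(x)\subseteq S^k$. The only cosmetic difference is in the last step: the paper phrases the conclusion as $c^*(x)\geq \alpha+1$ and then invokes the established relation between $c$ and $c^*$ (the corollary to Proposition~\ref{t:l1l2}) to get $c(x)\geq\alpha$, whereas you unfold the definitions directly via a contradiction with some $\beta\in(c(x),\alpha)$; these are equivalent.
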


\begin{proof} Fix $0<\alpha<1$.  Then applying Theorem~\ref{n2fn} to $f(n)=\lfloor n^\alpha \rfloor$, we have that there exists a word $x\in \{a,b\}^\nats$ of complexity $O(n^{2 +\alpha})$ such that if $\Fac(x)\subseteq S^k$ for some $S\subseteq \{a,b\}^*$ and $1\leq k<+\infty$, then \[p^*_S(n)=\Omega(n^{\alpha+1}).\]
Thus $c^*(x)\geq \alpha+1$, and hence $c(x)\geq \alpha$. \end{proof}


\section{Non-factorial languages}
Positive results of previous sections concern mostly languages of factors of infinite words. In this section, we show that for a general non-factorial language, low complexity does not imply $d_0(L)<+\infty.$

\begin{theorem}\label{t:nonf}
 There exists a non-factorial language $L$ of complexity $p_L(n)=O(\log n)$ (and hence of cost zero) such that $d_0(L)=+\infty$.
\end{theorem}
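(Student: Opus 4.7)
The plan is to build $L$ by diagonalization, so that for every pair $(k,C)\in\mathbb{N}_{\geq 1}^2$ there is a family of witness words inside $L$ that collectively cannot be covered by $S^k$ for any $S$ with $|S\cap\Sigma^\ell|\leq C$ for all $\ell$. The lengths of these witnesses are spread out sparsely enough to preserve $p_L(n)=O(\log n)$.

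Concretely, enumerate the pairs as $(k_t,C_t)_{t\geq 1}$ and pick integers $N_1\ll N_2\ll\cdots$ with $N_t\geq 2^{C_t+t}$; assign to the $t$-th pair the length-interval $I_t=(N_{t-1},N_t]$, and on each $n\in I_t$ include $\lfloor\log_2 n\rfloor$ distinct binary words of length $n$ in $L$. The witness words themselves are chosen by a union bound. For any fixed $S$ with $|S\cap\Sigma^\ell|\leq C$ one has $|S^k\cap\Sigma^n|\leq C^k n^{k-1}$ (by the same counting argument as in Proposition~\ref{l1prelim}), so a uniformly random length-$n$ word lies in $S^k$ with probability at most $C^k n^{k-1}/2^n$. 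The number of such $S$ supported on lengths $\leq N_t$ is bounded by $2^{C_t N_t^2/2 + O(N_t\log C_t)}$. Taking $\lfloor\log_2 n\rfloor$ independent random witness words per length $n\in I_t$, the probability that some admissible $S$ covers all of them has logarithm at most $C_t N_t^2/2-\Omega(N_t^2\log N_t)$, which is negative once $\log N_t>C_t$; a deterministic choice of witnesses with the desired property therefore exists.

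With this $L$ in hand, one verifies: $L$ is non-factorial, since we may ensure that the single-letter (and more generally the short) factors of the chosen witness words are deliberately omitted from $L$; $p_L(n)=O(\log n)$ by construction, since each length $n$ lies in at most one $I_t$ and we include only $\lfloor\log_2 n\rfloor$ witnesses there; and $d_0(L)=+\infty$, for if $L\subseteq S^k$ for some $S$ with $|S\cap\Sigma^\ell|\leq C$ for all $\ell$, picking $t$ with $(k_t,C_t)=(k,C)$ would force every witness word of $I_t$ to lie in $S^k$, contradicting the defining property of the witness family.

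The main obstacle is the parameter balancing in the union bound: because the per-length witness budget is only $O(\log n)$, the probabilistic argument just barely closes, and one must make $N_t$ grow super-exponentially in $C_t$ for the dominant term $-\Omega(N_t^2\log N_t)$ to overwhelm $C_t N_t^2/2$. The dependence on $k_t$ enters only polylogarithmically through the factor $n^{k_t-1}$ and is absorbed easily; one must also be slightly careful about the combinatorial counting of admissible $S$'s up to length $N_t$, but the crude estimate $\prod_\ell\binom{2^\ell}{C_t}\leq 2^{C_t N_t^2/2+O(N_t\log C_t)}$ is enough.
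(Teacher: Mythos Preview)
Your probabilistic diagonalization is correct and genuinely different from the paper's argument. The paper gives an \emph{explicit} construction over the ternary alphabet $\{0,1,2\}$: it sets $y_n$ to be a long prefix of $([n]_2\,2)^\omega$ of length $\Theta(n/\log n)$, so that $p_L(n)=\Theta(\log n)$, and then argues by pigeonhole: if $L\subseteq S^k$ and $y_n$ contains at least $k+1$ copies of the separator $2$, some piece $s_n\in S$ contains two $2$'s and hence the full block $[n]_2$, so the $s_n$ are pairwise distinct; since $|s_n|\leq|y_n|=o(n)$ this forces $p^*_S$ to grow superlinearly, contradicting bounded complexity. No union bound or counting of languages is needed.

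Your route instead enumerates all targets $(k,C)$ and uses a first-moment argument to plant, on each length, $\lfloor\log_2 n\rfloor$ random binary words that no bounded-complexity $S$ can cover in $k$ pieces. This buys you a binary alphabet and shows the phenomenon is generic rather than relying on a structural trick, at the price of being non-constructive and requiring the parameter bookkeeping you describe. A couple of small points worth tightening: the constant in your threshold $N_t\geq 2^{C_t+t}$ should really be $N_t\geq 2^{cC_t+t}$ for a suitable absolute $c$ (your own $\Omega$-estimate on the negative term only gives $\log N_t>cC_t$), and for very small $n\in I_t$ the per-length factor $C^k n^{k-1}/2^n$ may exceed $1$, so those lengths contribute nothing --- but since $N_t\to\infty$ the tail $n\in(N_t/2,N_t]$ already supplies the $-\Omega(N_t^2\log N_t)$ you need. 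With these cosmetic fixes the argument goes through.
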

\begin{proof}
 For each positive integer $n$, define $x_n \in \{0,1,2\}^*$ by $x_n=[n]_2 2$, where $[n]_2$ is the binary representation of $n$. For example, $x_2=102$ and $x_{65}=10000012$. Clearly, $|x_n|=\lfloor \log_2 n \rfloor+2$. Next define $y_n$ as the longest prefix of $x_n^{\omega}$ satisfying $|y_n|\log_2 |y_n| \leq n$. Thus for example $y_2=10$ since $2 \log_2 2 \leq 2 < 3 \log_2 3$ and $y_{65}=1000001210000012$ since $16 \log_2 16\leq 65 < 17 \log_2 17$. Finally, define $L=\{y_n|n\geq 1\}$.

We first claim that $|y_n|=\Theta(\frac{n}{\log n})$. Indeed, for $n \geq 2$, $|y_n|\geq 2$ so that $\log_2 |y_n|\geq 1$ and 
\begin{equation}\label{no1}|y_n|\leq \frac{n}{\log_2 |y_n|} \leq n.\end{equation} Since the length $|y_n|$ was chosen to be maximal, \begin{equation}\label{no2}|y_n|+1> \frac{n}{ \log_2 (|y_n|+1)} \geq \frac{n}{\log_2(n+1)},\end{equation} so $|y_n|=\Omega(n/\log n)$. Combining the (\ref{no1}) and (\ref{no2})  yields 
\[|y_n|\leq \frac{n}{\log_2(\frac{n}{\log_2(n+1)}-1)}.\] Since $\frac{n}{\log_2(n+1)}-1$ is asymptotically equivalent to $\frac{n}{\log_2 n}$ we deduce $|y_n|=O(\frac{n}{\log n})$. Together with the lower bound above, this gives $|y_n|=\Theta(\frac{n}{\log n})$ as required.

Next we show that $d_0(L)=+\infty$. Indeed, suppose by contrary that $L\subseteq S^k$ for some $k \in \mathbb Z$ and some set $S$ of bounded complexity. Since \[\frac{|y_n|}{|x_n|}=\Theta\left(\frac{n}{(\log_2 n)^2}\right),\] there exists an integer $n_0>0$ such that for all $n>n_0$, we have $|y_n|\geq (k+1)|x_n|$. This means that for all $n>n_0$, the word $y_n$ contains at least $k+1$ occurrences of $2$, and at least two of them are located in the same word from $S$, denote it by $s_n$. Since between two occurrences of $2$ in $s_n$, there is exactly the binary representation of $n$, all $s_n$ for $n \geq n_0$ are pairwise distinct.

Now for each $n \geq n_0$ consider the set $S(n)=\{s_m|n_0<m\leq n\}\subseteq S$. It contains $n-n_0$ distinct words, and the length of each of them is $o(n)$: indeed, $|s_m|\leq |y_m|=\Theta(n /\log n)$. So the accumulative complexity of $S$ grows faster than linearly, which is impossible if its usual complexity is bounded.

It remains to prove that $p_L(n)=\Theta(\log n)$. Indeed, \[p_L(n)=\#\{m:  |y_m|=n\}.\]In other words,
\[p_L(n)=\#\{m:  n\log_2 n \leq m < (n+1)\log_2 (n+1)\}.\] Whence,
\begin{eqnarray*}
p_L(n)&=&\lceil (n+1)\log_2(n+1)\rceil - \lceil n \log_2 n \rceil = \Theta(\log n).
\end{eqnarray*}
This completes the proof of Theorem~\ref{t:nonf}.
\end{proof}

The language $L$ in Theorem~\ref{t:nonf} provides an example of a language of cost equal to $0$ and having infinite cost dimension. We do not know whether there exists an infinite word $x$ with $c(x)=0$ and $d_0(x)=+\infty.$

\end{document}